
\documentclass[a4paper, 10pt,reqno]{amsart}  



\usepackage{amssymb,amsmath,amsthm, mathtools, dsfont}
\usepackage{stmaryrd}
\usepackage{enumitem,color}
\usepackage{hyperref}
\hypersetup{hidelinks}
\usepackage[nameinlink]{cleveref}
\usepackage{nicefrac}
\usepackage{microtype}

\Crefname{theorem}{Theorem}{Theorems}
\Crefname{definition}{Definition}{Definitions}
\Crefname{lemma}{Lemma}{Lemma}
\Crefname{remark}{Remark}{Remarks}
\Crefname{conjecture}{Conjecture}{Conjectures}
\Crefname{proposition}{Proposition}{Proposition}

\newtheorem{theorem}{Theorem}
\newtheorem{definition}[theorem]{Definition}
\newtheorem{lemma}[theorem]{Lemma}
\newtheorem{corollary}[theorem]{Corollary}
\newtheorem{remark}[theorem]{Remark}

\newtheorem{assumption}{Assumption}
\newtheorem{example}[theorem]{Example}
\newtheorem{proposition}[theorem]{Proposition}
\newtheorem*{mainresult}{Main result}

\newcommand{\e}{\mathrm{e}}
\newcommand{\C}{\mathbb{C}}
\newcommand{\N}{\mathbb{N}}
\newcommand{\R}{\mathbb{R}}

\newcommand{\iu}{\mathrm{i}}
\renewcommand{\d}{\, \mathrm{d}}
\newcommand{\Lb}{\mathcal{L}}
\newcommand{\Lp}{\mathrm{L}}

\newcommand{\E}{\mathrm{E}}
\newcommand{\W}{\mathrm{W}}
\newcommand{\set}[2]{\left\{ #1 \: \middle \vert \: #2 \right\}}

\DeclareMathOperator{\real}{\mathrm{Re}}
\DeclareMathOperator{\imag}{\mathrm{Im}}
\DeclareMathOperator{\ran}{\mathrm ran}
\DeclareMathOperator{\sgn}{\mathrm{sgn}}

\title{Characterization of Orlicz admissibility}

\allowdisplaybreaks

\numberwithin{theorem}{section}

\begin{document}

\thanks{This work has been supported by the German Research Foundation (DFG) via the joint grant JA 735/18-1 / SCHW 2022/2-1.\\
The first author would like to thank Bernhard Haak, University of Bordeaux, for valuable discussions on square function estimates during his six-month stay at the University of Bordeaux.}


%
%
%
%
%
%
 
 \author[R.~Hosfeld]{Ren\'e Hosfeld}
\address[RH]{University of Wuppertal, School of Mathematics and Natural Sciences, IMACM, Gau\ss \-str.\ 20, 42119 Wuppertal, Germany and Department of Mathematics, University of Hamburg, Bundesstra\ss e 55, 20146 Hamburg, Germany}
\email{hosfeld@uni-wuppertal.de}
\author[B.~Jacob]{Birgit Jacob}
\address[BJ]{University of Wuppertal, School of Mathematics and Natural Sciences, IMACM, Gau\ss \-str.\ 20, 42119 Wuppertal, Germany}
\email{bjacob@uni-wuppertal.de}
\author[F.L.~Schwenninger]{Felix L.~Schwenninger}
\address[FLS]{ Department of Applied Mathematics, University of Twente, P.O. Box 217, 7500 AE Enschede, The Netherlands and Department of Mathematics, University of Hamburg, Bundesstra\ss e 55, 20146 Hamburg, Germany}
\email{f.l.schwenninger@utwente.nl}


\begin{abstract}
In this note we extend two characterizations of admissible operators with respect to $\Lp^p$ to more general Orlicz spaces. The equivalent conditions are
given by the property that an associated operator generates a strongly continuous semigroup and in terms of a resolvent estimate.
\end{abstract}

\maketitle

\section{Introduction}

In this note we study
 operators $C$ related to linear observation systems
\begin{equation}\label{eq:observationsystem}
\left\{
\begin{array}{lll}
\dot{x}(t) &= Ax(t), \ t \geq 0, \\
x(0) &= x_0,\\
y(t) &= Cx(t),\\
\end{array}\right.
\end{equation}
where $A$ generates a $C_0$-semigroup $(T(t))_{t \geq 0}$ on a Banach space $X$, the state space and $x_0 \in X$ is the initial state. The operator $C$ is the output operator (or ``observation operator'') mapping the domain of $A$ (equipped with the graph norm) continuously into some Banach space $Y$, the output space. Since the (mild) solution $x$ in \eqref{eq:observationsystem} 
is given by the semigroup applied to the initial state, 
$x(\cdot)=T(\cdot)x_0$, 
the output is formally given by
\[y(\cdot) = CT(\cdot)x_0.\]
Admissibility with respect to $\Lp^p$ (or $\Lp^p$-admissibility) means that the mapping $x_0 \mapsto CT(\cdot)x_0$, initially defined only on $D(A)$, has a continuous extension from $X$ to $\Lp^p(0,\tau;Y)$. Here we distinguish between finite-time admissibility (or just admissibility), that is if $\tau <\infty$ and infinite-time admissibility, that is if $\tau=\infty$. In the literature admissibility with respect to various spaces of output functions, different than $\Lp^p$, has been considered, see e.g. \cite{HaakKunst07, JNPS16, Wynn11}. In this introduction we focus on infinite-time admissibility, noting that all mentioned results can be formulated analogously for finite-time admissibility. Indeed, the infinite-time case is more interesting and the finite-time case can be derived by the infinite-time case since $\Lp^p$-admissibility is invariant under scaling of the semigroup and hence under shifting of the generator.
The notion of admissibility is strongly related to the more general concept of well-posed linear systems being of central interest in system theory, see e.g. \cite{TucWei14} for $\Lp^2$.

In this article we study two characterizations of this notion with respect to Orlicz spaces $\Lp_\Phi$ and $\E_\Phi$; an estimate on the resolvent of  the Laplace transformed trajectory of the output $y=CT(\cdot)$ and an equivalence in terms of generation of a strongly continuous semigroup. These results extend earlier results for the special case of $L^p$-admissibility due to Callier--Grabowski \cite{CallGrab96}, Engel \cite{Engel.Char.Adm},  as well as Le Merdy \cite{LeMerdy2003} and Haak \cite{Haakthesis}. 
\begin{mainresult}[c.f.~\Cref{thm:Engel_generalization}]\label{main:Engel}
Let $C:D(A) \rightarrow Y$ be a bounded operator with respect to the graph norm of $A$. Then $C$ is infinite-time $\E_\Phi$-admissible if and only if the block operator matrix
\begin{equation}
\mathcal{A} =
 \begin{pmatrix}
A & 0 \\
0 & -\frac{\d}{\d r}
\end{pmatrix}
\begin{pmatrix}
I & 0 \\
L & I \\
\end{pmatrix}
\end{equation}
with domain
\begin{equation}
D(\mathcal{A}) = \left\{ \begin{pmatrix}
x\\
f\\
\end{pmatrix}
\in D(A) \times \W^1\E_\Phi(0,\infty;Y) \, \middle\vert  \, Cx + f(0) = 0  \right\}
\end{equation}
generates a $C_0$-semigroup on $X \times \E_\Phi(0,\infty;Y)$, where $Lx:= \mathds{1}_{\vert_{[0,\infty)}} Cx$ for $x \in D(A)$.
\end{mainresult}
Here $\W^1\E_\Phi(0,\infty;Y)$ denotes the (standard) Orlicz-Sobolev space consisting of those functions in $\E_\Phi(0,\infty;Y)$ whose weak derivative exists and lies again in $\E_\Phi(0,\infty;Y)$, see e.g.~\cite[Ch.~8]{Adams}.
\bigskip

A second characterization of $\Lp^p$-admissibility due to Le Merdy ($p=2$) and Haak ($p \geq 1)$ relates to a conjecture originally formulated by Weiss in \cite{We90} (for $p=2$ and, equivalently, for the dual problem of control operators). The $p$-Weiss conjecture states that infinite-time $\Lp^p$-admissibility of $C$ is equivalent to the so-called {\em infinite-time $p$-Weiss condition} for $C$, i.e.
\begin{equation}\label{eq:p_Weiss_condition}
\sup_{\real z> 0}(\real z)^{1-\nicefrac{1}{p}} \lVert C(z-A)^{-1} \rVert < \infty,
\end{equation}
a property which is easily seen to follow from $\Lp^p$-admissibility by H\"older inequality.
The question thus is whether the $p$-Weiss condition is sufficient for $\Lp^p$-admissibility of $C$. Whereas the answer is negative in the general Banach space setting \cite{We90} (and $p=2$), the problem has received much attention since then, with both positive results, as well as counterexamples. We mention here some of them and refer to \cite{Haakthesis} and \cite{jpsurvey} for detailed overviews. In \cite{JPP02,JaZw04} it is shown, that the $2$-Weiss conjecture does not hold in arbitrary Hilbert spaces without further assumptions on the semigroup and the operators $C$. For Hilbert spaces the $p=2$-case is known to hold true for exponentially stable, left-invertible semigroups, \cite{We90}, 
 as well as in the case of contraction semigroups and finite-dimensional output spaces, \cite{JaPa01}. For infinite-dimensional output spaces, the statement may fail even for semigroups of isometries \cite{JPP02}. 
Le Merdy showed in \cite{LeMerdy2003} that the $2$-Weiss conjecture holds true in the Hilbert space situation under the assumption of an analytic contractive semigroup. Moreover, he showed for Banach spaces and a bounded analytic semigroup that the $2$-Weiss conjecture holds if and only if the operator $(-A)^{\nicefrac{1}{2}}$ defined via the holomorphic functional calculus is infinite-time $\Lp^2$-admissible.
Haak \cite{Haakthesis} extended Le Merdy's results to more general $p\ge1$  as follows:
If $A$ generates a bounded analytic semigroup and $A$ has dense range, then the $p$-Weiss conjecture holds if and only if $(-A)^{\nicefrac{1}{p}}$ is infinite-time $\Lp^p$-admissible.
He used generalized square function estimates for the operator $A$ which are equivalent to $(-A)^{\nicefrac{1}{p}}$ being infinite-time $\Lp^p$-admissible. 
 
 We continue these developments in the context of Orlicz spaces for a certain class of Young functions, which is called {\it class $\mathcal{P}$}. Our second main result reads:

\begin{mainresult}[c.f.~\Cref{thm:Phi_Weiss_conjecture}]\label{main:Weiss}
Suppose $A$ generates a bounded analytic semigroup, the Young function $\Phi$ is of class $\mathcal{P}$ and either
\begin{enumerate}[label=$\bullet$]
\item $A$ is a multiplication operator with $\sigma(-A) \subseteq [0,\infty)$ or 
\item $\Phi^{-1}$ is holomorphic on some sector containing the spectrum of $-A$ and there exist constants $m_0,m_1>0$ such that for $z$ in this sector it holds that 
\[m_0 \Phi^{-1}(\lvert z \rvert) \leq  \lvert \Phi^{-1}(z) \rvert \leq m_1 \Phi^{-1}(\lvert z \rvert).\]
\end{enumerate}
Then, for a bounded operator $C:D(A) \rightarrow Y$ the following are equivalent
\begin{enumerate}[label=\rm (\roman*)]
\item  $\Phi^{-1}(-A)$ is infinite-time $\Lp_\Phi$-admissible,
\item It holds that
\begin{equation*}
C \text{ is infinite-time } \Lp_\Phi \text{-admissible} \Leftrightarrow 
\sup_{\real z >0} \widetilde{\Phi}^{-1}(\real z) ~ \lVert C(z-A)^{-1} \rVert < \infty.
\end{equation*}
\end{enumerate}
Here, $\widetilde{\Phi}$ denotes the to $\Phi$ complementary Young function, see \Cref{sec:Orlicz_spaces}.
\end{mainresult}

This paper is structured as follows. In \Cref{sec:Orlicz_spaces} we recall the for us necessary definitions and facts on Young functions $\Phi$ and Orlicz spaces $\Lp_\Phi$ and $\E_\Phi$. Furthermore, we introduce a subclass $\mathcal{P}$ of Young functions with polynomial behavior at $0$ and $\infty$ which play a role when dealing with the functional calculus in the context of the Weiss conjecture. We shift more technical results on Orlicz spaces to the Appendix, where the reader can also find results on the dual space of vector valued Orlicz spaces, the shift semigroup on Orlicz spaces and a generalization of Minkowski's integral inequality to Orlicz spaces used in the ensuing sections. 
In \Cref{sec:Engel} we formalize Orlicz admissibility for observation operators and prove \Cref{main:Engel}, which is an adoption of the $\Lp^p$ case by our previous investigations on the shift semigroup in Orlicz spaces. \Cref{sec:Orlicz-Weiss} is devoted to the extension of the Weiss conjecture for Orlicz spaces and the proof of \Cref{main:Weiss}. Furthermore, we give a sufficient condition for infinite-time $\Lp_\Phi$-admissibility of $\Phi^{-1}(-A)$ in case of a semigroup on $\ell^r$, $1 \leq r < \infty$ generated by a multiplication operator. In \Cref{sec:Duality} we discuss the duality concept of Orlicz admissible control and observation operators which allows to transfer all results on Orlicz admissibility to control operators.

In the following $X$ and $Y$ are Banach spaces and $A$ is the generator of a $C_0$-semigroup $(T(t))_{t\geq0}$ with growth bound $\omega(A)$. The domain of $A$ equipped with the graph norm is denoted by $D(A)$. The spectrum of $A$ is $\sigma(A)$ and its resolvent is $R(z,A)$ for $z$ in the resolvent set $\rho(A)$. The set of all bounded operators from $D(A)$ to $Y$ is given by $\mathcal{L}(D(A),Y)$. As usually, $I=I_X$ denotes the identity operator on the space $X$.

\section{Orlicz spaces}\label{sec:Orlicz_spaces}
We recall Young functions and Orlicz spaces. For details see \cite{Adams, KrasnRut, Kufner}.

\begin{definition}
A strictly increasing continuous and convex function $\Phi:(0,\infty) \rightarrow (0,\infty)$ is called {\em Young function} if
\[ \lim_{t \to 0} \frac{\Phi(t)}{t}=0 \quad \text{ and } \quad \lim_{t \to \infty} \frac{\Phi(t)}{t}= \infty.\]
\end{definition}

Such a $\Phi$ is invertible and extends continuously to $[0,\infty)$ by $\Phi(0)=0$.

\begin{definition}
Let $\Phi$ be a Young function. Then, the function $\widetilde{\Phi}$ defined by
\[ \widetilde{\Phi}(s) = \sup_{t \geq 0} (st - \Phi(t)) \]
is called the {\em to $\Phi$ complementary Young function} and $\Phi$ and $\widetilde{\Phi}$ are called {\em complementary to each other}.
\end{definition}
Indeed, $\widetilde{\Phi}$ defines a Young function and $\Phi(t)=\sup_{s \geq 0} (st - \widetilde{\Phi}(s))$.
For complementary Young functions $\Phi$ and $\widetilde{\Phi}$ it holds that (see~\cite{Adams})
\begin{equation}\label{eq:behavior_Phi^-1*tildePhi^-1}
t \leq \Phi^{-1}(t) \widetilde{\Phi}^{-1}(t) \leq 2t \quad \text{ for all } t \geq 0.
\end{equation}

Let $(\Omega,\mathcal{F},\mu)$ be a measure space and $Y$ be a Banach space. Whenever $\Omega$ is an interval in $\R$, $(\mathcal{F},\mu)$ refers to the Borel $\sigma$-algebra and the Lebesgue measure.

\begin{definition}
Let $\Phi$ be a Young function. The Orlicz space $\Lp_\Phi(\Omega;Y)$ is given by
\begin{align*}
\Lp_\Phi(\Omega;Y) \coloneqq \Big\{ f:\Omega \rightarrow Y \: \Big\vert \:& f \text{ is Bochner measurable and}\\
&\text{ for some } k>0: \: \int_\Omega \Phi\left( \frac{\lVert f(\cdot) \rVert_Y}{k} \right) \d \mu < \infty \Big\},
\end{align*}
where we identify functions which coincide almost everywhere. Further, we define the so-called {\em Luxemburg norm} for $f\in \Lp_\Phi(\Omega,Y)$ by
\begin{equation*}
\lVert f \rVert_{\Lp_\Phi(\Omega;Y)} \coloneqq \inf\left\{ k>0 \: \middle\vert \: \int_\Omega \Phi\left( \frac{\lVert f(\cdot) \rVert_Y}{k} \right) \d \mu \leq 1 \right\}.
\end{equation*}
We define a subspace of $\Lp_\Phi(\Omega;Y)$ by
\begin{equation*}
\E_\Phi(\Omega,Y) \coloneqq \overline{\{ f \in \Lp^\infty(\Omega;Y) \mid f \text{ has bounded essential support} \}}^{\lVert \cdot \rVert_{\Lp_\Phi}}. 
\end{equation*}
We may write $\Lp_\Phi$ and $\E_\Phi$ if $\Omega$ and $Y$ are clear from the context.\\
On $\Lp_\Phi(\Omega;Y)$ we can define the {\em Orlicz norm} by
\begin{align*}\label{eq:def_Orlicz_norm}
\interleave f \interleave_{\Lp_\Phi(\Omega;Y)} 
&\coloneqq
\sup \set{\left\lvert \int_\Omega \langle f, g \rangle_{Y,Y'} \d \mu \right\rvert}{ \lVert g \rVert_{\Lp_{\widetilde{\Phi}}(\Omega;Y')} \leq 1}\\
&=\sup \set{ \int_\Omega \left\lvert \langle f, g \rangle_{Y,Y'} \right\rvert \d \mu }{\lVert g \rVert_{\Lp_{\widetilde{\Phi}}(\Omega;Y')} \leq 1},
\end{align*}
where the second equality can be verified by considering $\tilde{g}=g \sgn(\langle f, g \rangle_{Y,Y'})$.
\end{definition}
ince $\lVert g \rVert_{\Lp_{\widetilde{\Phi}}(\Omega;Y')} \leq 1$ if and only if $\int_\Omega \widetilde{\Phi}(\lVert g \rVert_Y') \d \mu \leq 1$, the Orlicz norm can be interpreted as dual norm to the Luxemburg norm of the complementary Young functions. Both norms are equivalent, as for $f \in \Lp_\Phi(\Omega;Y)$ it holds that
\begin{equation}\label{eq:eqivalent_norms_Luxemburg_Orlicz}
\lVert f \rVert_{\Lp_\Phi(\Omega;Y)} \leq \interleave f \interleave_{\Lp_\Phi(\Omega;Y)} \leq  2 \lVert f \rVert_{\Lp_\Phi(\Omega;Y)}.
\end{equation}
Indeed, the inequality is well-known for the scalar valued case, see e.g.~\cite{Kufner}. It is less obvious for vector valued Orlicz spaces as the fact that $\interleave f \interleave_{\Lp_\Phi(\Omega;Y)} = \interleave \, \lVert f \rVert_Y \, \interleave_{\Lp_\Phi(\Omega;\R)}$ is non-trivial. Both proofs can be found in the Appendix. Finally, $\Lp_\Phi(\Omega;Y)$ and $\E_\Phi(\Omega;Y)$ equipped with these norms are complete. \medskip

An important property of Young functions is the so-called $\Delta_2$-condition. 
\begin{definition}
A Young function $\Phi$ satisfies the {\em $\Delta_2$-condition near infinity} ($\Phi \in \Delta_2^\infty$) if there exists constants $K,t_0 \geq 0$ such that
\begin{equation}\label{eq:Delta_2}
\Phi(2t) \leq K \Phi(t) \quad \text{ for all } t \geq t_0.
\end{equation}
We say that $\Phi$ satisfies the {\em $\Delta_2$-condition globally} ($\Phi \in \Delta_2^{\text{\tiny global}}$) if  \eqref{eq:Delta_2} holds with $t_0=0$.
\end{definition}

It is known that $E_\Phi(\Omega;Y) = \Lp_\Phi(\Omega;Y)$ if and only if $\Phi$ satisfies the $\Delta_2$-condition globally if $\mu(\Omega)=\infty$ or near infinity if $\mu(\Omega)<\infty$. Similar as for $\Lp^p$ spaces the {\em generalized H\"older inequality}
\begin{equation}\label{eq:hoelder}
\int_\Omega \lVert f(\cdot) \rVert_{Y_1} ~ \lVert g(\cdot) \rVert_{Y_2} \d \mu \leq 2 \lVert f \rVert_{\Lp_\Phi(\Omega;Y_1)} \lVert g \rVert_{\Lp_{\widetilde{\Phi}}(\Omega;Y_2)}
\end{equation}
holds for all $f \in \Lp_\Phi(\Omega;Y_1)$, $g \in \Lp_{\widetilde{\Phi}}(\Omega;Y_2)$ and Banach spaces $Y_1,Y_2$,

The Young function $\Phi(t)=t^p$ with $1<p<\infty$ satisfies the $\Delta_2$-condition globally. It leads to the Orlicz space $\Lp_\Phi=\E_\Phi=\Lp^p$ and the Luxemburg norm is just the standard norm on $\Lp^p$. Further, the to $\Phi$ complementary Young function is (up to constants) given by $\widetilde{\Phi}(s)=s^{p'}$ with $\frac{1}{p} + \frac{1}{p'}=1$ and hence $\Lp_{\widetilde{\Phi}}=\E_{\widetilde{\Phi}}=\Lp^{p'}$. Thus, Orlicz spaces and the concept of complementary Young functions generalize $\Lp^p$ spaces and H\"older conjugates in a natural way.\bigskip

We close the section by introducing a subclass of Young functions which play a role in \Cref{sec:Orlicz-Weiss} in the context of the holomorphic functional calculus.

\begin{definition}\label{def:class_P}
We say that a function $\Phi:(0,\infty) \rightarrow (0,\infty)$ is of class $\mathcal{P}$ ($\Phi \in \mathcal{P}$), if $\Phi$ is invertible and 
\begin{equation}\label{eq:Phi^-1calssP}
\Phi^{-1}(t) = t^{\nicefrac{1}{p}} \rho(t^{\nicefrac{1}{q}-\nicefrac{1}{p}}) \quad \text{ with } 1 < p < q < \infty
\end{equation}
for $t>0$, where $\rho:(0,\infty) \rightarrow (0,\infty)$ is a continuous concave function such that 
\begin{equation}\label{eq:property_rho}
\rho(st) \leq \max(1,s) \rho(t)  \quad \text{ for all } s,t>0.
\end{equation}
\end{definition}

\begin{remark}
\begin{enumerate}[label= \arabic*.]
\item By \Cref{apx:P_functions_are_Young_functions}, functions of class $\mathcal{P}$ are Young functions.
\item In \cite{karlovich2001interpolation} the authors state the following result on the representation of $\Phi$ and $\widetilde{\Phi}$, see \cite[Lem.~3.2]{karlovich2001interpolation}: If $\Phi \in \mathcal{P}$  is characterized by \eqref{eq:Phi^-1calssP}, then
\begin{equation}\label{eq:PhiclassP}
\Phi(t) = t^q h(t^{p-q})
\end{equation}
and
\begin{equation}\label{eq:tildePhiclassP}
\widetilde{\Phi}(t) = t^{p'} k(t^{q'-p'})
\end{equation}
where $p'$ and $q'$ are the H\"older conjugates to $p$ and $q$ and $h,k:(0,\infty)\rightarrow(0,\infty)$ are continuous quasi-concave functions such that $h(t)>0$ for $t>0$ and $h(st) \leq \max(1,s) \rho(t)$ for all $s,t>0$ (for $k$ the same). The functions $h$ and $k$ are defined by \eqref{eq:PhiclassP} and \eqref{eq:tildePhiclassP}.
\item From \eqref{eq:Phi^-1calssP}, \eqref{eq:PhiclassP} and \eqref{eq:tildePhiclassP} we derive
\begin{align}\label{eq:ineq_for_class_P_direct}
\begin{split}
\Phi^{-1}(st) &\leq  \max(s^{\nicefrac{1}{p}},s^{\nicefrac{1}{q}} )~ \Phi^{-1}(t),\\
\Phi(st) &\leq  \max(s^q,s^{p})~ \Phi(t),\\
\widetilde{\Phi}(st) &\leq  \max(s^{p'},s^{q'})~ \widetilde{\Phi}(t)
\end{split}
\end{align}
for $s,t>0$ and with the transformations $u=\max(s^q,s^{p})$, $v=\Phi(t)$  and $u=s^{p'} \max(s^{p'},s^{q'})$, $v=\widetilde{\Phi}(t)$
\begin{align}\label{eq:ineq_for_class_P_transformed}
\begin{split}
\min(u^{\nicefrac{1}{p}},u^{\nicefrac{1}{q}}) ~\Phi^{-1}(v) &\leq \Phi^{-1}(uv),\\
\min(u^{\nicefrac{1}{q'}},u^{\nicefrac{1}{p'}})~ \widetilde{\Phi}^{-1}(v) &\leq \widetilde{\Phi}^{-1}(uv).
\end{split}
\end{align}
In particular we infer by \eqref{eq:ineq_for_class_P_direct} that $\Phi,\widetilde{\Phi} \in \Delta_2^{\text{\tiny global}}$.
\end{enumerate}
\end{remark}

\begin{example}\label{example:class_P}
\begin{enumerate}[label=\rm (\roman*)]
\item If $\rho,\mu: (0,\infty) \rightarrow (0,\infty)$ are continuous concave functions satisfying \eqref{eq:property_rho}, then so are $a \rho + b \mu$ and  $\rho \circ \mu$ for $a,b>0$. To see that $\rho \circ \mu$ satisfies \eqref{eq:property_rho}, note that $\rho$ is increasing by \eqref{eq:property_rho}.
\item The trivial examples $\rho_r(t)= t^r$ for some $r \in[0,1]$ lead to the Young functions $\Phi(t)=t^\alpha$ with $\alpha  \in [p,q]$ (depending on $r$) when $\Phi^{-1}$ is given by \eqref{eq:Phi^-1calssP} with $1 < p< q<\infty$. If $r=0$, then $\alpha=p$ and if $r=1$, then $\alpha=q$. Indeed, $\alpha$ is given by $\frac{1}{\alpha} = \frac{r}{q} + \frac{1-r}{p} \in [\frac{1}{q},\frac{1}{p}]$. \\
For $r=0$ and $r=1$ the corresponding functions $\rho_0(t)=1$ and $\rho_1(t)=t$ can be seen as the extreme cases for $\rho$ when talking about the slope of increasing concave functions.
\item The following example can be found in \cite{karlovich2001interpolation}. Let $\Phi^{-1}$ be given by \eqref{eq:Phi^-1calssP} with $\rho(t)=\min(1,t)$, $t \geq 0$ and any choice of $1<p<q<\infty$. Then $\Phi$ is given by \eqref{eq:PhiclassP} with $h(t)=\max(1,t)$, $t \geq 0$. It is obvious that $\Phi$ is of class $\mathcal{P}$.
\item Let $\Phi^{-1}$ be given by \eqref{eq:Phi^-1calssP} with $\rho(t)=\log(1+t)$, $t \geq 0$ and any choice of $1<p<q<\infty$. Then, $\Phi$ is of class $\mathcal{P}$, $\Phi^{-1}$ has a holomorphic extension to any sector $S_\delta \coloneqq \{ z \in \C \setminus \{0\} \mid \lvert \arg z \rvert < \delta\} $ (taking the principal branch of the complex logarithm) and for $\delta \leq \frac{\pi}{3}$ then there exist constants $m_0,m_1>0$ such that
\begin{equation*}
m_0 \Phi^{-1}(\lvert z \rvert) \leq  \lvert \Phi^{-1}(z) \rvert \leq m_1 \Phi^{-1}(\lvert z \rvert)
\end{equation*} for $z \in S_\delta$. The technical details of this fact are shifted to the Appendix.
\end{enumerate}
\end{example}

\section{Characterizing Orlicz admissibility by semigroups}\label{sec:Engel}

We recall the definition of admissibility already indicated in the introduction.

\begin{definition}\label{def:adm}
An operator $C \in \Lb(D(A),Y)$ is called {\em $\Lp_\Phi$-admissible} for $A$ (or equivalently for $(T(t))_{t \geq 0}$) if for some (and hence for all) $\tau>0$ there exists a minimal constant $c_\tau>0$ such that
\begin{equation}\label{eq:adm}
\lVert CT(\cdot)x \rVert_{\Lp_\Phi(0,\tau;Y)} \leq c_\tau \lVert x \rVert_X \quad \text{ for all } x \in D(A).
\end{equation}
If, additionally,  $c_\infty \coloneqq \sup_{\tau \geq 0}c_\tau < \infty$, then $C$ is called {\em infinite-time $\Lp_\Phi$-admissible} for $A$. If there is no ambiguity, we will simply say that 
``$C$ is (infinite-time) $\Lp_\Phi$-admissible'' without referring to the semigroup.
Analogously, we define (infinite-time) $\E_\Phi$-admissibility.
\end{definition}
It is not hard to show that $\Lp_\Phi$-admissibility is invariant under scaling of the semigroup, i.e. $C$ is  $\Lp_\Phi$-admissible for $(T(t))_{t \geq 0}$ if and only if $C$ is $\Lp_\Phi$-admissible for the rescaled semigroup $(\e^{-\varepsilon t}T(t))_{t \geq 0}$  for any $\varepsilon \in \R$. Also note that if the semigroup is exponentially stable, then finite-time and infinite-time $\Lp_\Phi$-admissibility are equivalent. Moreover, $C$ is infinite-time $\Lp_\Phi$-admissible if and only if \eqref{eq:adm} holds for $\tau= \infty$ and a constant $c_\infty<\infty$. The same holds for $\E_\Phi$-admissibility. \medskip

The main result of this section is a generalization of a result by Callier--Grabowski \cite{CallGrab96}, see also Engel \cite{Engel.Char.Adm} for a shorter proof, from $\Lp^p$ to $\E_\Phi$.

\begin{theorem}\label{thm:Engel_generalization}
The operator $C \in \Lb(D(A),Y)$ is (infinite-time) $\E_\Phi$-admissible if and only if for some (and hence for all) $\tau>0$ ($\tau=\infty$ in the infinite-time case) the block operator matrix
\begin{equation}\label{eq:block_op_for_C}
\mathcal{A} =
 \begin{pmatrix}
A & 0 \\
0 & -\frac{\d}{\d r}
\end{pmatrix}
\begin{pmatrix}
I & 0 \\
L & I \\
\end{pmatrix}
\end{equation}
with domain
\begin{equation}\label{eq:domain_block_op_for_C}
D(\mathcal{A}) = \left\{ \begin{pmatrix}
x\\
f\\
\end{pmatrix}
\in D(A) \times \W^1\E_\Phi(0,\tau;Y) \, \middle\vert \, Cx+ f(0) = 0 \right\}
\end{equation}
generates a $C_0$-semigroup on $X \times \E_\Phi(0,\tau;Y)$, where $Lx:= \mathds{1}_{\vert_{[0,\tau]}} Cx$ for $x \in D(A)$.
\end{theorem}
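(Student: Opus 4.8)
The plan is to adapt Engel's short proof of the $\Lp^p$-case, the only substantial new ingredient being the behaviour of the shift semigroup and of the Luxemburg norm on $\E_\Phi$ supplied by the appendix. Reading off the factorization, on $D(\mathcal{A})$ the operator acts as $\mathcal{A}\binom{x}{f}=\binom{Ax}{-f'}$ subject to $Cx+f(0)=0$, so the abstract Cauchy problem for $\mathcal{A}$ decouples into $\dot{x}=Ax$ and the transport equation $\partial_t f=-\partial_r f$ with the boundary feed $f(t,0)=-Cx(t)=-CT(t)x$. Solving the transport part by characteristics suggests the explicit candidate
\[
\mathcal{T}(t)\begin{pmatrix} x \\ f \end{pmatrix} = \begin{pmatrix} T(t)x \\ g_t \end{pmatrix}, \qquad g_t(r) = \begin{cases} f(r-t), & r > t, \\ -C\,T(t-r)x, & 0 < r < t, \end{cases}
\]
(restricted to $(0,\tau)$), which I would take as a definition and then verify to be a $C_0$-semigroup generated by $\mathcal{A}$.

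The first key point is that $\mathcal{T}(t)$ maps $X\times\E_\Phi$ into itself boundedly if and only if $C$ is admissible. The shifted contribution $r\mapsto f(r-t)$ is controlled by the boundedness of the right-shift on $\E_\Phi$, whereas the output contribution $r\mapsto -CT(t-r)x$ on $(0,\min(t,\tau))$ equals, after the substitution $s=t-r$, the trajectory $CT(\cdot)x$ on a window; its $\E_\Phi$-norm is bounded by $\lVert x\rVert_X$ precisely by the admissibility estimate \eqref{eq:adm} (together with the semigroup law and local boundedness of $T$ to cover windows with $t>\tau$). Thus admissibility is exactly what turns the formally defined $\mathcal{T}(t)$ into bounded operators. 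For strong continuity I would split $g_t-f$ into the shift part $f(\cdot-t)-f$, which tends to $0$ by strong continuity of the shift semigroup on $\E_\Phi$, and the output part on $(0,t)$, whose norm vanishes because the Luxemburg norm of a bounded function supported on a shrinking interval tends to $0$ (absolute continuity of the $\E_\Phi$-norm). The semigroup law $\mathcal{T}(t+s)=\mathcal{T}(t)\mathcal{T}(s)$ is a direct bookkeeping check on the characteristics formula.

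It then remains to identify the generator as $\mathcal{A}$ with domain \eqref{eq:domain_block_op_for_C}. Differentiating $\mathcal{T}(\cdot)\binom{x}{f}$ at $t=0$ on $D(\mathcal{A})$ returns $\binom{Ax}{-f'}$; conversely, requiring the difference quotient to converge in $X\times\E_\Phi$ forces $x\in D(A)$, $f\in\W^1\E_\Phi$, and, crucially, the matching condition $Cx+f(0)=0$: if $Cx+f(0)\neq 0$, then $\tfrac1t(g_t-f)$ develops a spike of height $\sim 1/t$ on an interval of length $t$, whose $\E_\Phi$-norm $\tfrac1t\lVert\mathds{1}_{(0,t)}\rVert_{\E_\Phi}=\tfrac1{t\,\Phi^{-1}(1/t)}$ blows up because $\Phi^{-1}(1/t)=o(1/t)$ as $t\to 0^+$. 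Hence the generator domain coincides with $D(\mathcal{A})$.

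For the converse, if $\mathcal{A}$ generates a $C_0$-semigroup it must coincide with the family above, and I would test it against initial data $\binom{x}{f_0}$ with $x\in D(A)$ and $f_0$ a narrow tent at the origin satisfying $f_0(0)=-Cx$ and $\lVert f_0\rVert_{\E_\Phi}$ arbitrarily small (again using absolute continuity of the norm); boundedness of $\mathcal{T}(t)$ together with the restriction of $g_t$ to $(0,\min(t,\tau))$ then recovers the admissibility estimate after the substitution $s=t-r$. The finite- and infinite-time statements are obtained by running the same scheme on $(0,\tau)$ and on $(0,\infty)$, with the invariance of admissibility under exponential rescaling of the semigroup relating the two. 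The principal difficulty, and the only place where the Orlicz setting genuinely departs from $\Lp^p$, is the strong-continuity step: it rests on strong continuity of the shift semigroup on $\E_\Phi$ and on the absolute continuity of the Luxemburg norm, both established in the appendix and both failing on $\Lp_\Phi$ in general, which is exactly why the statement is formulated for $\E_\Phi$.
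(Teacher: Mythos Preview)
Your plan is sound and would yield a correct proof, but it takes a different route from the paper. The paper does not construct $\mathcal{T}(t)$ by hand and verify the semigroup axioms; instead it invokes Engel's abstract block--matrix criterion (stated here as \Cref{lem:Engel_generator}), which says that $\mathcal{A}$ generates a $C_0$-semigroup on $X\times F$ if and only if $A$ and $D$ do on their respective factors and $\sup_{t\in[0,t_0]}\lVert R(t)\rVert<\infty$, where $R(t)x=D\int_0^t S(t-s)LT(s)x\,\mathrm{d}s$. Taking $F=\E_\Phi(0,\tau;Y)$ and $D=-\tfrac{\mathrm d}{\mathrm dr}$ (the shift generator on $\E_\Phi$, supplied by the appendix), one computes $[R(t)x](r)=-\mathds{1}_{[0,t]}(r)\,CT(t-r)x$, so uniform boundedness of $R(t)$ is literally the admissibility estimate. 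That is the entire proof.

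What your approach buys is self-containment: you avoid citing \Cref{lem:Engel_generator} and instead reprove its content in this concrete instance. The cost is that several steps you sketch require genuine work. In the forward direction, identifying the generator on $D(\mathcal{A})$ is more delicate than ``differentiating returns $(Ax,-f')$'': the second component mixes the shift difference quotient on $(t,\tau)$ with the output piece on $(0,t)$, and $f$ itself need not lie in the domain of the shift generator (since $f(0)\neq 0$ in general), so the boundary matching $Cx+f(0)=0$ has to be used explicitly in the limit. In the converse direction, the sentence ``it must coincide with the family above'' begs the question, since your explicit $\mathcal{T}(t)$ is not a priori bounded when $C$ is not known to be admissible; the argument you actually need (and which you indicate in the next sentence) is to compute the orbit through $(x,f_0)\in D(\mathcal{A})$ via uniqueness of classical solutions and then read off admissibility from the local uniform boundedness of the assumed $C_0$-semigroup. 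The paper's use of \Cref{lem:Engel_generator} hides exactly these verifications inside a black box, which is why its proof fits in a paragraph.
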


The proof of \Cref{thm:Engel_generalization} follows Engel's approach, relying on the following consequence from \cite{Engel.Op.Matr}, and the fact that $-\frac{\d}{\d r}$ with domain $\{ f \in \W^1\E_\Phi(0,\tau;Y) \mid f(0)=0 \}$ generates the right shift semigroup on $\E_\Phi(0,\tau;Y)$, see~\Cref{apx:rightshift_EPhi}.

\begin{lemma}\label{lem:Engel_generator}
If $X$ and $F$ are Banach spaces, $A:D(A) \subseteq X \rightarrow X$ and $D:D(D) \subseteq F \rightarrow F$ are closed and densely defined operators such that $(\omega,\infty) \subseteq \rho(A) \cap \rho(D)$ for some $\omega \in \R$ and if $L \in \Lb(D(A),F)$, then the following are equivalent
\begin{enumerate}[label=(\alph*)]
\item The block operator matrix
\begin{equation*}
\mathcal{A} = \begin{pmatrix}
A & 0 \\
0 & D \\
\end{pmatrix}
\begin{pmatrix}
I & 0 \\
L & I \\
\end{pmatrix}
\end{equation*}
with domain
\begin{equation*}
D(\mathcal{A}) = \left\{ \begin{pmatrix}
x \\
f \\
\end{pmatrix} \in D(A) \times F \, \middle\vert \, Lx + f \in D(D) \right\}
\end{equation*}
generates a $C_0$-semigroup on $X \times F$,
\item $A$ generates a $C_0$-semigroup $(T(t))_{t \geq 0 }$ on $X$, $D$ generates a $C_0$-semigroup $(S(t))_{t \geq 0}$ on $F$ and for some (and hence for all) $t_0>0$ it holds that $\sup_{t \in [0,t_0] } \lVert R(t) \rVert_{\Lb(X,F)}< \infty$, where $R(t)$ is the bounded extension of the operator
\begin{equation*}
R(t)x= D \int_0^t S(t-s) L T(s)x \d s , \quad x \in D(A^2).
\end{equation*}
\end{enumerate}
If one of the equivalent conditions is satisfied, the semigroup generated by $\mathcal{A}$ is given by
\begin{equation*}
T_{\mathcal{A}}(t) = \begin{pmatrix}
T(t) & 0 \\
R(t) & S(t) \\
\end{pmatrix}.
\end{equation*}
\end{lemma}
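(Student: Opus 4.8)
The plan is to prove both implications around the explicit candidate semigroup
\[
T_{\mathcal{A}}(t) = \begin{pmatrix} T(t) & 0 \\ R(t) & S(t) \end{pmatrix}.
\]
The guiding observation is that multiplying out the factorisation gives $\mathcal{A}(x,f) = (Ax,\, D(Lx+f))$, so along a solution the first coordinate obeys $\dot x = Ax$ and the second the inhomogeneous equation $\dot f = Df + DLx$; the variation-of-constants formula for the latter, with $x(\cdot)=T(\cdot)x_0$, produces exactly the claimed block form, with $R(t)x_0$ being the (suitably extended) Duhamel term. The common hinge of both directions is a resolvent computation: solving $(\lambda-\mathcal{A})(x,f)=(y,g)$ for $\lambda\in(\omega,\infty)\subseteq\rho(A)\cap\rho(D)$ by introducing $v=Lx+f\in D(D)$ gives $x=(\lambda-A)^{-1}y$ and $v=(\lambda-D)^{-1}(g+\lambda Lx)$, whence, using $\lambda(\lambda-D)^{-1}-I=D(\lambda-D)^{-1}$,
\[
(\lambda-\mathcal{A})^{-1}\begin{pmatrix} y \\ g \end{pmatrix} = \begin{pmatrix} (\lambda-A)^{-1}y \\ (\lambda-D)^{-1}g + D(\lambda-D)^{-1}L(\lambda-A)^{-1}y \end{pmatrix}.
\]
Here the off-diagonal block is a bounded operator from $X$ to $F$, and a convolution-theorem calculation shows that the Laplace transform of $R(t)$ equals precisely this block.

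For the implication (b) $\Rightarrow$ (a) I would proceed in four steps. First, since $A$ generates a semigroup the subspace $D(A^2)$ is dense, so the hypothesis that $R(t)$ extends boundedly with $\sup_{t\in[0,t_0]}\lVert R(t)\rVert<\infty$ renders $T_{\mathcal{A}}(t)$ a locally bounded operator family. Second, I would establish the cocycle identity $R(t+s)=R(t)T(s)+S(t)R(s)$ on $D(A^2)$ by splitting the defining integral at $s$, using $DS(t)=S(t)D$ on $D(D)$ for the first part and the substitution $r\mapsto r-s$ together with invariance of $D(A^2)$ under $T(s)$ for the second, then extend it by density; this is exactly the semigroup law for $T_{\mathcal{A}}$. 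Third, for strong continuity at $0$ it suffices, by local boundedness and density, to check $R(t)x\to 0$ for $x\in D(A^2)$, which I would obtain by rewriting the Duhamel term for the $C^1$-integrand $s\mapsto LT(s)x$ as
\[
R(t)x = S(t)Lx - LT(t)x + \int_0^t S(t-s)LT(s)Ax \d s,
\]
whose limit as $t\to 0^+$ is $Lx-Lx+0=0$. Finally, I would identify the generator: the Laplace transform of $T_{\mathcal{A}}$ equals $(\lambda-\mathcal{A})^{-1}$ by the computation above, and since both $\mathcal{A}$ and the generator of $T_{\mathcal{A}}$ are closed with the same resolvent on $(\omega,\infty)$, they coincide.

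For the converse (a) $\Rightarrow$ (b), I would read everything off the block structure of $(\lambda-\mathcal{A})^{-1}$. Its vanishing $(1,2)$-block forces, via injectivity of the Laplace transform, the semigroup $U$ generated by $\mathcal{A}$ to be lower triangular; its diagonal blocks are then $C_0$-semigroups whose Laplace transforms are $(\lambda-A)^{-1}$ and $(\lambda-D)^{-1}$, so the closed operators $A$ and $D$ are their generators. The $(2,1)$-block $U_{21}(t)$ is a locally bounded entry of a $C_0$-semigroup whose Laplace transform agrees with that of the integral formula for $R(t)$; by injectivity of the Laplace transform on continuous functions, $U_{21}(t)$ is the bounded extension of $R(t)$, and local boundedness of $U$ yields the uniform bound required in (b).

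The step I expect to be the main obstacle is controlling the \emph{doubly} unbounded coupling $DL$: the operator $R(t)$ is a priori defined only on $D(A^2)$, and the interchange of the unbounded $D$ with the Duhamel integral (and with the Laplace integral) must be justified, through closedness of $D$ and the $C^1$-regularity of $s\mapsto LT(s)x$, before any density argument can run. The resolvent identity $\lambda(\lambda-D)^{-1}-I=D(\lambda-D)^{-1}$ is what ultimately tames this, converting the formally unbounded off-diagonal block into the bounded operator $D(\lambda-D)^{-1}L(\lambda-A)^{-1}$ and making the Laplace-transform matching rigorous; the remainder is density-and-continuity bookkeeping. I note finally that the statement is also a specialisation of the general theory of one-sided coupled operator matrices in \cite{Engel.Op.Matr}.
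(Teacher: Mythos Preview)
The paper does not prove this lemma at all; it is merely stated as a ``consequence from \cite{Engel.Op.Matr}'' and then applied. Your proposal, by contrast, supplies a complete self-contained argument, and you even note at the end that the result specialises Engel's general theory of one-sided coupled operator matrices.

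The argument you sketch is correct. The resolvent computation for $\mathcal{A}$ is right, and the off-diagonal block $D(\lambda-D)^{-1}L(\lambda-A)^{-1}$ is indeed bounded since $L\in\Lb(D(A),F)$ and $D(\lambda-D)^{-1}\in\Lb(F)$. For (b)$\Rightarrow$(a), the cocycle identity, the $C^1$ Duhamel rewrite $R(t)x=S(t)Lx-LT(t)x+\int_0^t S(t-s)LT(s)Ax\,\mathrm{d}s$ on $D(A^2)$, and the Laplace-transform matching with $(\lambda-\mathcal{A})^{-1}$ all go through as you describe; the key technical point---that $\int_0^t S(t-s)LT(s)x\,\mathrm{d}s\in D(D)$ for $x\in D(A^2)$ because $s\mapsto LT(s)x$ is $C^1$---is precisely the ``doubly unbounded coupling'' issue you correctly flag and handle. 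For (a)$\Rightarrow$(b), reading the lower-triangular block structure of the semigroup off the resolvent via injectivity of the Laplace transform is the right idea, and local boundedness of the $(2,1)$-entry is inherited from the $C_0$-semigroup.

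What the paper's route (a citation) buys is brevity; what yours buys is a proof that does not send the reader to another source. There is no gap in your proposal.
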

\begin{proof}[Proof of \Cref{thm:Engel_generalization}]
\Cref{lem:Engel_generator} for $D=-\frac{\d}{\d r}$ with domain $\{ f \in \W^1\E_\Phi(0,\tau;Y) \mid f(0)=0 \}$ and $F=\E_\Phi(0,\tau;Y)$ yields that $\mathcal{A}$ given by \eqref{eq:block_op_for_C} and \eqref{eq:domain_block_op_for_C} generates a $C_0$-semigroup on $X \times \E_\Phi(0,\tau;X)$ if and only if $\sup_{t \in [0,\tau]} \lVert R(t) \rVert < \infty$. Since $D$ generates the right shift semigroup $(S(t))_{t \geq 0}$ on $\E_\Phi(0,\tau;Y)$ we conclude that $R(t)$ is given by
\begin{align*}
[R(t)x](r) &=
- \frac{\d}{\d r} \int_0^t S(t-s) \mathds{1}_{\vert_{[0,t_0]}}(r) CT(s)x \d s \\
&= - \frac{\d}{\d r} \int_{\max\{0,t-r\}}^t CT(s)x \d s \\
&= - \mathds{1}_{\vert_{[0,t]}}(r) CT(\max\{0,t-r\})x
\end{align*}
for $x \in D(A^2)$ and $r \in [0,\tau]$. Thus, $\mathcal{A}$ generates a $C_0$-semigroup on $X \times  \E_\Phi(0,\tau;Y)$ (with $\tau=\infty)$ if and only if $C$ is (infinite-time) $\E_\Phi$-admissible.
\end{proof}

\section{On the Weiss conjecture for Orlicz spaces}\label{sec:Orlicz-Weiss}


We start by introducing some notation and assumptions in order to generalize Haak's result on the $p$-Weiss conjecture as stated in the introduction, see also \cite{Haakthesis}, to Orlicz spaces $\Lp_\Phi$.

We denote by $\C_\alpha$, $\alpha \in \R$, and $S_\delta$, $\delta \in (0,\frac{\pi}{2})$ the open right half-plane with abcissa $\alpha$ and the open sector with opening angle $2\delta$, i.e.
\[\C_\alpha = \{ z \in \C \mid \real z > \alpha \} \quad \text{and}\quad S_\delta = \{ z \in \C \setminus \{0\} \mid  \lvert \arg z \rvert < \delta\}.\] 
For $\delta =0$ we write $S_0=(0,\infty)$.\medskip


We recall that $A$ is the generator of a bounded analytic semigroup if $-A$ is densely defined and  {\em sectorial of type $\omega$} for some $\omega \in [0,\frac{\pi}{2})$. The latter means that $\sigma(-A) \subset \overline{S_\omega}$ and for every $\delta \in (\omega,\pi)$ there is a constant $M_\delta>0$ such that
\[ \lVert z R(z,-A) \rVert \leq M_\delta \quad \text{ for all } z \in \C \setminus \overline{S_\delta}.\] 
For more about sectorial operators, the holomorphic functional calculus and the connection to bounded analytic semigroups we refer to \cite{Ha06} and \cite{engelnagel}. Whenever we are dealing with the generator $A$ of a bounded analytic semigroup $\omega_A$ is referred to as the {\em angle of sectoriality} of $-A$, that is the infimum of all $\omega$ such that $-A$ is sectorial of type $\omega$. Besides the holomorphic functional calculus, the functional calculus for multiplication operators (see~\cite[Ch.~2]{isem21}) will be of interest.

The assumption of having a bounded analytic semigroup is necessary for Haak's result in the sense that if $(-A)^{\nicefrac{1}{p}}$ is $\Lp^p$-admissible and the semigroup is bounded then it is already bounded analytic, \cite[Prop.~2.7]{hamid2010direct}. 

When passing from $\Lp^p$ to $\Lp_\Phi$ we have to substitute both the $p$-Weiss condition \eqref{eq:p_Weiss_condition} and the operator $(-A)^{\nicefrac{1}{p}}$ in terms of the Young function $\Phi$ or its complementary Young function $\widetilde{\Phi}$. While the former is rather straight-forward, the latter is more delicate. First we generalize the $p$-Weiss condition \eqref{eq:p_Weiss_condition}.



\begin{definition}\label{def:Phi_Weiss_condition}
An operator $C \in \Lb(D(A),Y)$ is said to satisfy the {\em $\Phi$-Weiss condition}, for a Young function $\Phi$, if
\begin{equation}\label{eq:Phi_Weiss_condition}
\sup_{z \in \C_\alpha} \left(\lVert \e^{-\real z \cdot} \rVert_{\Lp_{\widetilde{\Phi}}(0,\infty)} \right)^{-1} \lVert C R(z,A) \rVert < \infty
\end{equation}
for some $\alpha>0$, where $\widetilde{\Phi}$ is the complementary Young function of $\Phi$. We say that $C$ satisfies the {\em infinite-time $\Phi$-Weiss condition} if \eqref{eq:Phi_Weiss_condition} holds for $\alpha =0$.
\end{definition}

\begin{remark}
If $\widetilde{\Phi} \in \Delta_2^{\text{\tiny global}}$ then we can replace $(\lVert \e^{- \real z \cdot} \rVert_{\Lp_{\widetilde{\Phi}}(0,\infty})^{-1}$ by $\widetilde{\Phi}^{-1}(\real z)$, see~\Cref{apx:Orlicz_norm_exp_function}. Recall that if $\Phi \in \mathcal{P}$ then $\widetilde{\Phi} \in \Delta_2^{\text{\tiny global}}$. It is obvious that the definitions of the $\Phi$-Weiss condition and the $p$-Weiss condition \eqref{eq:p_Weiss_condition} are consistent in the sense that they are the same if we consider $\Phi(t)=t^p$ for $1<p<\infty$.
\end{remark}

Similar to $\Lp^p$ spaces it is easy to prove that (infinite-time) $\Lp_\Phi$-admissibility of $C \in \Lb(D(A),Y)$ implies the (infinite-time) $\Phi$-Weiss condition. 
 
\begin{lemma}\label{lem:adm_implies_Weiss_condition}
If $C \in \Lb(D(A),Y)$ is (infinite-time) $\Lp_\Phi$-admissible, then the (infinite-time) $\Phi$-Weiss condition holds.
\end{lemma}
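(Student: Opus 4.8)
The plan is to show that infinite-time $\Lp_\Phi$-admissibility yields the $\Phi$-Weiss condition by applying the admissibility estimate directly to the Laplace transform of the output. Recall that for $x \in D(A)$ and $z \in \C_0$ (so that $\real z > \omega(A)$ after a harmless rescaling, which by the scaling invariance noted after \Cref{def:adm} does not affect admissibility) one has the representation
\begin{equation*}
C R(z,A)x = \int_0^\infty \e^{-z s} CT(s)x \d s,
\end{equation*}
valid since $CR(z,A) \in \Lb(X,Y)$ and the resolvent is the Laplace transform of the semigroup. First I would write $\lVert CR(z,A)x \rVert_Y \leq \int_0^\infty \e^{-(\real z) s} \lVert CT(s)x \rVert_Y \d s$, thereby reducing the problem to estimating the integral of the product of the scalar function $\e^{-(\real z)\cdot}$ against $\lVert CT(\cdot)x \rVert_Y$.

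The core of the argument is then a single application of the generalized H\"older inequality \eqref{eq:hoelder}. Taking $Y_1 = Y$, $Y_2 = \R$, $f = CT(\cdot)x$ and $g = \e^{-(\real z)\cdot}$, inequality \eqref{eq:hoelder} gives
\begin{equation*}
\int_0^\infty \e^{-(\real z) s}\lVert CT(s)x \rVert_Y \d s \leq 2 \lVert CT(\cdot)x \rVert_{\Lp_\Phi(0,\infty;Y)} \lVert \e^{-(\real z)\cdot} \rVert_{\Lp_{\widetilde{\Phi}}(0,\infty)}.
\end{equation*}
Now infinite-time $\Lp_\Phi$-admissibility ($\tau = \infty$, constant $c_\infty$) bounds the first factor on the right by $c_\infty \lVert x \rVert_X$. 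Combining the two displays yields
\begin{equation*}
\lVert CR(z,A)x \rVert_Y \leq 2 c_\infty \lVert \e^{-(\real z)\cdot} \rVert_{\Lp_{\widetilde{\Phi}}(0,\infty)} \lVert x \rVert_X,
\end{equation*}
and taking the supremum over $x$ with $\lVert x \rVert_X \leq 1$ gives $\lVert CR(z,A)\rVert \leq 2 c_\infty \lVert \e^{-(\real z)\cdot} \rVert_{\Lp_{\widetilde{\Phi}}(0,\infty)}$. Rearranging this is exactly the infinite-time $\Phi$-Weiss condition \eqref{eq:Phi_Weiss_condition} with $\alpha = 0$, since $\sup_{z \in \C_0} (\lVert \e^{-(\real z)\cdot} \rVert_{\Lp_{\widetilde{\Phi}}(0,\infty)})^{-1} \lVert CR(z,A)\rVert \leq 2c_\infty < \infty$. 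The finite-time statement follows identically, replacing $\Lp_\Phi(0,\infty;Y)$ by $\Lp_\Phi(0,\tau;Y)$ and $\e^{-(\real z)\cdot}$ by its restriction to $(0,\tau)$, and using that admissibility is equivalent to the estimate for $\tau = \infty$ after rescaling.

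The main point requiring care is the Laplace transform representation $CR(z,A)x = \int_0^\infty \e^{-zs}CT(s)x \d s$: it must be justified that $CT(\cdot)x$ is integrable against $\e^{-(\real z)\cdot}$ (which, for $x \in D(A)$, follows from $CT(\cdot)x \in \Lp_\Phi(0,\infty;Y)$ together with $\e^{-(\real z)\cdot} \in \Lp_{\widetilde{\Phi}}(0,\infty)$ via \eqref{eq:hoelder}) and that $C$ may be pulled out of the Bochner integral defining $R(z,A)x = \int_0^\infty \e^{-zs}T(s)x \d s$. The latter is standard because $C$ is closed on $D(A)$ and $R(z,A)x \in D(A)$; in the infinite-time case one additionally needs $\real z > 0$ to make the scalar weight $\e^{-(\real z)\cdot}$ lie in $\Lp_{\widetilde{\Phi}}(0,\infty)$, which is precisely why the $\Phi$-Weiss supremum is taken over $\C_0$. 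Everything else is a routine chain of inequalities, so I do not anticipate any genuine obstacle beyond bookkeeping the integrability justification.
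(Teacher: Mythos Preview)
Your proof is correct and follows essentially the same route as the paper: represent $CR(z,A)x$ as the Laplace transform $\int_0^\infty \e^{-zs}CT(s)x\,\d s$, apply the generalized H\"older inequality \eqref{eq:hoelder}, and invoke the admissibility bound. For the finite-time case the paper does exactly the rescaling reduction you mention at the end (shift $A$ by $\alpha>\max(\omega(A),0)$ to obtain an exponentially stable semigroup and apply the infinite-time argument); your aside about restricting to $(0,\tau)$ is not needed and should be dropped, since the resolvent integral remains over $(0,\infty)$.
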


\begin{proof}
If $C$ is infinite-time $\Lp_\Phi$-admissible. Using the generalized H\"older inequality \eqref{eq:hoelder} we get for all $x \in D(A)$ that
\begin{align*}
\lVert C R(z,A)x \rVert
&= \left \lVert \int_0^\infty \e^{-zt} CT(t)x \d t \right \rVert \\
&\leq 2 \lVert \e^{- \real z \cdot} \rVert_{\Lp_{\widetilde{\Phi}}(0,\infty)} \lVert CT(\cdot)x \rVert_{\Lp_\Phi(0,\infty;Y)} \\
&\leq 2 c_\infty \lVert \e^{- \real z \cdot} \rVert_{\Lp_{\widetilde{\Phi}}(0,\infty)} \lVert x \rVert
\end{align*} 
holds, where $c_\infty$ denotes the admissibility constant from \Cref{def:adm}. If $C$ is $\Lp_\Phi$-admissible for $(T(t))_{t \geq 0}$, then $C$ is infinite-time $\Lp_\Phi$-admissible for the exponentially stable semigroup generated by $A-\alpha$, where $\alpha>\omega(A)$ and $\alpha>0$. Hence, the proof can be deduced from the infinite-time case.
\end{proof}

{If $\Lp_\Phi=\Lp^p$ Haak's result tells us that the converse of \Cref{lem:adm_implies_Weiss_condition} holds if and only if $\Phi^{-1}(-A)=(-A)^{\nicefrac{1}{p}}$ is (infinite-time) $\Lp^p$-admissible, hence formally $\Phi^{-1}(-A)$ seems to be a suitable operator to characterize general $\Lp_\Phi$-admissibility. However, we have to make sure that this operator is indeed a well-defined in $\Lb(D(A),X)$. Therefore, we make the following assumption on $\Phi$.

\begin{assumption}\label{assump:Phi^-1_holomorph}
Assume that $\Phi^{-1}$ extends to a holomorphic function on some sector $S_\delta$ for $\delta \in (\omega_A,\frac{\pi}{2})$ and that there exist constants $m_0,m_1>0$ such that 
\[m_0 \Phi^{-1}(\lvert z \rvert) \leq  \lvert \Phi^{-1}(z) \rvert \leq m_1 \Phi^{-1}(\lvert z \rvert) \quad \text{ for all } z\in S_\delta.\]
\end{assumption}

We continue with a sequence of technical statements on the operator $\Phi^{-1}(-A)$ and the $\Phi$-Weiss condition. Our approach is based on the ideas from \cite{hamid2010direct}, which seem to be slightly more elementary than the more natural proof of Haak using square function estimates. It seems to be a non-trivial challenge to generalize such square function estimates to the Orlicz space setting.\medskip


Recall that a multiplication operator is an operator $M_a:\Lp^p(\Omega)\rightarrow \Lp^p(\Omega)$ for some semi-finite measure space $(\Omega,\mathcal{F},\mu)$, $1 \leq p \leq \infty$ and $a:\Omega \rightarrow \C$ measurable, given by
\begin{equation*}
M_af \coloneqq af \qquad \text{ for } \qquad f \in D(M_a) \coloneqq \{ f \in \Lp^p(\Omega) \mid a f \in \Lp^p(\Omega) \}.
\end{equation*}

\begin{lemma}
Suppose that $A$ generates a bounded analytic semigroup. If either
\begin{enumerate}[label=\rm (\roman*)]
\item $A$ is a multiplication operator with $\sigma(-A)\subseteq[0,\infty)$ or 
\item \Cref{assump:Phi^-1_holomorph} holds  (for $\Phi^{-1})$ and additionally $\Phi \in \mathcal{P}$
\end{enumerate}
then $\Phi^{-1}(-A)$ is well-defined via the functional calculus for multiplication operators and holomorphic functional calculus, respectively, and $\Phi^{-1}(-A) \in \Lb(D(A),X)$.
\end{lemma}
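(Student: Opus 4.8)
The plan is to treat the two cases separately and, in each, to reduce the assertion to elementary growth estimates for the scalar function $\Phi^{-1}$ on $(0,\infty)$. The starting point is that $\Phi^{-1}$ is concave and increasing with $\Phi^{-1}(0)=0$, being the inverse of a convex increasing function vanishing at $0$; hence $t\mapsto\Phi^{-1}(t)/t$ is non-increasing and one gets the crude bound
\[
\Phi^{-1}(t)\le \Phi^{-1}(1)\,(1+t),\qquad t\ge0.
\]
When $\Phi\in\mathcal{P}$ I would instead use the sharper two-sided estimates obtained by setting the second argument equal to $1$ in the first line of \eqref{eq:ineq_for_class_P_direct}, namely $\Phi^{-1}(t)\le \Phi^{-1}(1)\,t^{\nicefrac{1}{q}}$ for $t\le1$ and $\Phi^{-1}(t)\le \Phi^{-1}(1)\,t^{\nicefrac{1}{p}}$ for $t\ge1$, which control $\Phi^{-1}$ near $0$ and near $\infty$ respectively.

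For case (i) I would write $-A=M_a$ with $a\colon\Omega\to[0,\infty)$ measurable, using $\sigma(-A)\subseteq[0,\infty)$ to arrange $a\ge0$ after modification on a null set. The functional calculus for multiplication operators then gives $\Phi^{-1}(-A)=M_{\Phi^{-1}\circ a}$, which is well-defined and closed since $\Phi^{-1}\circ a$ is measurable. For $x\in D(A)=D(M_a)$ the pointwise inequality $|\Phi^{-1}(a)\,x|\le \Phi^{-1}(1)\,(|x|+|a\,x|)$ together with $\|M_a x\|=\|Ax\|$ shows $\Phi^{-1}(a)\,x\in X$, so $D(A)\subseteq D(\Phi^{-1}(-A))$ and $\|\Phi^{-1}(-A)x\|\le\Phi^{-1}(1)\,(\|x\|+\|Ax\|)$; thus $\Phi^{-1}(-A)\in\Lb(D(A),X)$.

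For case (ii) I would use the holomorphic functional calculus for the sectorial operator $-A$ of angle $\omega_A$. By \Cref{assump:Phi^-1_holomorph}, $\Phi^{-1}$ is holomorphic on $S_\delta$ for some $\delta\in(\omega_A,\nicefrac{\pi}{2})$ with $|\Phi^{-1}(z)|\le m_1\Phi^{-1}(|z|)$, so the class-$\mathcal{P}$ bounds from the overview yield $|\Phi^{-1}(z)|\le m_1\Phi^{-1}(1)\,|z|^{\nicefrac{1}{q}}$ for $z\in S_\delta$, $|z|\le1$, and $|\Phi^{-1}(z)|\le m_1\Phi^{-1}(1)\,|z|^{\nicefrac{1}{p}}$ for $|z|\ge1$. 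I would regularize $f(z)=\Phi^{-1}(z)$ by $e(z)=(1+z)^{-1}$: here $e(-A)=(I-A)^{-1}$ is bounded and injective (since $1\in\rho(A)$), while $g:=ef=\Phi^{-1}(z)/(1+z)$ lies in $H^\infty_0(S_\delta)$. Indeed, $\real z>0$ on $S_\delta$ gives $|1+z|\ge1$ and $|1+z|\ge|z|\cos\delta$, whence $|g(z)|\lesssim|z|^{\nicefrac{1}{q}}$ near $0$ and $|g(z)|\lesssim|z|^{\nicefrac{1}{p}-1}$ near $\infty$, so that $|g(z)|\le C\min(|z|^{s},|z|^{-s})$ with $s=\min(\nicefrac{1}{q},\,1-\nicefrac{1}{p})>0$. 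Consequently $g(-A)\in\Lb(X)$ through the convergent Cauchy integral, and $\Phi^{-1}(-A):=e(-A)^{-1}(ef)(-A)=(I-A)\,g(-A)$ is well-defined. Since $g(-A)$ commutes with the resolvents of $A$, the product rule of the holomorphic calculus gives $\Phi^{-1}(-A)\supseteq g(-A)\,(I-A)$, so for $x\in D(A)$ one obtains $\|\Phi^{-1}(-A)x\|=\|g(-A)(I-A)x\|\le\|g(-A)\|\,(\|x\|+\|Ax\|)$ and hence $\Phi^{-1}(-A)\in\Lb(D(A),X)$.

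I expect the main obstacle to be the verification that $g=\Phi^{-1}/(1+z)\in H^\infty_0(S_\delta)$, since this is exactly where both hypotheses of case (ii) enter: \Cref{assump:Phi^-1_holomorph} is what transfers the real-variable decay and growth of $\Phi^{-1}$ into the sector, while membership in class $\mathcal{P}$ supplies the two distinct exponents $\nicefrac{1}{q}$ and $\nicefrac{1}{p}$ that render both the behaviour at $0$ and at $\infty$ integrable against the resolvent in the Cauchy integral. The remaining points—checking that $e$ is an admissible regularizer and that the product rule applies with $g(-A)$ bounded—are routine bookkeeping within the functional calculus.
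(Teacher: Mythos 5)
Your proposal is correct and follows essentially the same route as the paper: regularize $\Phi^{-1}$ by $(1+z)^{-1}$, show the resulting function is bounded on $[0,\infty)$ in the multiplication-operator case and satisfies a $\min(\lvert z\rvert^{\alpha},\lvert z\rvert^{-\alpha})$ bound on $S_\delta$ (via \Cref{assump:Phi^-1_holomorph} and the class-$\mathcal{P}$ inequalities) in the sectorial case, then recover $\Phi^{-1}(-A)$ on $D(A)$ through the inclusion $g(-A)(I-A)\subseteq\Phi^{-1}(-A)$. The only cosmetic differences are that you obtain the decay at infinity directly from $\Phi^{-1}(t)\le\Phi^{-1}(1)t^{\nicefrac{1}{p}}$ where the paper routes through $\widetilde{\Phi}^{-1}$ and \eqref{eq:behavior_Phi^-1*tildePhi^-1}, arriving at the same exponent $-\nicefrac{1}{p'}$.
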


\begin{proof}
For technical details on the functional calculus for multiplication operators and holomorphic functional calculus we will use in this proof, we refer to \cite[Ch.~2]{isem21} and \cite[Ch.~2]{Ha06}. 
Let
\begin{equation*}
f(z)=\frac{\Phi^{-1}(z)}{1+z}.
\end{equation*}
It suffices to prove that $f(-A)$ is bounded, where $f(-A)$ is defined via the measurable functional calculus if we consider (i) and via the holomorphic functional calculus if we consider (ii). Indeed, we obtain from the properties of the functional calculi that
\begin{equation*}
f(-A) (1-A) \subseteq \Phi^{-1}(-A)
\end{equation*}
in the sense of inclusion of the respective graphs of the operators. If $f(-A)$ is bounded the operator on the left-hand side lies in $\Lb(D(A),X)$ and so does $\Phi^{-1}(-A)$.
We distinguish between the two possible assumptions:
\begin{enumerate}[label=(\rm \roman*)]
\item Since $f$ is a bounded function on $[0,\infty)$ we derive from the functional calculus for multiplication operators that $f(-A)$ is bounded.
\item To prove that $f(-A)$ is a bounded operator, it suffices to prove that there exist $c,\alpha>0$ such that
\begin{equation}\label{eq:H_0^infinity}
\left \lvert f(z) \right\rvert \leq c \min(\lvert z \rvert^\alpha , \lvert z \rvert^{-\alpha} ) \quad \text{ for all } z \in S_\delta,
\end{equation}
By \Cref{assump:Phi^-1_holomorph}, $\Phi^{-1}$ is holomorphic on some sector $S_\delta$ and $\lvert\Phi^{-1}(z)\rvert\leq m_{1}\Phi^{-1}(\lvert z\rvert)$ for $z\in S_{\delta}$. 
Since $\Phi\in\mathcal{P}$, we infer by \eqref{eq:ineq_for_class_P_direct} that, for $\lvert z \rvert \leq 1$,
\begin{equation*}
\frac{\Phi^{-1}(\lvert z \rvert)}{\lvert 1+z \rvert} \leq \Phi^{-1}(\lvert z \rvert) \leq \Phi^{-1}(1) \lvert z \rvert^{\nicefrac{1}{q}},
\end{equation*}
and, by \eqref{eq:ineq_for_class_P_transformed} and \eqref{eq:behavior_Phi^-1*tildePhi^-1}, that, for $\lvert z \rvert \geq 1$,
\begin{equation*}
\frac{\Phi^{-1}(\lvert z \rvert)}{\lvert 1+ z \rvert} \leq \frac{\Phi^{-1}(\lvert z \rvert)}{\lvert z \rvert} \leq \frac{2}{\widetilde{\Phi}^{-1}(\lvert z \rvert)} \leq \frac{2}{\widetilde{\Phi}^{-1}(1)} \lvert z \rvert^{-\nicefrac{1}{p'}}.  \text{\qedhere}
\end{equation*}
\end{enumerate}
\end{proof}

Recall \Cref{example:class_P} of Young functions of class $\mathcal{P}$. While Example (iii) is only useful when $A$ is a multiplication operator, Examples (ii) and (iv) yield Young functions $\Phi$ which satisfy \Cref{assump:Phi^-1_holomorph}. Example (i) tells us how to construct further examples of class $\mathcal{P}$, e.g. $\rho(t)=t^{r} + \log(t)$, $r \in [0,1]$, yields $\Phi \in \mathcal{P}$ via \eqref{eq:Phi^-1calssP} for any choice of $1<p<q<\infty$. However, in general it is not clear whether this construction leads to functions satisfying \Cref{assump:Phi^-1_holomorph} again.

%

\begin{lemma}\label{lem:measfunccalcest}
Suppose that $A$ generates a bounded analytic semigroup. If either
\begin{enumerate}[label=$\bullet$]
\item $A$ is a multiplication operator with $\sigma(-A)\subseteq[0,\infty)$ or 
\item \Cref{assump:Phi^-1_holomorph} holds and $\Phi \in \mathcal{P}$,
\end{enumerate}
then we have that
\begin{equation*}
\sup_{t > 0}~ ( \Phi^{-1}(\tfrac{1}{t}) )^{-1} ~\lVert  \Phi^{-1}(-A) T(t) \rVert < \infty.
\end{equation*}
\end{lemma}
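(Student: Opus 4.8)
The plan is to realise the operator as a single function of $-A$ and reduce the operator-norm bound to a scalar estimate. Since $T(t)=\e^{-t(-A)}$, the product rule for the respective functional calculus gives $\Phi^{-1}(-A)T(t)=g_t(-A)$, where $g_t(z)=\Phi^{-1}(z)\e^{-tz}$. I treat the two bullets separately, in each case aiming to dominate $\lVert g_t(-A)\rVert$ by a constant multiple of $\Phi^{-1}(\tfrac1t)$, with the constant independent of $t$.

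In the multiplication operator case (first bullet), $g_t(-A)$ is again a multiplication operator, so its norm equals the essential supremum of its symbol and, using $\sigma(-A)\subseteq[0,\infty)$, is dominated by $\sup_{\lambda\ge0}\Phi^{-1}(\lambda)\e^{-t\lambda}$. The key observation is that $\Phi^{-1}$, being the inverse of the convex increasing $\Phi$ with $\Phi^{-1}(0)=0$, is concave and increasing; hence $\Phi^{-1}(s u)\le\max(1,s)\,\Phi^{-1}(u)$ for all $s,u>0$. Applying this with $s=\lambda t$ and $u=\tfrac1t$ gives $\Phi^{-1}(\lambda)\le\max(1,\lambda t)\,\Phi^{-1}(\tfrac1t)$, so that
\[
\sup_{\lambda\ge0}\Phi^{-1}(\lambda)\e^{-t\lambda}\le\Phi^{-1}(\tfrac1t)\sup_{\lambda\ge0}\max(1,\lambda t)\e^{-t\lambda}=\Phi^{-1}(\tfrac1t)\sup_{u\ge0}\max(1,u)\e^{-u},
\]
and the last supremum is a finite universal constant. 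Note that this step uses nothing beyond $\Phi$ being a Young function.

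For the holomorphic functional calculus case (second bullet), I would represent $g_t(-A)$ by the Cauchy integral
\[
g_t(-A)=\frac{1}{2\pi\iu}\int_{\partial S_\nu}g_t(z)\,R(z,-A)\d z,\qquad \omega_A<\nu<\delta.
\]
This is legitimate because $g_t\in\H_0^\infty(S_\delta)$: along the sector $\e^{-tz}$ decays exponentially as $\lvert z\rvert\to\infty$ (since $\real z>0$ there), while near $0$ one has $\lvert\Phi^{-1}(z)\rvert\le m_1\Phi^{-1}(\lvert z\rvert)\lesssim\lvert z\rvert^{\nicefrac1q}$ by \Cref{assump:Phi^-1_holomorph} together with $\Phi\in\mathcal{P}$, so $g_t$ enjoys the polynomial decay at $0$ and $\infty$ that the regular calculus requires. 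Parametrising $\partial S_\nu$ by $z=r\e^{\pm\iu\nu}$, using the sectoriality estimate $\lVert R(z,-A)\rVert\le M/\lvert z\rvert$ and $\lvert g_t(z)\rvert\le m_1\Phi^{-1}(r)\e^{-tr\cos\nu}$, the problem reduces to bounding $\int_0^\infty\Phi^{-1}(r)\,\e^{-tr\cos\nu}\,\tfrac{\d r}{r}$. I then apply the class $\mathcal{P}$ scaling \eqref{eq:ineq_for_class_P_direct} in the form $\Phi^{-1}(r)\le\max((rt)^{\nicefrac1p},(rt)^{\nicefrac1q})\,\Phi^{-1}(\tfrac1t)$ and substitute $u=rt$, which factors out $\Phi^{-1}(\tfrac1t)$ and leaves the $t$-independent integral $\int_0^\infty\max(u^{\nicefrac1p},u^{\nicefrac1q})\,\e^{-u\cos\nu}\,\tfrac{\d u}{u}$; this converges because $\nicefrac1q-1>-1$ controls the singularity at $0$ while the exponential controls the behaviour at $\infty$.

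The routine parts are the two scalar estimates, handled cleanly by concavity in the first case and by the class $\mathcal{P}$ inequalities in the second. I expect the main obstacle to be the justification of the Cauchy integral representation in the second case: verifying that $g_t$ belongs to $\H_0^\infty(S_\delta)$ so that the regular holomorphic functional calculus (which needs only sectoriality, not a bounded $\H^\infty$-calculus) returns a bounded operator given by the contour integral, and checking that every constant entering the final bound depends on $\nu,p,q,m_1$ and the sectoriality constant but not on $t$.
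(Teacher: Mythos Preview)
Your proposal is correct and essentially matches the paper's proof. The holomorphic case is identical: Cauchy integral, sectoriality bound, substitution $u=rt$, then the class~$\mathcal{P}$ inequality \eqref{eq:ineq_for_class_P_direct} to factor out $\Phi^{-1}(\tfrac1t)$ and leave a $t$-independent convergent integral. In the multiplication operator case your argument is a slight repackaging of the paper's: the paper splits at $s=\tfrac1t$, using that $s\mapsto\Phi^{-1}(s)/s$ is decreasing to handle $s\ge\tfrac1t$ and monotonicity of $\Phi^{-1}$ for $s\le\tfrac1t$, whereas you encode both halves in the single concavity inequality $\Phi^{-1}(su)\le\max(1,s)\Phi^{-1}(u)$; the two are equivalent and yield the same constant.
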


\begin{proof}
Let $t>0$ and $f(s) \coloneqq \Phi^{-1}(s) \e^{-st}$. If $A$ is a multiplication operator, then $f(-A)=\Phi^{-1}(-A) T(t)$ holds and $\lVert f(-A) \rVert \leq \sup_{s\geq 0} f(s)$. First, note that $s \mapsto s \e^{-st}$ attains its maximum at $s=\frac{1}{t}$ and $s \mapsto \frac{\Phi^{-1}(s)}{s}$ is decreasing, since $\Phi^{-1}$ is concave. Hence, for $s \geq \frac{1}{t}$ we conclude that
\begin{equation*}
f(s) = \frac{\Phi^{-1}(s)}{s} \cdot s \e^{-st} \leq \frac{\Phi^{-1}\left(\frac{1}{t}\right)}{\frac{1}{t}} \cdot \tfrac{1}{t} \e^{-1} = f\left(\tfrac{1}{t}\right)
\end{equation*}
and $\sup_{s \geq 0} f(s)= \sup_{s \in [0,\frac{1}{t}]} f(s)$. Monotonicity of $\Phi^{-1}$ yields that $\sup_{s \in [0,\frac{1}{t}]} \Phi^{-1}(s) \\ = \Phi^{-1}(\tfrac{1}{t})$. For $s \in [0,\frac{1}{t}]$ it follows that
\begin{equation*}
\Phi^{-1}\left(\tfrac{1}{t}\right)~ \e^{-1}
\leq \sup_{s \in[0,\frac{1}{t}]} \Phi^{-1}(s)~ \e^{-st}
\leq \Phi^{-1}\left(\tfrac{1}{t}\right).
\end{equation*}
Thus, there exists $c \in [e^{-1},1]$ with $\sup_{s \geq 0}f(s)= c ~\Phi^{-1}(\tfrac{1}{t})$ and the assertion follows.

Let \Cref{assump:Phi^-1_holomorph} hold and let $\Phi$  be in $\mathcal{P}$. Let $\omega, \delta, m_1$ be as in \Cref{assump:Phi^-1_holomorph}, choose $\delta' \in (\omega,\delta)$ and take $\Gamma=\partial S_{\delta'}$ orientated positively. Then, 
\begin{align*}
\Vert \Phi^{-1}(-A)T(t) \rVert
&\leq \frac{m_1}{2 \pi} \int_\Gamma \Phi^{-1}(\lvert z \rvert) \e^{-\real z t} \lVert R(z,-A) \rVert \d z\\
&\leq \frac{m_1 M_{\delta'}}{2 \pi} \int_\Gamma \frac{\Phi^{-1}(\lvert z \rvert)}{\lvert z \rvert} \e^{- \real z t} \d z\\
&= \frac{m_1 M_{\delta'}}{ \pi} \int_0^\infty \frac{\Phi^{-1}(r)}{r} \e^{- r \cos(\delta') t} \d r\\
&\overset{s=rt}{=} \frac{m_1 M_{\delta'}}{\pi} \int_0^\infty \frac{\Phi^{-1}(\frac{s}{t})}{s} \e^{-s \cos(\delta')} \d s\\
& \leq \Phi^{-1}\left(\tfrac{1}{t}\right) ~\frac{m_1 M_{\delta'}}{\pi} \int_0^\infty \max(s^{\nicefrac{1}{p} -1},s^{\nicefrac{1}{q} -1})~ \e^{-s \cos(\delta')} \d s
\end{align*}
where we used \eqref{eq:ineq_for_class_P_direct} in the last step. Since the last integral converges, the proof is complete.
\end{proof}

\begin{remark}\label{rem:class_P_for_singulaity_in_0}
We want to point out that $\Phi \in \mathcal{P}$ is only needed to guarantee $\Phi^{-1}(-A) \in \Lb(D(A),X)$ and to deal with the singularity of the integrand at $0$. If we consider the integral over $(\varepsilon,\infty)$ with $\varepsilon \in (0,1]$ we derive the estimate
\begin{equation*}
\int_\varepsilon^\infty \frac{\Phi^{-1}(\frac{s}{t})}{s} \e^{-s \cos(\delta')} \d s \leq  \frac{\Phi^{-1}\left( \frac{1}{t} \right)}{\varepsilon} \int_\varepsilon^\infty \e^{-s \cos(\delta')} \d s,
\end{equation*}
since $s \mapsto \frac{\Phi^{-1}(\frac{s}{t})}{s}$ is decreasing and $\Phi^{-1}$ is increasing.
\end{remark}

\begin{lemma}\label{lem:auxlemma1}
Suppose that $A$ generates a bounded analytic semigroup. If either
\begin{enumerate}[label=$\bullet$]
\item $A$ is a multiplication operator with $\sigma(-A)\subseteq[0,\infty)$ or 
\item \Cref{assump:Phi^-1_holomorph} holds and $\Phi \in \mathcal{P}$
\end{enumerate} 
and if $\Phi^{-1}(-A)$ is $\Lp_\Phi$-admissible, then for every $\tau>0$ there exists $c_{\tau}>0$ such that
\begin{equation}\label{lem:auxlemma1eq}
\lVert t \Phi^{-1}(\tfrac{1}{t}) T(t)Ax \rVert_{\Lp_\Phi(0,\tau;X)} \leq c_{\tau} \lVert x \rVert
\end{equation}
holds for all $x \in D(A)$.\\
If $\Phi^{-1}(-A)$ is infinite-time $\Lp_\Phi$-admissible, then \eqref{lem:auxlemma1eq} holds for $\tau=\infty$ and $c_\infty<\infty$.
\end{lemma}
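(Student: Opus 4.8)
The plan is to realise $t\Phi^{-1}(\tfrac1t)T(t)Ax$ as a \emph{uniformly bounded} operator applied to $\Phi^{-1}(-A)T(\tfrac t2)x$, whose $\Lp_\Phi$-norm is controlled by the assumed admissibility of $\Phi^{-1}(-A)$. Setting $g(z)=z/\Phi^{-1}(z)$ and using multiplicativity of the respective functional calculus together with $T(t)=T(\tfrac t2)T(\tfrac t2)$, one has for $x\in D(A)$ the identity
\[
T(t)Ax = -\,g(-A)T(\tfrac t2)\,\Phi^{-1}(-A)T(\tfrac t2)x,
\]
since $g(z)\Phi^{-1}(z)=z$ and all the operators commute. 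First I would record this factorisation, noting that it is well defined: $T(\tfrac t2)x$ lies in $D(A^{\infty})$, so $\Phi^{-1}(-A)T(\tfrac t2)x$ is meaningful, and $g(-A)T(\tfrac t2)$ is bounded (the exponential decay of $T(\tfrac t2)$ compensates the sublinear growth of $g$ at infinity, while $g(z)\to0$ as $z\to0$).

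The key step, and the main obstacle, is to prove that
\[
K:=\sup_{t>0}\; t\,\Phi^{-1}(\tfrac1t)\,\bigl\lVert g(-A)T(\tfrac t2)\bigr\rVert<\infty .
\]
In the multiplication-operator case $-A=M_a$, $a\ge0$, so $\lVert g(-A)T(\tfrac t2)\rVert=\sup_{a\ge0}\tfrac{a}{\Phi^{-1}(a)}\e^{-at/2}$ and the quantity reduces to $\sup_{a,t>0} t\Phi^{-1}(\tfrac1t)\tfrac{a}{\Phi^{-1}(a)}\e^{-at/2}$; substituting $b=at$ and using that $\Phi^{-1}$ is concave with $\Phi^{-1}(0)=0$ (hence $\Phi^{-1}(\lambda s)\le\max(1,\lambda)\Phi^{-1}(s)$) bounds it by $\sup_{b>0}\max(b,1)\e^{-b/2}<\infty$, and $\Phi\in\mathcal P$ is not needed here. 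In the holomorphic case I would estimate $g(-A)T(\tfrac t2)$ by its Cauchy integral over $\Gamma=\partial S_{\delta'}$ with $\delta'\in(\omega_A,\delta)$, exactly along the lines of the proof of \Cref{lem:measfunccalcest}: bound $\lvert g(z)\rvert\le\lvert z\rvert/(m_0\Phi^{-1}(\lvert z\rvert))$ using \Cref{assump:Phi^-1_holomorph}, use $\lVert R(z,-A)\rVert\le M_{\delta'}/\lvert z\rvert$ (whence the factor $\lvert z\rvert$ cancels), and after the substitution $s=r t/2$ invoke the class-$\mathcal P$ inequality \eqref{eq:ineq_for_class_P_transformed} to extract $\Phi^{-1}(\tfrac1t)$; the remaining integral $\int_0^\infty \min\!\big((2s)^{\nicefrac1p},(2s)^{\nicefrac1q}\big)^{-1}\e^{-s\cos\delta'}\d s$ converges, the singularity at $0$ being integrable since $\nicefrac1p<1$.

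Granting $K<\infty$, the factorisation gives the pointwise bound $\lVert t\Phi^{-1}(\tfrac1t)T(t)Ax\rVert_X\le K\,\lVert\Phi^{-1}(-A)T(\tfrac t2)x\rVert_X$. Taking $\Lp_\Phi(0,\tau)$-norms and using monotonicity of the Luxemburg norm under pointwise domination of the $X$-norm, it remains to compare $w(\cdot/2)$ with $w$, where $w(s)=\Phi^{-1}(-A)T(s)x$. For this I would establish the elementary time-dilation estimate $\lVert w(\cdot/2)\rVert_{\Lp_\Phi(0,\tau)}\le 2\,\lVert w\rVert_{\Lp_\Phi(0,\tau)}$, valid for \emph{any} Young function: the substitution $s=t/2$ produces a factor $2$, which is absorbed by convexity $\Phi(\tfrac12\sigma)\le\tfrac12\Phi(\sigma)$ after passing to the scale $k=2\lVert w\rVert_{\Lp_\Phi}$; crucially this uses no $\Delta_2$-condition, so it covers the multiplication case as well. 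Combining everything,
\[
\lVert t\Phi^{-1}(\tfrac1t)T(\cdot)Ax\rVert_{\Lp_\Phi(0,\tau;X)}\le 2K\,\lVert\Phi^{-1}(-A)T(\cdot)x\rVert_{\Lp_\Phi(0,\tau;X)}\le 2K\gamma_\tau\lVert x\rVert,
\]
where $\gamma_\tau$ is the admissibility constant of $\Phi^{-1}(-A)$. Since $K$ is independent of $\tau$, choosing $\tau=\infty$ together with $\gamma_\infty<\infty$ yields the infinite-time statement with $c_\infty=2K\gamma_\infty<\infty$.
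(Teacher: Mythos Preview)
Your proposal is correct and follows essentially the same approach as the paper: both factor $T(t)Ax=-\bigl(g(-A)T(\tfrac t2)\bigr)\,\Phi^{-1}(-A)T(\tfrac t2)x$ with $g(z)=z/\Phi^{-1}(z)$, bound $t\Phi^{-1}(\tfrac1t)\,g(-A)T(\tfrac t2)$ uniformly in $t$ via the functional calculus (sup-norm for the multiplication case, Cauchy integral plus the class-$\mathcal P$ inequalities for the holomorphic case), and then invoke the admissibility of $\Phi^{-1}(-A)$. Your treatment of the time-dilation $\lVert w(\cdot/2)\rVert_{\Lp_\Phi(0,\tau)}\le 2\lVert w\rVert_{\Lp_\Phi(0,\tau)}$ is in fact more explicit than the paper, which simply says ``apply the admissibility estimate to the second part'' without spelling out the change of variable.
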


\begin{proof}
Define $f:[0,\infty) \rightarrow [0,\infty)$ by $f(s) = \frac{s}{\Phi^{-1}(s)} \e^{- \nicefrac{st}{2}}$ for $s>0$ and $f(0)=0$. Similar to \Cref{lem:measfunccalcest} we show that $t \Phi^{-1}(\tfrac{1}{t}) f(-A)$ is uniformly bounded in $t> 0$.\\
First, suppose that $A$ is a multiplication operator. Note that $f$ attains its maximum in $[0,\frac{2}{t}]$, since
\begin{equation*}
f(s) = \frac{1}{\Phi^{-1}(s)} \cdot s\e^{-\nicefrac{st}{2}} \leq f(\tfrac{2}{t})
\end{equation*}
holds for $s \geq \frac{2}{t}$. The function $s \mapsto \frac{s}{\Phi^{-1}(s)}$ is increasing. Hence, for $s \in [0,\frac{2}{t}]$ we obtain that $f(s) \leq \frac{2}{t\Phi^{-1}(\frac{2}{t})} \leq \frac{2}{t\Phi^{-1}(\frac{1}{t})} $, where we used the monotonicity of $\Phi^{-1}$ in the last inequality. This shows that
\[ \lVert f(-A) \rVert \leq \sup_{s \geq 0} \lvert f(s) \rvert \leq \frac{2}{t \Phi^{-1}(\tfrac{1}{t})} \]
and hence the claimed uniform boundedness follows. \\
Second, consider \Cref{assump:Phi^-1_holomorph} and $\Phi \in \mathcal{P}$. Let $\delta$ and $m_0$ be given as in \Cref{assump:Phi^-1_holomorph}. Choose $\delta' \in (\omega_A,\delta)$ and let $\Gamma=S_{\delta'}$ be orientated positively. Then,
\begin{align*}
\lVert f(-A) \rVert 
&\leq \frac{1}{2 \pi m_0} \int_\Gamma \frac{\lvert z \rvert}{\Phi^{-1}(\lvert z \rvert)} \e^{-\real z \, \nicefrac{t}{2}} \lVert R(z,-A) \rVert \d z\\
&\leq \frac{M_{\delta'}}{2 \pi m_0} \int_\Gamma \frac{\e^{-\real z \, \nicefrac{t}{2}}}{\Phi^{-1}(\lvert z \rvert)}  \d z\\
&=\frac{M_{\delta'}}{\pi m_0} \int_0^\infty \frac{\e^{- \cos(\delta') r \, \nicefrac{t}{2}}}{\Phi^{-1}(r)}  \d r\\
&\overset{s=rt}{=} \frac{M_{\delta'}}{\pi m_0} \int_0^\infty \frac{\e^{- \cos(\delta') \, \nicefrac{s}{2}}}{t \Phi^{-1}(\frac{s}{t})}  \d s\\
&\leq \frac{1}{t \Phi^{-1}(\frac{1}{t})} \frac{M_{\delta'}}{\pi m_0} \int_0^\infty \max(s^{-\nicefrac{1}{p}},s^{-\nicefrac{1}{q}})~ \e^{- \cos(\delta') \, \nicefrac{s}{2}} \d s
\end{align*}
by \eqref{eq:ineq_for_class_P_transformed}. Since the last integral converges, $t \Phi^{-1}(\tfrac{1}{t}) f(-A)$ is uniformly bounded in $t$.\\
Next, we use the following identity for $x\in D(A)$,
\begin{equation*}
t \Phi^{-1}(\tfrac{1}{t}) T(t)Ax= -t \Phi^{-1}(\tfrac{1}{t}) f(-A) ~ \Phi^{-1}(-A)T(\tfrac{t}{2}) x.
\end{equation*}
While the first part is uniformly bounded, we can apply the admissibility estimate from \Cref{def:adm} to the second part to obtain the desired estimate.\\
Note that we can decompose the operator in the above way by the properties of the functional calculus. Indeed, $f(-A)$ is bounded and $\ran T(\tfrac{t}{2}) \subseteq D(A) \subseteq D(\Phi^{-1}(-A))$ for all $t >0$, where the first inclusion is a known fact for bounded analytic semigroups.
\end{proof}

\begin{corollary}\label{cor:tCT(t)AxinLPhi}
Suppose that $A$ generates a bounded analytic semigroup. If either
\begin{enumerate}[label=$\bullet$]
\item $A$ is a multiplication operator with $\sigma(-A)\subseteq[0,\infty)$ or 
\item \Cref{assump:Phi^-1_holomorph} holds and $\Phi \in \mathcal{P}$
\end{enumerate} 
and if $\Phi^{-1}(-A)$ is $\Lp_\Phi$-admissible and $C \in \Lb(D(A),Y)$ satisfies 
\[\sup_{t >0} ~ (\Phi^{-1}(\tfrac{1}{t}))^{-1} \lVert C (\e^{-\beta t} T(t)) \rVert < \infty\]
for some $\beta \geq 0$, then for every $\tau>0$ there exist constants $c_{\tau}, K_{\tau}>0$ such that
\begin{equation}\label{cor:tCT(t)AxinLPhieq}
\lVert t C (\e^{-\beta t}T(t)) (A-\beta)x \rVert_{\Lp_\Phi(0,\tau;X)} \leq \left(c_{\tau} + K_{\tau} \beta \right)  \lVert x \rVert
\end{equation}
holds for all $x \in D(A)$.\\
If $\Phi^{-1}(-A)$ is infinite-time $\Lp_\Phi$-admissible, then \eqref{cor:tCT(t)AxinLPhieq} holds for $\tau=\infty$ and $c_\infty<\infty$.
\end{corollary}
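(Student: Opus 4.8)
The plan is to reduce everything to the rescaled semigroup $S_\beta(t):=\e^{-\beta t}T(t)$, whose generator is $A-\beta$, and to exploit the semigroup law together with the pointwise bound furnished by the hypothesis on $C$. Writing $M:=\sup_{t>0}(\Phi^{-1}(\tfrac1t))^{-1}\lVert C S_\beta(t)\rVert<\infty$, the starting point is the identity, valid for $x\in D(A)$,
\[
t\,C S_\beta(t)(A-\beta)x = t\,\bigl(C S_\beta(\tfrac t2)\bigr)\,\e^{-\beta t/2}\bigl(T(\tfrac t2)Ax-\beta\,T(\tfrac t2)x\bigr),
\]
obtained by splitting $S_\beta(t)=S_\beta(\tfrac t2)S_\beta(\tfrac t2)$ and using $S_\beta(\tfrac t2)(A-\beta)x=\e^{-\beta t/2}(T(\tfrac t2)Ax-\beta T(\tfrac t2)x)$. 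The purpose of the split is that the hypothesis gives $\lVert C S_\beta(\tfrac t2)\rVert\le M\,\Phi^{-1}(\tfrac2t)\le 2M\,\Phi^{-1}(\tfrac1t)$, the last step using concavity of $\Phi^{-1}$ with $\Phi^{-1}(0)=0$. Hence, pointwise in $t$,
\[
\lVert t\,C S_\beta(t)(A-\beta)x\rVert\le 2M\,\e^{-\beta t/2}\Bigl(t\,\Phi^{-1}(\tfrac1t)\lVert T(\tfrac t2)Ax\rVert+\beta\,t\,\Phi^{-1}(\tfrac1t)\lVert T(\tfrac t2)x\rVert\Bigr),
\]
and I would take the $\Lp_\Phi(0,\tau)$-norm and treat the two summands separately.

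For the first summand I would discard the factor $\e^{-\beta t/2}\le1$ by monotonicity of the Luxemburg norm and then reduce to \Cref{lem:auxlemma1}. Setting $h(s):=s\,\Phi^{-1}(\tfrac1s)T(s)Ax$, one has $t\,\Phi^{-1}(\tfrac1t)T(\tfrac t2)Ax=\tfrac{2\Phi^{-1}(1/t)}{\Phi^{-1}(2/t)}h(\tfrac t2)$, so that monotonicity of $\Phi^{-1}$ yields the pointwise bound $\lVert t\,\Phi^{-1}(\tfrac1t)T(\tfrac t2)Ax\rVert\le 2\lVert h(\tfrac t2)\rVert$. It then remains to compare $\lVert h(\tfrac\cdot2)\rVert_{\Lp_\Phi(0,\tau)}$ with $\lVert h\rVert_{\Lp_\Phi(0,\tau)}$: since $\int_0^\tau\Phi(\lVert h(\tfrac t2)\rVert/k)\d t=2\int_0^{\tau/2}\Phi(\lVert h(s)\rVert/k)\d s$ and $\Phi(\tfrac12u)\le\tfrac12\Phi(u)$ by convexity with $\Phi(0)=0$, one obtains $\lVert h(\tfrac\cdot2)\rVert_{\Lp_\Phi(0,\tau)}\le2\lVert h\rVert_{\Lp_\Phi(0,\tau)}$. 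Combined with \eqref{lem:auxlemma1eq} this bounds the first summand by a constant multiple of $c_\tau\lVert x\rVert$, uniformly in $\beta\ge0$ and valid for $\tau=\infty$ as well.

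For the second summand I would estimate $\lVert T(\tfrac t2)x\rVert\le M_T\lVert x\rVert$ with $M_T:=\sup_{t\ge0}\lVert T(t)\rVert<\infty$ (the semigroup being bounded), reducing it to $2MM_T\,\beta\,\lVert t\,\Phi^{-1}(\tfrac1t)\e^{-\beta t/2}\rVert_{\Lp_\Phi(0,\tau)}\lVert x\rVert$. In either regime $t\,\Phi^{-1}(\tfrac1t)$ is bounded near $0$, because $t\,\Phi^{-1}(\tfrac1t)\le 2/\widetilde\Phi^{-1}(\tfrac1t)\to0$ as $t\to0$ by \eqref{eq:behavior_Phi^-1*tildePhi^-1}. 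For finite $\tau$ I would drop $\e^{-\beta t/2}\le1$; then $t\,\Phi^{-1}(\tfrac1t)$ is bounded on all of $(0,\tau)$ and hence lies in $\Lp_\Phi(0,\tau)$, which produces exactly the term $K_\tau\beta\lVert x\rVert$ with $K_\tau$ a constant multiple of $\lVert t\,\Phi^{-1}(\tfrac1t)\rVert_{\Lp_\Phi(0,\tau)}$. For $\tau=\infty$ this function is no longer integrable, and here the factor $\e^{-\beta t/2}$ retained from the split becomes indispensable: it forces $t\,\Phi^{-1}(\tfrac1t)\e^{-\beta t/2}$ to decay exponentially as $t\to\infty$, hence to lie in $\Lp_\Phi(0,\infty)$ for every $\beta>0$ (for $\beta=0$ the summand vanishes), and a rescaling of the integration variable shows the product with $\beta$ stays finite, so that \eqref{cor:tCT(t)AxinLPhieq} holds with $\tau=\infty$ and $c_\infty<\infty$.

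The main difficulty, I expect, is not the operator-theoretic splitting but its interaction with the Luxemburg norm. Unlike on $\Lp^p$, the Orlicz norm is not homogeneous under the dilation $t\mapsto\tfrac t2$, so the comparison $\lVert h(\tfrac\cdot2)\rVert_{\Lp_\Phi}\le2\lVert h\rVert_{\Lp_\Phi}$ cannot be read off a change of variables but must be extracted from the convexity inequality $\Phi(\tfrac12u)\le\tfrac12\Phi(u)$; similarly, membership of $t\,\Phi^{-1}(\tfrac1t)\e^{-\beta t/2}$ in $\Lp_\Phi$ near $0$ and, for $\tau=\infty$, near $\infty$ has to be verified by hand from the two-sided bound \eqref{eq:behavior_Phi^-1*tildePhi^-1} rather than by an $\Lp^p$-type computation. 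All of the finer class-$\mathcal P$ information is already encapsulated in \Cref{lem:auxlemma1}, so beyond these two points the argument relies only on general properties of Young functions, namely monotonicity and convexity of $\Phi$ and concavity of $\Phi^{-1}$.
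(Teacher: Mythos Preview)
Your proof is correct and follows essentially the same route as the paper's: the same semigroup splitting $S_\beta(t)=S_\beta(\tfrac t2)S_\beta(\tfrac t2)$, the same appeal to \Cref{lem:auxlemma1} for the $A$-term, and the same boundedness argument for the $\beta$-term. You are in fact more careful than the paper about the dilation $t\mapsto\tfrac t2$ in the Luxemburg norm (which the paper absorbs into its constants without comment) and about the $\tau=\infty$ case of the $\beta$-term, the latter being harmless extra work since the corollary is only invoked with $\beta=0$ in the infinite-time setting.
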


\begin{proof}
For $x \in D(A)$ we write
\[ tC(\e^{-\beta t} T(t))Ax= (\Phi^{-1}(\tfrac{1}{t}))^{-1} C (\e^{-\beta \, \nicefrac{t}{2}} T(\tfrac{t}{2})) ~ t \Phi^{-1}(\tfrac{1}{t}) (\e^{-\beta \, \nicefrac{t}{2}}T(\tfrac{t}{2}))(A-\beta)x.\]
Since $(\Phi^{-1}(\tfrac{1}{t}))^{-1} C (\e^{-\beta \, \nicefrac{t}{2}}T(\tfrac{t}{2}))$ is uniformly bounded by the assumptions it suffices to estimate $t \Phi^{-1}(\tfrac{1}{t}) (\e^{-\beta \, \nicefrac{t}{2}}T(\tfrac{t}{2}))(A-\beta)x$. It follows from \Cref{lem:auxlemma1} that
\[ \lVert t \Phi^{-1}(\tfrac{1}{t}) (\e^{-\beta \, \nicefrac{t}{2}} T(\tfrac{t}{2}))Ax \rVert_{\Lp_\Phi(0,\tau;X)} \leq 2 \lVert \tfrac{t}{2}  ~\Phi^{-1}(\tfrac{2}{t}) T(\tfrac{t}{2})Ax \rVert_{\Lp_\Phi(0,\tau;X)} \leq c_{\tau} \lVert x \rVert.\]
for some $c_{\tau}$ which is uniformly bounded in $\tau$ if $\Phi^{-1}(-A)$ is infinite-time $\Lp_\Phi$-admissible. Since the semigroup is bounded and $t \mapsto t \Phi^{-1}(\tfrac{1}{t})$ is bounded on $[0,\tau]$ there exists a constant $\widetilde{K}_\tau>0$ such that
\[ \lVert t \Phi^{-1}(\tfrac{1}{t}) \e^{- \beta \, \nicefrac{t}{2}} T( \tfrac{t}{2}) \beta \rVert \leq \beta \widetilde{K}_\tau .\]
A straight-forward estimate of the Orlicz norm completes the proof.
\end{proof}


We briefly introduce the weak Orlicz space $\Lp_{\Phi,\infty} = \Lp_{\Phi,\infty}(0,\infty;Y)$ which consists of all measurable functions $f:[0,\infty) \rightarrow Y$ (modulo functions that are zero almost everywhere) such that
\[\lVert f \rVert_{\Lp_{\Phi,\infty}} \coloneqq \sup_{t\geq 0} (\Phi^{-1}(\tfrac{1}{t}))^{-1} ~ f^*(t) < \infty, \]
where $f^*$ denotes the decreasing rearrangement of $f$,
\begin{align*}
f^*(t) 
&\coloneqq \inf \{ s \geq 0 \mid \lambda(\{ \omega \in [0,\infty) \mid \lVert f(\omega) \rVert >s \}) <t\}\\
&=\inf \{ s \geq 0 \mid \lambda([\lVert f \rVert >s]) <t\}
\end{align*}
with the abbreviation $[g>s] \coloneqq \{ \omega \in[0,\infty) \mid g(\omega)>s\}$ for any function $g$ on $[0,\infty)$. As usually, we just write $\Lp_{\Phi,\infty}(0,\infty)$ if $Y=\C$. The reader is referred to \cite{liu2013weak,montgomery2011orlicz} for more on weak Orlicz spaces and Orlicz-Lorentz spaces.

\begin{theorem}\label{thm:equivalence_resolventest&semigroupest}
If $A$ generates a bounded analytic semigroup $(T(t))_{t \geq 0}$ and if $\Phi \in \mathcal{P}$, then the following statements are equivalent for $C \in \Lb(D(A),Y)$
\begin{enumerate}[label= \rm (\roman*)]
\item The infinite-time $\Phi$-Weiss condition holds, i.e. \eqref{eq:Phi_Weiss_condition} holds with $\alpha=0$, 
\item $\sup_{t>0} ( \Phi^{-1}(\tfrac{1}{t}) )^{-1} ~ \lVert CT(t) x \rVert \leq M \lVert x \rVert$ for some $M>0$ and all $x \in X$.
\item $C$ is infinite-time $\Lp_{\Phi,\infty}$-admissible.
\end{enumerate}
\end{theorem}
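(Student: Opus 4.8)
The plan is to prove the three conditions equivalent by establishing the cyclic chain $(\mathrm{ii}) \Rightarrow (\mathrm{iii}) \Rightarrow (\mathrm{i}) \Rightarrow (\mathrm{ii})$. Throughout I would exploit that $\Phi \in \mathcal{P}$ forces $\widetilde{\Phi} \in \Delta_2^{\text{\tiny global}}$, so that by \Cref{apx:Orlicz_norm_exp_function} the weight $(\lVert \e^{-\real z \cdot}\rVert_{\Lp_{\widetilde{\Phi}}(0,\infty)})^{-1}$ appearing in \eqref{eq:Phi_Weiss_condition} may be replaced by $\widetilde{\Phi}^{-1}(\real z)$, and that by \eqref{eq:behavior_Phi^-1*tildePhi^-1} one has $\tfrac{1}{\widetilde{\Phi}^{-1}(r)} \approx \tfrac{\Phi^{-1}(r)}{r}$. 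These reduce all three conditions to statements about the single increasing, concave function $r \mapsto \Phi^{-1}(r)$. Only the implication $(\mathrm{i}) \Rightarrow (\mathrm{ii})$ will use analyticity of the semigroup; the first two implications are ``soft'' and hold for any $C_0$-semigroup.

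For $(\mathrm{ii}) \Rightarrow (\mathrm{iii})$ I would argue by rearrangement. Since $\Phi^{-1}$ is increasing, the function $t \mapsto \Phi^{-1}(\tfrac{1}{t})$ is decreasing and hence coincides with its own decreasing rearrangement. The pointwise estimate $\lVert CT(t)x\rVert \leq M \Phi^{-1}(\tfrac{1}{t})\lVert x\rVert$ from $(\mathrm{ii})$ therefore passes to the decreasing rearrangement of $t \mapsto \lVert CT(t)x\rVert$, yielding $(CT(\cdot)x)^{*}(t) \leq M \Phi^{-1}(\tfrac{1}{t})\lVert x\rVert$, which is precisely $\lVert CT(\cdot)x\rVert_{\Lp_{\Phi,\infty}} \leq M\lVert x\rVert$. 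For $(\mathrm{iii}) \Rightarrow (\mathrm{i})$ I would start from $CR(z,A)x = \int_0^\infty \e^{-zt}CT(t)x\d t$, so that $\lVert CR(z,A)x\rVert \leq \int_0^\infty \e^{-\real z\, t}\lVert CT(t)x\rVert \d t$. The Hardy--Littlewood rearrangement inequality, applied with the already decreasing factor $\e^{-\real z\,\cdot}$, bounds the right-hand side by $\int_0^\infty \e^{-\real z\, t}(CT(\cdot)x)^{*}(t)\d t$, and $(\mathrm{iii})$ gives $(CT(\cdot)x)^{*}(t) \leq c_\infty \lVert x\rVert \Phi^{-1}(\tfrac{1}{t})$. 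It then remains to verify
\begin{equation*}
\int_0^\infty \e^{-at}\Phi^{-1}(\tfrac{1}{t})\d t \lesssim \frac{\Phi^{-1}(a)}{a} \approx \frac{1}{\widetilde{\Phi}^{-1}(a)};
\end{equation*}
substituting $s=at$ and estimating $\Phi^{-1}(\tfrac{a}{s})$ by \eqref{eq:ineq_for_class_P_direct} reduces this to the convergence of $\int_0^\infty \max(s^{-\nicefrac{1}{p}},s^{-\nicefrac{1}{q}})\,\e^{-s}\d s$, which holds since $p>1$. This is exactly the infinite-time $\Phi$-Weiss condition.

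The real work is $(\mathrm{i}) \Rightarrow (\mathrm{ii})$, where boundedness and analyticity enter. Here the Weiss condition reads $\lVert CR(z,A)\rVert \lesssim 1/\widetilde{\Phi}^{-1}(\real z)$, uniformly in $\imag z$, and the goal is the pointwise bound $\lVert CT(t)\rVert \lesssim \Phi^{-1}(\tfrac{1}{t})$. I would use the inverse Laplace representation regularised by powers of the resolvent: since $R(z,A)^3$ is the Laplace transform of $\tfrac{t^2}{2}T(t)$, one has for every $a>0$
\begin{equation*}
CT(t) = \frac{1}{\pi\iu\, t^2}\int_{a-\iu\infty}^{a+\iu\infty}\e^{zt}\, CR(z,A)^3\d z ,
\end{equation*}
the integral converging in $\Lb(X,Y)$ because $CR(z,A)^3 = CR(z,A)R(z,A)^2$ and $\lVert R(z,A)\rVert \lesssim 1/\lvert z\rvert$ on $\C_0$ for a bounded analytic semigroup. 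Estimating $\lVert CR(z,A)^3\rVert \leq \lVert CR(z,A)\rVert\,\lVert R(z,A)\rVert^2 \lesssim \widetilde{\Phi}^{-1}(a)^{-1}(a^2+s^2)^{-1}$ on the line $z=a+\iu s$ gives
\begin{equation*}
\lVert CT(t)\rVert \lesssim \frac{\e^{at}}{t^2\,\widetilde{\Phi}^{-1}(a)}\int_{-\infty}^\infty \frac{\d s}{a^2+s^2} = \frac{\e^{at}}{t^2\,\widetilde{\Phi}^{-1}(a)}\cdot\frac{\pi}{a}.
\end{equation*}
Choosing $a=\tfrac{1}{t}$ and using $1/\widetilde{\Phi}^{-1}(\tfrac{1}{t}) \approx t\,\Phi^{-1}(\tfrac{1}{t})$ collapses the right-hand side to a constant multiple of $\Phi^{-1}(\tfrac{1}{t})$, which is $(\mathrm{ii})$.

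I expect the main obstacle to be the rigorous justification of this last step rather than the estimate itself: first, interchanging the merely $A$-bounded operator $C$ with the absolutely convergent $\Lb(X,Y)$-valued contour integral, which I would settle via closedness of $C$ together with $\ran R(z,A)^3 \subseteq D(A)$; and second, tracking that every implicit constant stays uniform in $t$, so that the infinite-time statements (the finiteness $c_\infty<\infty$) are genuinely preserved along the whole cycle. The use of \emph{three} resolvent factors (rather than two) is not cosmetic: it is exactly what renders $\int (a^2+s^2)^{-1}\d s$ convergent while keeping the powers of $t$ and of $\Phi^{-1}$ in balance, a single integration by parts leaving only a logarithmically divergent integral.
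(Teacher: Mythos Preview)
Your cycle $(\mathrm{ii}) \Rightarrow (\mathrm{iii}) \Rightarrow (\mathrm{i}) \Rightarrow (\mathrm{ii})$ is correct, and the first two implications are essentially identical to the paper's: the paper also passes from $(\mathrm{ii})$ to $(\mathrm{iii})$ by a distribution-function computation (your rearrangement phrasing is equivalent), and from $(\mathrm{iii})$ to $(\mathrm{i})$ via Hardy--Littlewood followed by the substitution $s=\real z\cdot t$ and the class-$\mathcal{P}$ scaling inequalities.

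The genuine difference is in $(\mathrm{i}) \Rightarrow (\mathrm{ii})$. The paper represents $T(t)$ by the sectorial Cauchy integral over $\Gamma=\partial S_{\delta'}$, which naturally produces $R(z,-A)=-R(-z,A)$; it then uses the resolvent identity to convert $\lVert CR(-z,A)\rVert \leq (1+2M)\lVert CR(z,A)\rVert$ so that the Weiss bound applies, and finally controls $\int_0^\infty (t\widetilde{\Phi}^{-1}(s/t))^{-1}\e^{-cs}\d s$ via the class-$\mathcal{P}$ inequalities \eqref{eq:ineq_for_class_P_transformed}. Your route---Bromwich inversion with $R(z,A)^3$ on a vertical line, splitting $CR(z,A)^3 = CR(z,A)\cdot R(z,A)^2$ and using $\lVert R(z,A)\rVert \lesssim \lvert z\rvert^{-1}$---is more elementary in one respect: after choosing $a=\tfrac{1}{t}$ it collapses to the closed-form integral $\int(a^2+s^2)^{-1}\d s=\pi/a$ and needs only \eqref{eq:behavior_Phi^-1*tildePhi^-1}, not the finer scaling \eqref{eq:ineq_for_class_P_direct}--\eqref{eq:ineq_for_class_P_transformed}. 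The paper's approach, in turn, avoids the Laplace-inversion bookkeeping you flag.

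One small correction on that bookkeeping: $C\in\Lb(D(A),Y)$ is \emph{not} in general closed as an unbounded operator $X\to Y$, so ``closedness of $C$'' is not the right tool. The clean fix is to note that $(1-A)R(z,A)^3=(1-z)R(z,A)^3+R(z,A)^2$ is absolutely integrable on the line $\real z=a$ as well, so the Bromwich integral converges already in $\Lb(X,D(A))$; then the bounded operator $C\colon D(A)\to Y$ can be applied termwise without any closedness argument.
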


\Cref{thm:equivalence_resolventest&semigroupest} generalizes \cite[Thm.~2.3]{Haak2012} and \cite[Lem~2.3]{hamid2010direct}. In \cite{Haak2012} the above lemma was proved for $\Phi(t)=t^2$ and our proof of ``(ii) $\Rightarrow$ (iii) $\Rightarrow$ (i)'' is based on this source. In \cite{hamid2010direct} the equivalence of (i) and (ii) was shown for $\Phi(t) = t^p$ and our proof  of (i) $\Rightarrow$ (ii) relies on the ideas of \cite{hamid2010direct}.


\begin{proof}[Proof of \Cref{thm:equivalence_resolventest&semigroupest}]
To prove (i) $\Rightarrow$ (ii) let $\delta<\frac{\pi}{2}$ be larger than $\omega$, the type of sectoriality of $A$ and choose $M>0$ such that $\lVert \lambda R(\lambda,A )\rVert \leq M$ on $S_\delta$. We assume $\Gamma = \partial S_\delta'$ to be orientated positively for $\delta' \in (\delta,\omega)$. For $z \in \Gamma$ the resolvent equation yields
\begin{equation*}
\lVert  CR(-z,A) \rVert = \lVert CR(z,A) (I + 2z R(-z,A) \rVert \leq (1+2M) \lVert CR(z,A) \rVert.
\end{equation*}
 Then, for any $t>0$ and $x \in X$, we estimate
 \begin{align*}
 \lVert CT(t)x \rVert
 &\leq \frac{1}{2 \pi} \int_\Gamma \e^{-\real z t} \lVert CR(z,-A)x \rVert \d z \\
 &\leq \frac{1+2M}{2 \pi} \int_\Gamma \e^{-\real z t} \lVert CR(z,A)x \rVert \d z \\
 &\leq M' \lVert x \rVert \int_\Gamma \frac{\e^{-\real z t}}{\widetilde{\Phi}^{-1}(\real z)} \d z 
 \end{align*}
 where $M'>0$ is a suitable constant according to our assumptions. For $z \in \Gamma$ we can write $\real z =\lvert z \rvert \cos(\delta')$ and hence,
 \begin{align*}
 \lVert CT(t)x \rVert 
 &\leq 2M' \lVert x \rVert \int_0^\infty \frac{\e^{-\cos(\delta') t r}}{\widetilde{\Phi}^{-1}(r)} \d r \\
 &\overset{s=rt}{=} 2M' \lVert x \rVert \int_0^\infty \frac{\e^{-\cos(\delta') s}}{t \widetilde{\Phi}^{-1}(\frac{s}{t})} \d s\\
 &\leq \frac{2M'}{t \widetilde{\Phi}^{-1}(\frac{1}{t})} \lVert x \rVert \int_0^\infty \max(s^{-\nicefrac{1}{q'}},s^{-\nicefrac{1}{p'}}) ~ \e^{-\cos(\delta')s} \d s \\
 &\leq 4M' \Phi^{-1}(\tfrac{1}{t}) \int_0^\infty \max(s^{-\nicefrac{1}{q'}},s^{-\nicefrac{1}{p'}}) ~ \e^{-\cos(\delta')s} \d s 
 \end{align*}
 where we applied \eqref{eq:ineq_for_class_P_transformed} and \eqref{eq:behavior_Phi^-1*tildePhi^-1}. Since the integral is finite, we are done.

Next, we prove (ii) $\Rightarrow$ (iii). Let $M$ be given as in (ii). For $x \in X$ we have that $\lambda([ \lVert CT(\cdot)x \rvert >s]) \leq \lambda([\Phi^{-1}(\tfrac{1}{\cdot}) M \lVert x \rVert])= (\Phi(\frac{s}{M \lVert x \rVert}))^{-1}$ and hence
\begin{align*}
\lVert CT(\cdot) x \rVert_{\Lp_{\Phi,\infty}(0,\infty;Y)}
&= \sup_{t > 0} (\Phi^{-1}(\tfrac{1}{t}))^{-1} (CT(\cdot)x)^*(t)\\
&\leq \sup_{t > 0} (\Phi^{-1}(\tfrac{1}{t}))^{-1} \inf\{s \geq 0 \mid (\Phi(\tfrac{s}{M \lVert x \rVert}))^{-1}<t \}\\
&= \sup_{t > 0} (\Phi^{-1}(\tfrac{1}{t}))^{-1} \Phi^{-1}(\tfrac{1}{t}) M \lVert x \rVert \\
&= M \lVert x \rVert.
\end{align*}
This shows that $C$ is infinite-time $\Lp_{\Phi,\infty}$-admissible.

To complete the proof we show (iii) $\Rightarrow$ (i). For $z \in \C_0$ be given. The function $g:[0,\infty) \rightarrow [0,\infty)$, $g(t) = \e^{-\real z t}$ is decreasing and hence $g=g^*$. Let $x \in X$ and set $f(t)=CT(t)x$. The Hardy--Littlewood inequality yields for every $z \in \C_0$ that
\begin{align*}
\lVert CR(z,A)x \rVert
&= \int_0^\infty g(t) f(t) \d t\\
&\leq \int_0^\infty t f^*(t) \frac{1}{t} g^*(t) \d t\\
&\leq 2 \int_0^\infty \frac{1}{\Phi^{-1}(\frac{1}{t})} f^*(t) \frac{g^*(t)}{t \widetilde{\Phi}^{-1}(\frac{1}{t})} \d t\\
&\leq 2 \lVert f \rVert_{\Lp_{\Phi,\infty}} \int_0^\infty \frac{\e^{- \real z t}}{t \widetilde{\Phi}^{-1}(\frac{1}{t})} \d t\\
&\overset{s=\real z t}{=} 2 \lVert f \rVert_{\Lp_{\Phi,\infty}} \int_0^\infty \frac{\e^{- s}}{s \widetilde{\Phi}^{-1}(\frac{\real z}{s})} \d s\\
&\leq \int_0^\infty \frac{\e^{-s}}{s \min(s^{-\nicefrac{1}{q'}},s^{-\nicefrac{1}{p'}}) \widetilde{\Phi}^{-1}(\real z) } \d s \\
&\leq  \frac{K \lVert f \rVert_{\Lp_{\Phi,\infty}}}{\widetilde{\Phi}^{-1}(\real z)},
\end{align*}
for some $K>0$, where we applied \eqref{eq:behavior_Phi^-1*tildePhi^-1} and \eqref{eq:ineq_for_class_P_transformed}. By assumption, $\lVert f \rVert_{\Lp_{\Phi,\infty}} \leq c_\infty \lVert x \rVert$ with admissibility constant $c_\infty< \infty$. Hence, (i) follows and the proof is complete.
\end{proof}

As a direct consequence of a rescaling argument for the semigroup we can formulate the finite-time version of \Cref{thm:equivalence_resolventest&semigroupest}.
\begin{corollary}\label{cor:equivalence_resolventest&semigroupest_infinite_time}
If $A$ generates a bounded analytic semigroup and if $\Phi \in \mathcal{P}$, then the following statements are equivalent for $C \in \Lb(D(A),Y)$
\begin{enumerate}[label= \rm (\roman*)]
\item The $\Phi$-Weiss condition \eqref{eq:Phi_Weiss_condition} holds for some $\alpha>\omega(A)$, 
\item $\sup_{t>0} ( \Phi^{-1}(\tfrac{1}{t}) )^{-1} ~ \lVert C(\e^{-\beta t}T(t)) x \rVert \leq M \lVert x \rVert$ for some $\beta>\omega(A)$, $M>0$ and all $x \in X$.
\item $C$ is $\Lp_{\Phi,\infty}$-admissible.
\end{enumerate}
In (i) and (ii) the parameters $\alpha$ and $\beta$ can be chosen the same if they are non-negative.
\end{corollary}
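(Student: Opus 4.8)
The plan is to reduce the statement to the infinite-time result \Cref{thm:equivalence_resolventest&semigroupest} applied to a shifted generator. Fix $\beta>0$ (so in particular $\beta>\omega(A)$, since a bounded analytic semigroup has $\omega(A)\le0$). Then $A-\beta$ generates the rescaled semigroup $(\e^{-\beta t}T(t))_{t\ge0}$, which is exponentially stable and, being analytic, bounded analytic; hence \Cref{thm:equivalence_resolventest&semigroupest} applies to $A-\beta$. Using $R(z,A-\beta)=R(z+\beta,A)$, the substitution $w=z+\beta$, and the remark after \Cref{def:Phi_Weiss_condition} (valid since $\Phi\in\mathcal P$ forces $\widetilde\Phi\in\Delta_2^{\text{\tiny global}}$), the three equivalent conditions of the theorem for $A-\beta$ become
\begin{enumerate}[label=$(\mathrm{\roman*}_\beta)$]
\item $\sup_{\real w>\beta}\widetilde\Phi^{-1}(\real w-\beta)\,\lVert CR(w,A)\rVert<\infty$,
\item $\sup_{t>0}(\Phi^{-1}(\tfrac1t))^{-1}\lVert C\e^{-\beta t}T(t)x\rVert\le M\lVert x\rVert$ for all $x\in X$,
\item $C$ is infinite-time $\Lp_{\Phi,\infty}$-admissible for $A-\beta$.
\end{enumerate}
Condition $(\mathrm{ii}_\beta)$ is literally condition (ii) of the corollary for this $\beta$, so it remains to match $(\mathrm{i}_\beta)$ with (i) and $(\mathrm{iii}_\beta)$ with (iii).

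For the resolvent condition I would first record that the $\Phi$-Weiss condition (i) is independent of the abscissa: if it holds for one $\alpha>\omega(A)$ it holds for every such $\alpha$. Increasing $\alpha$ only shrinks the supremum domain; to decrease it one bounds $\widetilde\Phi^{-1}(\real w)\lVert CR(w,A)\rVert$ on a strip $\{\alpha\le\real w\le\alpha_0\}$, where $\widetilde\Phi^{-1}(\real w)\le\widetilde\Phi^{-1}(\alpha_0)$ and $\lVert CR(w,A)\rVert$ is bounded, the latter because $C$ is $A$-bounded and, by sectoriality, $\lVert AR(w,A)\rVert=\lVert wR(w,A)-I\rVert$ stays bounded on the strip. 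Granting this, I compare $(\mathrm{i}_\beta)$ with (i) at $\alpha=\beta$. Monotonicity of $\widetilde\Phi^{-1}$ gives $\widetilde\Phi^{-1}(\real w-\beta)\le\widetilde\Phi^{-1}(\real w)$, so (i)$\Rightarrow(\mathrm{i}_\beta)$ at once. Conversely, on $\beta<\real w\le\beta+1$ the weight $\widetilde\Phi^{-1}(\real w)$ is bounded and $\lVert CR(w,A)\rVert$ is bounded as above; for $\real w>\beta+1$ one has $\real w<(1+\beta)(\real w-\beta)$, so the class-$\mathcal P$ estimate \eqref{eq:ineq_for_class_P_transformed} yields $\widetilde\Phi^{-1}(\real w)\le c\,\widetilde\Phi^{-1}(\real w-\beta)$, whence $(\mathrm{i}_\beta)\Rightarrow$(i).

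For the admissibility condition I would invoke the weak-Orlicz analogues of the two standard facts established for $\Lp_\Phi$ in \Cref{sec:Engel}, namely that $\Lp_{\Phi,\infty}$-admissibility is invariant under rescaling of the semigroup and that for an exponentially stable semigroup finite-time and infinite-time $\Lp_{\Phi,\infty}$-admissibility coincide. Since $A-\beta$ is exponentially stable, $(\mathrm{iii}_\beta)$ is equivalent to finite-time $\Lp_{\Phi,\infty}$-admissibility of $C$ for $A-\beta$, and by rescaling invariance to finite-time $\Lp_{\Phi,\infty}$-admissibility for $A$, i.e.\ condition (iii). Combining the three matches proves (i)$\Leftrightarrow$(ii)$\Leftrightarrow$(iii). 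The final assertion, that $\alpha$ and $\beta$ may be taken equal when non-negative, follows from the comparison at $\alpha=\beta$ carried out above (the case $\beta=0$ being trivial, as then the two weights agree).

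The main obstacle is the bookkeeping in the resolvent comparison near the boundary $\real w=\beta$, where the shifted weight $\widetilde\Phi^{-1}(\real w-\beta)$ degenerates while $\widetilde\Phi^{-1}(\real w)$ does not, together with verifying that the rescaling and the finite-/infinite-time equivalence genuinely transfer to the weak Orlicz space $\Lp_{\Phi,\infty}$.
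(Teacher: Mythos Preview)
Your argument is correct and follows precisely the route the paper indicates: the corollary is obtained from \Cref{thm:equivalence_resolventest&semigroupest} by applying it to the shifted generator $A-\beta$ and translating the three conditions back (the paper's proof is literally ``a direct consequence of a rescaling argument for the semigroup''). Your detailed bookkeeping for the resolvent weight near $\real w=\beta$ and your explicit mention of the rescaling/finite-versus-infinite-time facts needed for $\Lp_{\Phi,\infty}$ simply spell out what the paper leaves implicit.
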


The approach to the proof of d's result from \cite{hamid2010direct} uses the boundedness of the integral operator $L$, defined for $\tau>0$ by
\begin{equation}\label{eq:integralop.L}
(Lf)(t) \coloneqq \int_t^{\tau} \frac{f(s)}{s} \d s \quad 0\leq t\leq \tau
\end{equation}
on $\Lp^p(0,\tau;Y)$ with operator norm bounded by $p$, see \cite[Prop.~2.2]{hamid2010direct}. As a direct consequence of the interpolation result from \cite[Thm.~5.1]{karlovich2001interpolation} we have the following lemma.

\begin{lemma}\label{lem:integraloponLPhi}
If $\Phi \in \mathcal{P}$ and $L$ is given by \eqref{eq:integralop.L} for some $\tau>0$ then $L$ is a bounded operator on $\Lp_\Phi(0,\tau;Y)$ with operator norm independent of $\tau>0$.
\end{lemma}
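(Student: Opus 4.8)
The plan is to obtain the boundedness of $L$ on $\Lp_\Phi(0,\tau;Y)$ by interpolating between the two endpoint Lebesgue spaces $\Lp^p$ and $\Lp^q$ singled out by the assumption $\Phi \in \mathcal{P}$, using \cite[Thm.~5.1]{karlovich2001interpolation}. As a first step I would remove the vector-valuedness. The kernel $\tfrac{1}{s}\mathds{1}_{\{s>t\}}$ of $L$ is nonnegative, so for $f \in \Lp_\Phi(0,\tau;Y)$ and a.e.\ $t$ one has the pointwise bound
\[
\lVert (Lf)(t) \rVert_Y = \left\lVert \int_t^\tau \frac{f(s)}{s} \d s \right\rVert_Y \leq \int_t^\tau \frac{\lVert f(s) \rVert_Y}{s} \d s = \bigl(L\lVert f \rVert_Y\bigr)(t).
\]
Since the Luxemburg norm of a function depends only on the pointwise $Y$-norm of its argument and is monotone with respect to it, this gives $\lVert Lf \rVert_{\Lp_\Phi(0,\tau;Y)} \leq \lVert L\lVert f \rVert_Y \rVert_{\Lp_\Phi(0,\tau)}$, whence $\lVert L \rVert_{\Lp_\Phi(0,\tau;Y)} \leq \lVert L \rVert_{\Lp_\Phi(0,\tau)}$. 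It therefore suffices to treat the scalar operator.

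Next I would record the endpoint estimates. For $1 < r < \infty$ the operator $L$ is the dual Hardy operator, whose norm on $\Lp^r(0,\infty)$ equals $r$; extending functions on $(0,\tau)$ by zero shows $\lVert L \rVert_{\Lp^r(0,\tau) \to \Lp^r(0,\tau)} \leq r$ \emph{independently of $\tau$}. Applied with $r=p$ and $r=q$ this is exactly \cite[Prop.~2.2]{hamid2010direct}, and it provides the two uniform bounds $\lVert L \rVert_{\Lp^p(0,\tau)} \leq p$ and $\lVert L \rVert_{\Lp^q(0,\tau)} \leq q$.

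Finally I would invoke the interpolation result. Because $\Phi \in \mathcal{P}$ is characterized by $\Phi^{-1}(t) = t^{\nicefrac{1}{p}}\rho(t^{\nicefrac{1}{q}-\nicefrac{1}{p}})$ with $1<p<q<\infty$, the space $\Lp_\Phi(0,\tau)$ is an interpolation space for the couple $(\Lp^p(0,\tau),\Lp^q(0,\tau))$, and \cite[Thm.~5.1]{karlovich2001interpolation} supplies a constant $C$, depending only on $p$, $q$ and $\rho$, such that every linear operator $T$ bounded on both endpoints satisfies $\lVert T \rVert_{\Lp_\Phi} \leq C \max\{\lVert T \rVert_{\Lp^p},\lVert T \rVert_{\Lp^q}\}$. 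Applying this to $T=L$ and combining with the previous two steps yields $\lVert L \rVert_{\Lp_\Phi(0,\tau;Y)} \leq \lVert L \rVert_{\Lp_\Phi(0,\tau)} \leq Cq$.

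The point requiring the most care, and the main obstacle, is the asserted $\tau$-independence of the final bound. I must check that the interpolation constant furnished by \cite[Thm.~5.1]{karlovich2001interpolation} depends only on $(p,q)$ and $\rho$ (equivalently, on $\Phi$) and not on the underlying measure space $(0,\tau)$; this is where the Calderón--Mityagin property of the couple $(\Lp^p,\Lp^q)$ must be used with a constant uniform in the measure space. Together with the $\tau$-uniform endpoint norms $p$ and $q$, this delivers a bound $\lVert L \rVert_{\Lp_\Phi(0,\tau;Y)} \leq Cq$ that is uniform in $\tau>0$, as claimed.
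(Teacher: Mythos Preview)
Your proposal is correct and follows precisely the route the paper indicates: the paper states the lemma ``as a direct consequence of the interpolation result from \cite[Thm.~5.1]{karlovich2001interpolation}'' without further detail, and you have supplied exactly the missing steps—reducing to the scalar case via the positive kernel, invoking the uniform $\Lp^p$ and $\Lp^q$ bounds from \cite[Prop.~2.2]{hamid2010direct}, and then interpolating. Your identification of the $\tau$-independence of the Karlovich--Maligranda constant as the key point is apt; that constant indeed depends only on $p$, $q$ and $\rho$, not on the underlying measure space.
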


We put everything together to get our main theorem:

\begin{theorem}\label{thm:Phi_Weiss_conjecture}
Suppose that $A$ generates a bounded analytic semigroup. If $\Phi \in \mathcal{P}$ and either $A$ is a multiplication operator with $\sigma(-A)\subseteq[0,\infty)$ or \Cref{assump:Phi^-1_holomorph} holds. Then the following are equivalent
\begin{enumerate}[label=\rm (\roman*)]
\item  $\Phi^{-1}(-A)$ is (infinite-time) $\Lp_\Phi$-admissible,
\item It holds that 
\begin{equation*}
C \text{ is (infinite-time) } \Lp_\Phi \text{-admissible} \Leftrightarrow \left\{
\begin{aligned}
& C \text{ satisfies the (infinite-time) } \\
 & \Phi\text{-Weiss condition \eqref{eq:Phi_Weiss_condition}}
 \end{aligned}\right.
\end{equation*}
\end{enumerate}
\end{theorem}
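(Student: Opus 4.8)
The plan is to prove the two implications of the equivalence separately, exploiting that essentially all of the analytic work has already been isolated in the preceding results, so that what remains is to assemble them correctly while keeping careful track of the logical structure: crucially, statement (ii) is itself a biconditional, so one direction of the main equivalence is driven by a single well-chosen test operator and the other by the genuinely hard ``Weiss $\Rightarrow$ admissible'' implication.

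For (ii) $\Rightarrow$ (i) I would specialise the biconditional in (ii) to the operator $C=\Phi^{-1}(-A)$, which under the standing hypotheses is a well-defined element of $\Lb(D(A),X)$. By \Cref{lem:measfunccalcest} this operator satisfies $\sup_{t>0}(\Phi^{-1}(\tfrac1t))^{-1}\lVert\Phi^{-1}(-A)T(t)\rVert<\infty$, which is precisely condition (ii) of \Cref{thm:equivalence_resolventest&semigroupest} (respectively its rescaled version \Cref{cor:equivalence_resolventest&semigroupest_infinite_time}, using $\e^{-\beta t}\le 1$). Hence $\Phi^{-1}(-A)$ satisfies the (infinite-time) $\Phi$-Weiss condition, and the assumed equivalence in (ii), applied to this particular $C$, then forces $\Phi^{-1}(-A)$ to be (infinite-time) $\Lp_\Phi$-admissible, which is exactly (i).

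For (i) $\Rightarrow$ (ii) I assume $\Phi^{-1}(-A)$ admissible and must establish the biconditional. The direction ``admissible $\Rightarrow$ $\Phi$-Weiss'' is exactly \Cref{lem:adm_implies_Weiss_condition} and needs nothing further, so the real content is the converse. Thus let $C$ satisfy the $\Phi$-Weiss condition; by \Cref{thm:equivalence_resolventest&semigroupest}/\Cref{cor:equivalence_resolventest&semigroupest_infinite_time} this is equivalent to the semigroup estimate $\sup_{t>0}(\Phi^{-1}(\tfrac1t))^{-1}\lVert C\e^{-\beta t}T(t)\rVert\le M$ for a suitable $\beta\ge 0$. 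Writing $S(t)=\e^{-\beta t}T(t)$ and $g(s)=s\,CS(s)(A-\beta)x$ for $x\in D(A)$, \Cref{cor:tCT(t)AxinLPhi} yields $\lVert g\rVert_{\Lp_\Phi(0,\tau;Y)}\le(c_\tau+K_\tau\beta)\lVert x\rVert$. The fundamental theorem of calculus gives $CS(t)x=CS(\tau)x-\int_t^\tau CS(s)(A-\beta)x\d s=CS(\tau)x-(Lg)(t)$, where $L$ is the Hardy operator from \eqref{eq:integralop.L}. Since $\lVert\mathds{1}_{[0,\tau]}\rVert_{\Lp_\Phi}=(\Phi^{-1}(\tfrac1\tau))^{-1}$ and $\lVert CS(\tau)x\rVert_Y\le M\Phi^{-1}(\tfrac1\tau)\lVert x\rVert$, the boundary term is bounded by $M\lVert x\rVert$, while \Cref{lem:integraloponLPhi} controls $\lVert Lg\rVert_{\Lp_\Phi}\le\lVert L\rVert\,\lVert g\rVert_{\Lp_\Phi}$. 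Combining these gives $\lVert CS(\cdot)x\rVert_{\Lp_\Phi(0,\tau;Y)}\le(M+\lVert L\rVert(c_\tau+K_\tau\beta))\lVert x\rVert$, so $C$ is $\Lp_\Phi$-admissible for $S$ and hence, by invariance under rescaling, for $T$. In the infinite-time case I would instead take $\tau=\infty$ and $\beta=0$; the boundary term then vanishes since $\lVert CT(\tau)x\rVert_Y\le M\Phi^{-1}(\tfrac1\tau)\lVert x\rVert\to 0$ as $\tau\to\infty$ (because $\Phi^{-1}(0)=0$), yielding $CT(t)x=-(Lg)(t)$ directly.

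The steps I expect to be the genuine obstacles are twofold. The first is purely structural: one must remember that (ii) is an equivalence, so that (ii) $\Rightarrow$ (i) is obtained not for arbitrary $C$ but by feeding in the single test operator $C=\Phi^{-1}(-A)$ and separately checking its membership in $\Lb(D(A),X)$. The second, and analytically the heart of the matter, is the reconstruction $CS(t)x=CS(\tau)x-(Lg)(t)$ together with the control of the boundary term. Here one must justify that the Hardy-operator representation is legitimate: the pointwise identity holds for sufficiently regular $x$ (e.g.\ $x\in D(A^2)$), and the resulting $\Lp_\Phi$-estimate extends to all $x\in D(A)$ by density together with the boundedness of $L$; in the infinite-time case the convergence of $\int_t^\infty CS(s)(A-\beta)x\d s$ should be read off from $g\in\Lp_\Phi$ rather than from absolute integrability of the integrand, which would fail since $\Phi^{-1}(\tfrac1s)$ is not integrable at infinity.
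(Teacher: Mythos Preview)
Your proposal is correct and follows essentially the same route as the paper: for (ii)$\Rightarrow$(i) you specialise to $C=\Phi^{-1}(-A)$ and combine \Cref{lem:measfunccalcest} with \Cref{thm:equivalence_resolventest&semigroupest}; for (i)$\Rightarrow$(ii) you quote \Cref{lem:adm_implies_Weiss_condition} for the easy direction and, for the hard one, pass from the $\Phi$-Weiss condition to the semigroup estimate via \Cref{thm:equivalence_resolventest&semigroupest}/\Cref{cor:equivalence_resolventest&semigroupest_infinite_time}, write $CS(t)x=CS(\tau)x-(Lg)(t)$ with $g(s)=sCS(s)(A-\beta)x$, and bound the two pieces by the semigroup estimate and by \Cref{cor:tCT(t)AxinLPhi} together with \Cref{lem:integraloponLPhi}. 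This is precisely the paper's argument.

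The one place you diverge is the infinite-time case: you propose sending $\tau\to\infty$ so that the boundary term $CS(\tau)x$ vanishes pointwise, and then working directly with $L$ on $(0,\infty)$. The paper avoids this by keeping $\tau$ finite throughout and simply observing that the resulting bound $M+\lVert L\rVert c_\tau$ is uniform in $\tau$ (since $\lVert L\rVert$ is $\tau$-independent by \Cref{lem:integraloponLPhi} and $c_\tau$ is uniformly bounded by the infinite-time part of \Cref{cor:tCT(t)AxinLPhi}). That shortcut sidesteps exactly the convergence issue you flag in your last paragraph, so you may prefer it; but either variant works.
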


\begin{proof}
Since $A$ generates a bounded semigroup we have that $\omega(A) \leq 0$.\\
First, assume (ii). From \Cref{lem:measfunccalcest} and \Cref{thm:equivalence_resolventest&semigroupest} we deduce the infinite-time $\Phi$-Weiss condition (and hence the finite-time $\Phi$-Weiss condition) holds for $C=\Phi^{-1}(-A)$. Thus, $\Phi^{-1}(-A)$ is (infinite-time) $\Lp_\Phi$-admissible by (ii).\\
Second, assume (i). If $C$ is (infinite-time) $\Lp_\Phi$-admissible, then the (infinite-time) $\Phi$-Weiss property \eqref{eq:Phi_Weiss_condition} follows. This was \Cref{lem:adm_implies_Weiss_condition}.\\
It is left to prove that the (infinite-time) $\Phi$-Weiss property for $C$ implies (infinite-time) $\Lp_\Phi$-admissibility of $C$. First consider the finite-time case.
Let 
\[\sup_{z \in \C_\alpha} \widetilde{\Phi}^{-1}(\real z) ~ \lVert CR(z,A) \rVert < \infty\]
for some $\alpha > \omega(A)$.
\Cref{cor:equivalence_resolventest&semigroupest_infinite_time} implies for $\beta>\max(\alpha,0)$ that
\[M \coloneqq \sup_{t>0} (\Phi^{-1}(\tfrac{1}{t}))^{-1} \lVert C (\e^{-\beta t}T(t))\rVert < \infty\]
 and \Cref{cor:tCT(t)AxinLPhi}  implies that $f(t)=t C(\e^{-\beta t}T(t)) (A-\beta)x$ lies in $\Lp_\Phi(0,\tau;Y)$ for every $\tau \in (0,\infty)$. For $x \in D(A)$ and $t \in (0,\tau)$ we have that
\begin{align*}
C(\e^{-\beta t}T(t))x&=
C(\e^{-\beta \tau}T(\tau))x - \int_t^{\tau} C(\e^{-\beta s}T(s))(A-\beta)x \d s \\
&= C(\e^{-\beta \tau}T(\tau))x - (Lf)(t),
\end{align*}
where $L$ is the integral operator given by \eqref{eq:integralop.L}, which is bounded on $\Lp_\Phi(0,\tau;Y)$ by \Cref{lem:integraloponLPhi} since $\Phi\in \mathcal{P}$. We obtain that
\begin{align*}
\lVert C(\e^{-\beta t}T(t))x \rVert_{\Lp_\Phi(0,\tau;Y)} 
&\leq \lVert C(\e^{-\beta \tau}T(\tau))x \rVert_{\Lp_\Phi(0,\tau;Y)} + \lVert Lf \rVert_{\Lp_\Phi(0,\tau;Y)} \\
&\leq (\Phi^{-1}(\tfrac{1}{\tau}))^{-1} ~ \lVert C(\e^{-\beta \tau}T(\tau))x \rVert_Y + \lVert L \rVert \lVert f \rVert_{\Lp_\Phi(0,\tau;Y)} \\
&\leq [M+ \lVert L \rVert (c_{\tau} + \beta K_{\tau})] \lVert x \rVert
\end{align*}
where $c_{\tau}$ and $K_{\tau}$ are the constants from \Cref{cor:tCT(t)AxinLPhi} and $\lVert L \rVert$ denotes the operator norm of $L$ on $\Lp_\Phi(0,\tau;Y)$.
This shows that $C$ is $\Lp_\Phi$-admissible. The infinite-time case is even simpler. Assume that the infinite-time $\Phi$-Weiss condition holds. \Cref{thm:equivalence_resolventest&semigroupest} implies that
\[M \coloneqq \sup_{t>0} (\Phi^{-1}(\tfrac{1}{t}))^{-1} \lVert C T(t)\rVert < \infty\]
and as before
\[\lVert CT(t)x \rVert_{\Lp_\Phi(0,\tau;Y)}  \leq  (M+ \lVert L \rVert c_{\tau}) \lVert x \rVert. \]
Since $\lVert L \rVert$ and $c_{\tau}$ are uniformly bounded in $\tau>0$ (c.f.~\Cref{cor:tCT(t)AxinLPhi}) we obtain that $C$ is infinite-time $\Lp_\Phi$-admissible.
\end{proof}

On $X=\ell^r$, $r \in [1,\infty)$, there is a sufficient condition on $\Phi$ for infinite-time $\Lp_\Phi$-admissibility of $\Phi^{-1}(-A)$ when dealing with a multiplication operator $A$ given by
\begin{equation}\label{eq:diagA}
Ae_n=\lambda_n e_n,
\end{equation}
 where  $(e_n)_{n \in \N}$ is the standard basis on $\ell^r$ and $(\lambda_n)_n$ is assumed to be a sequence of non-positive numbers, i.e. $\lambda_n \leq 0$ for all $n \in \N$. 
 The  domain of $A$ is given by
\[D(A) = \left\{x=(x_n)_{n \in \N} \in \ell^r \: \middle\vert \: \sum_{n=1}^\infty \lvert \lambda_n x_n \rvert^r < \infty \right\}\]
and it is well-known that $A$ is the generator of a bounded analytic semigroup $(T(t))_{t\geq 0}$ given by
\[T(t)e_n = \e^{\lambda_n t} e_n, \qquad n \in\mathbb{N}. \]
Clearly, for any Young function $\Phi$ the functional calculus for multiplication operators yields that $\Phi^{-1}(-A)$ is given by
\[\Phi^{-1}(-A) e_n = \Phi^{-1}(-\lambda_n)e_n, \qquad n \in\mathbb{N}.\]

\begin{proposition}\label{prop:admissibility_on_l^r}
Consider the operator $A$ on $\ell^r$ given by \eqref{eq:diagA}. If $\Phi$ and $t\mapsto \Phi(t^{\nicefrac{1}{r}})$ are Young functions, then $\Phi^{-1}(-A)$ is infinite-time $\Lp_\Phi$-admissible.
\end{proposition}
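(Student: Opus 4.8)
The plan is to verify the admissibility estimate from \Cref{def:adm} directly, exploiting that on $\ell^r$ the $\Lp_\Phi$-norm of the output trajectory can be computed coordinatewise and that the hypothesis on $t\mapsto\Phi(t^{\nicefrac1r})$ is exactly what lets us push $\Phi$ through the $\ell^r$-sum. First I would record the reductions. Since $A$ is a multiplication operator with $\sigma(-A)\subseteq[0,\infty)$, the operator $\Phi^{-1}(-A)$ is well defined and lies in $\Lb(D(A),\ell^r)$ (as established above), so it is a legitimate observation operator. Writing $\mu_n:=-\lambda_n\ge0$, the trajectory is $\Phi^{-1}(-A)T(t)x=\sum_n \Phi^{-1}(\mu_n)\e^{-\mu_n t}x_n e_n$, so that, setting $g(t):=\lVert \Phi^{-1}(-A)T(t)x\rVert_{\ell^r}$,
\[ g(t)^r=\sum_{n} \Phi^{-1}(\mu_n)^r\, \e^{-r\mu_n t}\,\lvert x_n\rvert^r. \]
By homogeneity of the linear map $x\mapsto\Phi^{-1}(-A)T(\cdot)x$ it suffices to prove $\lVert g\rVert_{\Lp_\Phi(0,\infty)}\le1$ when $\lVert x\rVert_{\ell^r}=1$, i.e.\ $\int_0^\infty \Phi(g(t))\,\d t\le1$; the terms with $\mu_n=0$ drop out because $\Phi^{-1}(0)=0$.

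The key step uses that $\Psi:=\Phi(\,\cdot^{\nicefrac1r})$ is, by assumption, a Young function, hence convex with $\Psi(0)=0$. Setting $w_n:=\lvert x_n\rvert^r$, so that $w_n\ge0$ and $\sum_n w_n=1$, I would apply Jensen's inequality in its supporting-line form at the point $g(t)^r$ (which is finite for each $t>0$) to obtain, pointwise in $t$,
\[ \Phi(g(t))=\Psi\Big(\textstyle\sum_n w_n\,\Phi^{-1}(\mu_n)^r\e^{-r\mu_n t}\Big)\le \sum_n w_n\,\Psi\big(\Phi^{-1}(\mu_n)^r\e^{-r\mu_n t}\big)=\sum_n w_n\,\Phi\big(\Phi^{-1}(\mu_n)\e^{-\mu_n t}\big). \]
Integrating in $t$ and invoking Tonelli's theorem (all terms are nonnegative) reduces the problem to the scalar integrals $I_n:=\int_0^\infty \Phi\big(\Phi^{-1}(\mu_n)\e^{-\mu_n t}\big)\,\d t$.

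Finally I would estimate each $I_n$. Writing $b_n:=\Phi^{-1}(\mu_n)$, so that $\Phi(b_n)=\mu_n$, and substituting $v=b_n\e^{-\mu_n t}$ gives
\[ I_n=\frac1{\mu_n}\int_0^{b_n}\frac{\Phi(v)}{v}\,\d v\le \frac1{\mu_n}\cdot\frac{\Phi(b_n)}{b_n}\cdot b_n=\frac{\Phi(b_n)}{\mu_n}=1, \]
where the inequality uses that $v\mapsto\Phi(v)/v$ is nondecreasing, a consequence of convexity of $\Phi$ together with $\Phi(0)=0$. Combining the last three displays yields $\int_0^\infty\Phi(g(t))\,\d t\le\sum_n w_n=1$, which is the desired bound and gives infinite-time $\Lp_\Phi$-admissibility of $\Phi^{-1}(-A)$ with constant $c_\infty\le1$.

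I expect the main obstacle to be the middle step: it is precisely the hypothesis that $\Phi(t^{\nicefrac1r})$ is again a Young function---convexity being the crucial feature, as $t\mapsto t^{\nicefrac1r}$ is merely concave for $r\ge1$---that permits interchanging $\Phi$ with the $\ell^r$-sum via Jensen's inequality; the accompanying technical care is to justify the infinite convex combination (handled by the supporting-line form of Jensen, valid since $g(t)^r<\infty$ for $t>0$) and the interchange of summation and integration (handled by Tonelli).
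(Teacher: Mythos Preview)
Your proof is correct. The paper's argument follows the same overall strategy---exploit convexity of $\Psi(t)=\Phi(t^{1/r})$ to interchange the $\Lp_\Phi$-norm with the $\ell^r$-sum, then bound each scalar term $\lVert \Phi^{-1}(\mu_n)\e^{-\mu_n\cdot}\rVert_{\Lp_\Phi}\le1$---but packages these two steps as citations to separate appendix results: a generalized Minkowski inequality for Orlicz spaces (proved via the dual Orlicz norm) and a lemma computing $\lVert\e^{-s\cdot}\rVert_{\Lp_\Phi}$. Your direct Jensen-and-integrate route is more self-contained, avoids the duality detour, and in fact yields the sharper admissibility constant $c_\infty\le1$ in place of the paper's $2^{1/r}$.
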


\begin{proof}
Similar to the $\Lp^p$ case we obtain for $x=(x_n)_n \in D(A)$ that
\begin{align*}
\lVert \Phi^{-1}(-A) T(\cdot)x \rVert_{\Lp_\Phi(0,\infty;\ell^r)}
&= \left\lVert\left( \sum_{\substack{n=1 \\ \lambda_n \neq 0}}^\infty \lvert \Phi^{-1}(-\lambda_n) \e^{\lambda_n \cdot}  x_n \rvert^r  \right)^{\nicefrac{1}{r}}\right\rVert_{\Lp_\Phi(0,\infty)}\\
&\leq 2^{\nicefrac{1}{r}} \left(\sum_{\substack{n=1 \\ \lambda_n \neq 0}}^\infty \lVert \Phi^{-1}(-\lambda_n) \e^{\lambda_n \cdot} x_n \rVert_{\Lp_\Phi(0,\infty)}^r \right)^{\nicefrac{1}{r}}\\
&\leq 2^{\nicefrac{1}{r}} \lVert x \rVert_{\ell^r}
\end{align*}
holds, where we applied the generalized Minkowski inequality (\Cref{apx:Minkowski}) and \Cref{apx:Orlicz_norm_exp_function}. This proves that $\Phi^{-1}(-A)$ is infinite-time $\Lp_\Phi$-admissible.
\end{proof}

\begin{remark}
We would like to point out that the theory developed in this section is also applicable to selfadjoint operators $A$ on Hilbert spaces. Indeed, by the spectral theorem (see~\cite[Thm.~D.5.1]{Ha06}) $A$ is unitary equivalent to a multiplication operator and admissibility of $C$ for $A$ is preserved under unitary transformations of $C$ and $A$. 
\end{remark}

\section{Duality of control and observation operators}\label{sec:Duality}

The previous results on Orlicz admissibility for observation operators can easily be transfered to control operators via duality. We extend known duality results for $\Lp^p$, see e.g. \cite{Weiss89i}, to Orlicz spaces.

Let $X,U,Y$ be Banach spaces such that $A$ is the generator of a $C_0$-semigroup $(T(t))_{t \geq 0}$ on $X$ and the dual semigroup $(T'(t))_{t \geq 0}$ is also strongly continuous. This is in particular the case if $X$ is reflexive. By $X_1$ we denote $D(A)$ equipped with the graph norm of $A$ and $X_{-1}$ is the completion of $X$ with respect to the norm defined by
\begin{equation*}
\lVert x \rVert_{X_{-1}} = \lVert (\beta -A)^{-1} x \rVert_X, \quad x \in X
\end{equation*}
for some $\beta \in \rho(A)$. Different choices of $\beta \in \rho(A)$ leads to equivalent norms on $X_{-1}$. The $C_0$-semigroup $(T(t))_{t \geq 0}$ has a unique extension to a $C_0$ semigroup $(T_{-1}(t))_{t \geq 0}$ on $X_{-1}$ whose generator is an extension of $A$. The same construction can be done for $A'$ which leads to spaces $X_1^d$ and $X_{-1}^d$. We have continuous and dense embeddings
\begin{equation*}
X_1 \hookrightarrow X \hookrightarrow X_{-1} \quad \text{ and } \quad X_1^d \hookrightarrow X' \hookrightarrow X_{-1}^d,
\end{equation*}
and dual pairings
\begin{equation*}
\langle y_1 , x_1 \rangle_{X_{-1}^d, X_1} \quad \text{ and } \quad \langle y_2 , x_2 \rangle_{X_1^d , X_{-1}}
\end{equation*} 
which are nothing else as the standard dual pairing on $X'$ and $X$ if $y_1 \in X'$ and $x_2 \in X$, see e.g.~\cite{TucWei09}. The spaces $X_{-1}$ and $X_{-1}^d$ are the dual spaces of $X_1^d$ and $X_1$ with respect to the pivot spaces $X$ and $X'$ respectively (\cite[Ch.~6]{Weiss89i}). This concept gives rise to dual operators for $B \in \Lb(U,X_{-1})$ and $C \in \Lb(X_1,Y)$ in the following sense $B' \in \Lb(X_1^d,U')$ and $C' \in \Lb(Y',X_{-1}^d)$. Thus a control operator $B \in \Lb(U,X_{-1})$ of the control system
\begin{equation*}
\left\{ \begin{array}{ll}
\dot{x}(t) &= Ax(t) + Bu(t), \ t \geq 0,\\
x(0) &= x_0,\\
\end{array}\right.
\end{equation*}
can be viewed as a observation operator of the dual observation system in the sense of \eqref{eq:observationsystem} in $X'$ with $A'$ instead of $A$. In the same manner $C'$ can be seen as a control operator of a system in $X'$.

\begin{definition}
The control operator $B \in \Lb(U,X_{-1})$ is called {\em $\Lp_\Phi$-admissible} (for $A$ or $(T(t))_{t \geq 0}$) if for some (and hence for all) $t > 0$ there exists a minimal constant $b_\tau > 0$ such that
\begin{equation*}
\left\lVert \int_0^\tau T_{-1}(t-s) Bu(s) \d s \right\rVert_X \leq b_\tau \lVert u \rVert_{\Lp_\Phi(0,\tau;U)}.
\end{equation*}
If, additionally, $b_\infty\coloneqq \sup_{\tau \geq 0} b_\tau < \infty$, then $B$ is called {\em infinite-time $\Lp_\Phi$-admissible}.
\end{definition}

We give the following duality result, which extends Weiss' result for $\Lp^p$  \cite{Weiss89i}.

\begin{theorem}
\begin{enumerate}[label=\arabic*.]
\item The observation operator $C \in \Lb(X_1,Y)$ is (infinite-time) $\Lp_\Phi$-admissible if and only if $C' \in \Lb(Y',X_{-1}^d)$ is a (infinite-time) $\Lp_{\widetilde{\Phi}}$-admissible control operator. Moreover, the admissibility constants coincide.
\item If the control operator $B \in \Lb(U,X_{-1})$ is (infinite-time) $\Lp_\Phi$-admissible then $B' \in \Lb(X_1^d,U')$ is a (infinite-time) $\Lp_{\widetilde{\Phi}}$-admissible observation operator. Moreover, denoting the admissibility constants of $B$ and $B'$ by $b_\tau$ and $c_\tau$, we have that $c_\tau \leq b_\tau$. Equivalence holds if $U$ is reflexive, in this case the admissibility constants coincide.
\end{enumerate}
\end{theorem}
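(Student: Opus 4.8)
The plan is to encode each admissibility notion as boundedness of a single operator and then to read off all four assertions from the fact that the relevant operators are mutually adjoint after a time reversal. For the observation operator $C$ I would introduce the output map $\Psi_\tau\colon X\to\Lp_\Phi(0,\tau;Y)$, $(\Psi_\tau x)(t)=CT(t)x$, so that $\Lp_\Phi$-admissibility of $C$ is exactly boundedness of $\Psi_\tau$ with constant $c_\tau=\lVert\Psi_\tau\rVert$; for the control operator $C'$ I would introduce the input map $\Theta_\tau\colon\Lp_{\widetilde{\Phi}}(0,\tau;Y')\to X'$, $\Theta_\tau u=\int_0^\tau T'_{-1}(\tau-s)C'u(s)\d s$, so that $\Lp_{\widetilde{\Phi}}$-admissibility of $C'$ is boundedness of $\Theta_\tau$ with constant $b_\tau=\lVert\Theta_\tau\rVert$. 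The first step is the identity, valid for $x\in X_1$ and $u\in\Lp_{\widetilde{\Phi}}$,
\[
\langle\Theta_\tau u,x\rangle_{X',X}=\int_0^\tau\langle u(s),CT(\tau-s)x\rangle_{Y',Y}\d s,
\]
which follows from the defining relations $\langle C'y',z\rangle_{X_{-1}^d,X_1}=\langle y',Cz\rangle_{Y',Y}$ and $\langle T'_{-1}(t)\xi,z\rangle=\langle\xi,T(t)z\rangle$ for the extrapolated dual semigroup, together with Bochner--Fubini. After the substitution $s\mapsto\tau-s$, which is an isometry of $\Lp_{\widetilde{\Phi}}$ in the Luxemburg norm, this exhibits $\Theta_\tau$ as the adjoint of $\Psi_\tau$ composed with time reversal.

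For part~1 I would take suprema in the identity above and interchange them. Using that $X$ is always norming for $X'$,
\[
b_\tau=\sup_{\lVert x\rVert_X\le1}\ \sup_{\lVert u\rVert_{\Lp_{\widetilde{\Phi}}}\le1}\Bigl|\int_0^\tau\langle CT(s)x,u(s)\rangle_{Y,Y'}\d s\Bigr|,
\]
and the inner supremum is, by the generalized H\"older inequality \eqref{eq:hoelder} and the definition of the Orlicz norm, precisely $\interleave CT(\cdot)x\interleave_{\Lp_\Phi(0,\tau;Y)}$. The equivalence is then immediate in both directions without any reflexivity: the forward implication uses $X$ norming $X'$, and the backward implication uses that the Orlicz norm of the $Y$-valued function $CT(\cdot)x$ may always be computed by testing against $\Lp_{\widetilde{\Phi}}(0,\tau;Y')$. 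The exact comparison of $c_\tau$ and $b_\tau$ is then a bookkeeping exercise between the Luxemburg and Orlicz norms of complementary Young functions, carried out via the dual-norm description of $\interleave\cdot\interleave$ and the equivalence \eqref{eq:eqivalent_norms_Luxemburg_Orlicz}.

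For part~2 the forward implication is the same computation with the roles of $X$ and $U$ interchanged: testing the controllability integral of $B$ against $X'$ (norming $X$), and using a measurable-selection argument that recovers the scalar Orlicz norm $\interleave\lVert B'T'(\cdot)\xi\rVert_{U'}\interleave$ by $U$-valued test functions, yields $\interleave B'T'(\cdot)\xi\interleave_{\Lp_{\widetilde{\Phi}}(0,\tau;U')}\le b_\tau\lVert\xi\rVert_{X'}$ and hence $c_\tau\le b_\tau$ unconditionally. The converse amounts to recovering the input map $\Theta_\tau$ as the adjoint of the output map $\xi\mapsto B'T'(\cdot)\xi$, and for this one needs $\Lp_\Phi(0,\tau;U)$ to sit isometrically in the dual of $\Lp_{\widetilde{\Phi}}(0,\tau;U')$, i.e.\ the vector-valued Orlicz duality $\Lp_{\widetilde{\Phi}}(0,\tau;U')^{*}\cong\Lp_\Phi(0,\tau;U)$.

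I expect the main obstacle to be exactly this vector-valued Orlicz duality, together with the non-trivial identity $\interleave f\interleave_{\Lp_\Phi(\Omega;Y)}=\interleave\lVert f\rVert_Y\interleave_{\Lp_\Phi(\Omega;\R)}$ that underlies it; both are the delicate points (prepared in the Appendix) that fail for general output spaces. The duality $\Lp_{\widetilde{\Phi}}(U')^{*}\cong\Lp_\Phi(U)$ requires $U'$ to have the Radon--Nikodym property, which is guaranteed once $U$ is reflexive, and this is precisely where the reflexivity hypothesis enters in the converse of part~2 and simultaneously forces the coincidence of the admissibility constants. Keeping careful track of which of the two equivalent norms (Luxemburg versus Orlicz) is being used at each pairing is the one place where routine-looking estimates must be handled with some care.
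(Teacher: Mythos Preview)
Your proposal is correct and follows essentially the same route as the paper. The paper's own proof is a single sentence: it invokes \Cref{apx:equivalent norms_Orlicz_Luxemburg} (the identity $\interleave f\interleave_{\Lp_\Phi(\Omega;Y)}=\interleave\,\lVert f\rVert_Y\,\interleave_{\Lp_\Phi(\Omega)}$ and the Luxemburg--Orlicz norm equivalence) and declares the rest analogous to Weiss's original argument in \cite[Thm.~6.3]{Weiss89i}; you have simply written out what that analogy entails --- the adjointness of input and output maps after time reversal, the recovery of the vector-valued Orlicz norm by scalar testing, and the role of the Radon--Nikodym property (via reflexivity of $U$) in the converse of part~2.

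One small correction: in your last paragraph, applying \Cref{apx:dual_Orlicz_space} with $Y=U'$ gives $(\E_{\widetilde\Phi}(U'))'\cong\Lp_\Phi(U'')$ provided $U''$ has the Radon--Nikodym property, not $U'$; reflexivity of $U$ of course still delivers this (since then $U''=U$ is reflexive and hence has RNP), so your conclusion stands.
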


\begin{proof}
By \Cref{apx:equivalent norms_Orlicz_Luxemburg} the proof is analogous to the one of \cite[Thm.~6.3]{Weiss89i}. \end{proof}

\section{Appendix: Further results on Orlicz spaces}

We give some technical results on Orlicz spaces, some of which we could not locate in the literature, and hence may be of interest in their own right. This includes \Cref{apx:dual_Orlicz_space}, \Cref{apx:rightshift_EPhi}, \Cref{apx:rightshift_LPhi} and \Cref{apx:Minkowski}. The functions $\Phi$ and $\widetilde{\Phi}$ will always be complementary Young functions and $(\Omega,\mathcal{F},\mu)$ will denote a measure space. \medskip

We start off with a technical result on the class $\mathcal{P}$, c.f.~\Cref{sec:Orlicz_spaces} and the Orlicz norm of parameter depending exponential functions which is of interest when working with the Laplace transform representation of the resolvent of an operator.

\begin{lemma}\label{apx:P_functions_are_Young_functions}
Let $f:(0,\infty) \rightarrow (0,\infty)$ be given by 
\begin{equation*}
f(t) = t^{\nicefrac{1}{p}} \rho(t^{\nicefrac{1}{q}-\nicefrac{1}{p}}) \quad \text{ with } 1 < p < q < \infty
\end{equation*}
for $t>0$, where $\rho:(0,\infty) \rightarrow (0,\infty)$ is a continuous concave function such that $\rho(st) \leq \max(1,s) \rho(t)$ for all $s,t>0$. Then $f$ is strictly increasing and hence invertible on $(0,\infty)$. Its inverse $f^{-1}$ is a Young function.
\end{lemma}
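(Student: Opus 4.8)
The plan is to prove the statement in three stages: first establish strict monotonicity of $f$, then derive the $\lim_{t \to 0} f(t)/t = 0$ and $\lim_{t\to\infty} f(t)/t = \infty$ behavior, and finally transfer these properties to $f^{-1}$ to conclude that $f^{-1}$ is a Young function. For monotonicity, I would substitute $u = t^{\nicefrac{1}{q}-\nicefrac{1}{p}}$ and write $f(t) = t^{\nicefrac{1}{p}} \rho(t^{\nicefrac{1}{q}-\nicefrac{1}{p}})$, then show $f$ is strictly increasing by a direct argument using the inequality $\rho(st) \leq \max(1,s)\rho(t)$. The key observation is that for $t_1 < t_2$, writing $t_2 = s t_1$ with $s > 1$, one has $\rho(t_2^{\nicefrac{1}{q}-\nicefrac{1}{p}}) = \rho(s^{\nicefrac{1}{q}-\nicefrac{1}{p}} t_1^{\nicefrac{1}{q}-\nicefrac{1}{p}})$; since $\nicefrac{1}{q}-\nicefrac{1}{p} < 0$ the factor $s^{\nicefrac{1}{q}-\nicefrac{1}{p}} < 1$, so the defining inequality gives $\rho(t_2^{\nicefrac{1}{q}-\nicefrac{1}{p}}) \leq \rho(t_1^{\nicefrac{1}{q}-\nicefrac{1}{p}})$, meaning the $\rho$-factor is decreasing. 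I must then quantify this decay precisely to show it is dominated by the growth of $t^{\nicefrac{1}{p}}$, yielding net strict increase.

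The cleaner route is to verify that $f$ satisfies growth estimates analogous to the class $\mathcal{P}$ inequalities. Since $\rho$ is concave with $\rho(st) \leq \max(1,s)\rho(t)$, it is increasing and sublinear, so there are two-sided power bounds on $\rho$ near $0$ and $\infty$. Combining $\rho(v) \asymp$ a power of $v$ between exponents $0$ and $1$ with the ansatz for $f$, I expect bounds of the form $f(t) \asymp t^{\nicefrac{1}{q}}$ for small $t$ and $f(t) \asymp t^{\nicefrac{1}{p}}$ for large $t$ (up to the slowly-varying $\rho$ correction), and in all cases $\nicefrac{1}{q}, \nicefrac{1}{p} \in (0,1)$. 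This immediately yields $f(t)/t \to 0$ as $t \to 0$ (since the effective exponent exceeds $1$ in the inverse, i.e. $f$ grows sublinearly near $\infty$ but superlinearly relative to... ) — here I must be careful with the direction, so I would compute $\lim_{t\to 0} f(t)/t$ and $\lim_{t\to\infty} f(t)/t$ directly from these bounds rather than relying on intuition about which exponent dominates.

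Once $f$ is shown to be a strictly increasing continuous bijection of $(0,\infty)$ onto $(0,\infty)$ with $f(t)/t \to 0$ at $0$ and $f(t)/t \to \infty$ at $\infty$, the inverse $g = f^{-1}$ is automatically continuous, strictly increasing, and bijective. The limit conditions transfer by a standard reciprocal argument: writing $s = f(t)$, we have $g(s)/s = t/f(t)$, so $\lim_{s\to 0} g(s)/s = \lim_{t\to 0} t/f(t)$ and $\lim_{s \to \infty} g(s)/s = \lim_{t\to\infty} t/f(t)$, which are $0$ and $\infty$ respectively precisely when $f(t)/t$ has limits $\infty$ and $0$. The only remaining property needed for $g$ to be a Young function is \emph{convexity} of $g = f^{-1}$, equivalently \emph{concavity} of $f$, and this is the step I expect to be the main obstacle. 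Concavity of $f(t) = t^{\nicefrac{1}{p}}\rho(t^{\nicefrac{1}{q}-\nicefrac{1}{p}})$ does not follow from concavity of $\rho$ alone, since it is a product of a concave increasing power with a composition; I would attack it either by a second-derivative computation (expressing $f''$ in terms of $\rho, \rho', \rho''$ and using $\rho'' \leq 0$ together with $\rho' \geq 0$ and the specific exponents $\nicefrac{1}{p}, \nicefrac{1}{q}-\nicefrac{1}{p}$), or, more robustly, by invoking the representation results in the surrounding remark — namely that functions of this form coincide with inverses of the quasi-concave representations in \eqref{eq:PhiclassP}–\eqref{eq:tildePhiclassP} — to identify $f^{-1}$ with a genuine Young function whose convexity is already built into the class $\mathcal{P}$ framework. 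The product structure makes the direct convexity verification delicate, so establishing it carefully is where the real work lies.
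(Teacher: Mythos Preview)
Your approach is essentially the one the paper takes. The paper's cleaner execution is to prove the two-sided scaling bound
\[
\min(s^{1/p},s^{1/q})\,f(t)\;\le\;f(st)\;\le\;\max(s^{1/p},s^{1/q})\,f(t)\qquad(s,t>0),
\]
using on one side that $\rho$ is increasing (from $\rho(st)\le\max(1,s)\rho(t)$) and on the other that $u\mapsto\rho(u)/u$ is decreasing (from concavity). This single inequality gives you strict monotonicity of $f$, surjectivity onto $(0,\infty)$, and (taking $t=1$) the limits $f(s)/s\to\infty$ as $s\to0$ and $f(s)/s\to0$ as $s\to\infty$, which is exactly the direction you need after your substitution $g(s)/s=t/f(t)$. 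Your instinct to look for power-type growth bounds is correct, and this is the precise form they take.

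On convexity of $f^{-1}$ (equivalently concavity of $f$): you are right that this is the delicate step, and the paper does \emph{not} prove it either---it cites Karlovich--Maligranda \cite{karlovich2001interpolation}. Your proposed second-derivative computation is not available in general, since $\rho$ is only assumed continuous and concave, not $C^2$; so one must argue at the level of difference quotients or, as the paper does, defer to the literature. Your alternative of invoking the representation \eqref{eq:PhiclassP} is circular, since that representation already presupposes that $\Phi=f^{-1}$ is a Young function.
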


\begin{proof}
From $\rho(st)\leq \max(1,s) \rho(t)$ we deduce that $\rho$ is increasing. The concavity of $\rho$ implies that $s \mapsto \frac{\rho(s)}{s}$ is decreasing on $(0,\infty)$ and since $\frac{1}{q} - \frac{1}{p}<0$, $s \mapsto \frac{\rho(st)^{\nicefrac{1}{q}-\nicefrac{1}{p}}}{s^{\nicefrac{1}{q}-\nicefrac{1}{p}}}$ is increasing for every $t>0$. For $s \in (0,1)$
\begin{align*}
f(st)
&= s^{\nicefrac{1}{p}} t^{\nicefrac{1}{p}} \rho((st)^{\nicefrac{1}{q}-\nicefrac{1}{p}})\\
&\geq s^{\nicefrac{1}{p}} t^{\nicefrac{1}{p}} \rho(t^{\nicefrac{1}{q}-\nicefrac{1}{p}})
= s^{\nicefrac{1}{p}} f(t)
\end{align*}
and for $s \in [1,\infty)$
\begin{align*}
f(st)
&=s^{\nicefrac{1}{q}} t^{\nicefrac{1}{p}} \frac{\rho((st)^{\nicefrac{1}{q}-\nicefrac{1}{p}})}{s^{\nicefrac{1}{q}-\nicefrac{1}{p}}}\\
&\geq s^{\nicefrac{1}{q}} t^{\nicefrac{1}{p}}\rho(t^{\nicefrac{1}{q}-\nicefrac{1}{p}})
=s^{\nicefrac{1}{q}} f(t).
\end{align*}
These inequalities and the properties of $\rho$ imply
\begin{equation}\label{eq:behavior_of_class_P_functions}
\min(s^{\nicefrac{1}{p}},s^{\nicefrac{1}{q}}) f(t) \leq f(st) \leq \max(s^{\nicefrac{1}{p}},s^{\nicefrac{1}{q}}) f(t)
\end{equation}
for $s,t>0$. We conclude that $f$ is strictly increasing on $(0,\infty)$ and $f((0,\infty) ) = (0,\infty)$. Hence, $f$ possesses an inverse $f^{-1}:(0,\infty) \rightarrow (0,\infty)$, which is again continuous and strictly increasing. For the convexity of $f^{-1}$ we refer to \cite{karlovich2001interpolation}. It follows from \eqref{eq:behavior_of_class_P_functions} for $t=1$ that $\frac{f(s)}{s} \rightarrow \infty$ as $s \to 0$ and $\frac{f(s)}{s} \to 0$ as $s \to \infty$  holds, hence $f^{-1}$ is a Young function.
\end{proof}

\begin{lemma}\label{apx:Orlicz_norm_exp_function}
It holds for every $s>0$ that
\[  \widetilde{\Phi}^{-1}(s) \leq \left(\lVert \e^{-s \cdot} \rVert_{\Lp_{\widetilde{\Phi}}(0,\infty)}\right)^{-1} \]
and if $\widetilde{\Phi} \in \Delta_2^{\text{\tiny global}}$, then there exists a constant $c>0$ such that
\begin{equation}\label{eq:exporlicznormequivalent}
c \left(\lVert \e^{-s \cdot} \rVert_{\Lp_{\widetilde{\Phi}}(0,\infty)}\right)^{-1}   \leq \widetilde{\Phi}^{-1}(s) \leq \left(\lVert \e^{-s \cdot} \rVert_{\Lp_{\widetilde{\Phi}}(0,\infty)}\right)^{-1}.
\end{equation}
\end{lemma}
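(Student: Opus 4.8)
The plan is to compute the Luxemburg norm of the exponential function $\e^{-s\cdot}$ on $(0,\infty)$ explicitly enough to relate it to $\widetilde{\Phi}^{-1}(s)$. Recall that by definition
\begin{equation*}
\lVert \e^{-s\cdot} \rVert_{\Lp_{\widetilde{\Phi}}(0,\infty)} = \inf\left\{ k>0 \: \middle\vert \: \int_0^\infty \widetilde{\Phi}\!\left( \frac{\e^{-st}}{k} \right) \d t \leq 1 \right\},
\end{equation*}
so the strategy is to control the integral $\int_0^\infty \widetilde{\Phi}(\e^{-st}/k)\d t$ from above and below. The substitution $u = \e^{-st}$, giving $\d t = -\d u/(su)$, turns this into $\frac{1}{s}\int_0^1 \widetilde{\Phi}(u/k)\, u^{-1}\d u$, which removes the parameter $s$ from inside $\widetilde{\Phi}$ and isolates its role as the prefactor $1/s$. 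This is the key normalization that should make both inequalities visible.

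For the \emph{upper bound} on the norm (equivalently the lower bound $\widetilde{\Phi}^{-1}(s)\le (\lVert \e^{-s\cdot}\rVert)^{-1}$ that holds for every Young function), I would exploit convexity of $\widetilde{\Phi}$: since $\widetilde{\Phi}(0)=0$, convexity gives $\widetilde{\Phi}(\theta r)\le \theta\widetilde{\Phi}(r)$ for $\theta\in[0,1]$, so $\widetilde{\Phi}(u/k)\le u\,\widetilde{\Phi}(1/k)$ for $u\in[0,1]$. Plugging this into the transformed integral yields $\frac{1}{s}\int_0^1 \widetilde{\Phi}(u/k)u^{-1}\d u \le \frac{1}{s}\widetilde{\Phi}(1/k)\int_0^1 \d u = \frac{1}{s}\widetilde{\Phi}(1/k)$. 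Choosing $k$ so that $\widetilde{\Phi}(1/k)=s$, i.e. $k = 1/\widetilde{\Phi}^{-1}(s)$, makes this integral $\le 1$, which shows the infimum defining the norm is at most $1/\widetilde{\Phi}^{-1}(s)$, giving exactly the universal inequality $\widetilde{\Phi}^{-1}(s)\le (\lVert \e^{-s\cdot}\rVert_{\Lp_{\widetilde{\Phi}}})^{-1}$.

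For the \emph{reverse bound} under $\widetilde{\Phi}\in\Delta_2^{\text{\tiny global}}$, the obstacle is to bound the transformed integral from below, or rather to show that for $k$ somewhat larger than $1/\widetilde{\Phi}^{-1}(s)$ the integral already exceeds $1$, forcing the norm down. Here the $\Delta_2$ condition is essential: it provides a reverse-type control $\widetilde{\Phi}(2u)\le K\widetilde{\Phi}(u)$, equivalently a lower bound of the form $\widetilde{\Phi}(u/k) \ge c'\, u^{\beta}\,\widetilde{\Phi}(1/k)$ on a suitable subinterval (say $u\in[1/2,1]$), where the polynomial growth exponent $\beta$ comes from iterating $\Delta_2^{\text{\tiny global}}$. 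Restricting the integral $\frac{1}{s}\int_0^1\widetilde{\Phi}(u/k)u^{-1}\d u$ to $[1/2,1]$ and using this lower bound gives $\ge \frac{c''}{s}\widetilde{\Phi}(1/k)$; hence if $k$ satisfies $\widetilde{\Phi}(1/k) = s/c''$ the integral is $\ge 1$, so the norm is at least a fixed constant multiple of $1/\widetilde{\Phi}^{-1}(s)$. Assembling the constant $c$ from $c''$ and the $\Delta_2$ parameter $K$ then yields the two-sided estimate \eqref{eq:exporlicznormequivalent}.

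The main obstacle I anticipate is the lower bound step: producing a clean, $s$-independent lower bound on $\widetilde{\Phi}(u/k)$ over a fixed subinterval using only the \emph{global} $\Delta_2$ condition, and tracking how the resulting constant $c$ depends on $K$ without circularity. Convexity alone gives only one direction, so the whole force of $\Delta_2^{\text{\tiny global}}$ must be used to prevent $\widetilde{\Phi}$ from decaying too fast near $0$; care is needed because the substitution concentrates the integral near $u=0$ (i.e. large $t$), precisely where the behaviour of $\widetilde{\Phi}$ is most delicate. I would handle this by confining attention to $u$ bounded away from $0$ so that the $\Delta_2$ comparison applies uniformly, accepting a constant loss that is absorbed into $c$.
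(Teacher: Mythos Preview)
Your proposal is correct and follows essentially the same route as the paper. For the first inequality both arguments use convexity in the form $\widetilde{\Phi}(\theta r)\le\theta\,\widetilde{\Phi}(r)$ for $\theta\in[0,1]$ and then test $k=1/\widetilde{\Phi}^{-1}(s)$; your substitution $u=\e^{-st}$ is only a change of bookkeeping. For the reverse inequality the paper also iterates the global $\Delta_2$-condition to obtain a pointwise lower bound, namely $\widetilde{\Phi}(\e^{-r}\widetilde{\Phi}^{-1}(s))\ge K^{-(r+1)}s$, and then integrates the resulting geometric decay over all of $(0,\infty)$, arriving at the explicit constant $c=\min\{1,\,1/(K\log K)\}$; your variant of restricting to $u\in[1/2,1]$ (equivalently $t\in[0,\tfrac{\ln 2}{s}]$) and applying a single $\Delta_2$-step works equally well and is arguably simpler, at the cost of a slightly less explicit constant. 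One small point you left implicit: after obtaining $\lVert\e^{-s\cdot}\rVert\ge 1/\widetilde{\Phi}^{-1}(s/c'')$ you still need $\widetilde{\Phi}^{-1}(s/c'')\le (1/c'')\,\widetilde{\Phi}^{-1}(s)$, which follows from concavity of $\widetilde{\Phi}^{-1}$ (not from $\Delta_2$), and then $c=c''$ works.
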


\begin{proof}
The convexity of $\widetilde{\Phi}$ yields for $k \geq \left( \widetilde{\Phi}^{-1}(s) \right)^{-1}$
\begin{equation*}
\int_0^\infty \widetilde{\Phi}\left( \frac{\e^{-st}}{k} \right) \d t \leq \widetilde{\Phi}\left(\frac{1}{k}\right) \int_0^\infty \e^{-st} \d t  \leq 1
\end{equation*}
and hence, $\lVert \e^{-s \cdot} \rVert_{\Lp_{\widetilde{\Phi}}(0,\infty)} \leq \left( \widetilde{\Phi}^{-1}(s) \right)^{-1}$. For the second part assume $\widetilde{\Phi} \in \Delta_2^{\text{\tiny global}}$, i.e. there is $K>1$ such that $\widetilde{\Phi}(e x) \leq K \widetilde{\Phi}(x)$ for all $x>0$. By monotonicity 
\[ \widetilde{\Phi}(\e^r x) \leq \widetilde{\Phi}(\e^{\lceil r \rceil}  x) \leq  K^{\lceil r \rceil} \widetilde{\Phi}(x) \leq K^{r+1} \widetilde{\Phi}(x)\]
 follows for all $r>0$ and taking $x=e^{-r} \widetilde{\Phi}^{-1}(s)$ leads to 
 \[ K^{-(r+1)} s \leq \widetilde{\Phi}(\e^{-r} \widetilde{\Phi}^{-1}(s)).\]
 Let $c=\min \{1, \frac{1}{K\log(K)}\} \in (0,1]$. Convexity of $\widetilde{\Phi}$ yields
\begin{align*}
\int_0^\infty \widetilde{\Phi}\left( \frac{\e^{-st}\widetilde{\Phi}^{-1}(s)}{c} \right) \d t
&\geq \frac{1}{c} \int_{0}^{\infty} \widetilde{\Phi}\left( \e^{-st}\widetilde{\Phi}^{-1}(s) \right) \d t \\
&\geq \frac{1}{c} \int_0^\infty K^{- (st+1)} s \d t = \frac{1}{c \cdot K \log(K)} \geq 1.
\end{align*}
By the definition of the Luxemburg norm, we infer that
\[ c \left( \widetilde{\Phi}^{-1}(s) \right)^{-1} \leq \lVert \e^{-s \cdot} \rVert_{\Lp_{\widetilde{\Phi}}(0,\infty)},\]
 which completes the proof.
 \end{proof}
 
 \begin{proof}[Proof of \Cref{example:class_P} (iv)]
Let $\rho(t) = \log(1+t)$. It is well known that $\rho$ is concave and that it holomorphic on any sector $S_\delta$. We first check that $\rho(st) \leq \max(1,s) \rho(t)$ for $s,t>0$ holds. For $s \leq 1$ the monotonicity of $\rho$ implies $\rho(st) \leq \rho(t)$. For $s > 1$, $\rho(st) \leq s\rho(t)$ is equivalent to $\log(1+st) \leq \log( (1+t)^s)$. The letter holds by Bernoulli's inequality. Hence, $\Phi$ is of class $\mathcal{P}$.\\
For the last part, it suffices to prove $\lvert \rho(z) \rvert \sim \rho(\lvert z \rvert)$ on $S_\delta$ for $\delta \leq \frac{\pi}{3}$ or equivalently $2 \cos(\delta)\geq 1$. Let $z=r\e^{\iu \theta} \in S_\delta$, i.e. $r>0$ and $\theta \in (-\delta,\delta)$. Note that $1+z \in S_\delta$ and $\lvert 1+z \rvert^2 = 1 + 2 \cos(\theta) r + r^2$ holds. We infer that
\begin{align*}
\lvert \log(1+z) \rvert^2
&= \left\lvert \log(\lvert 1+z \rvert) + \iu \arg(1+z) \right\rvert^2\\
&=\left( \frac{1}{2} \log\left( \sqrt{1+ 2 \cos(\theta) r + r^2}\right) \right)^2 + \left( \arg(1+z) \right)^2\\
&\geq \frac{1}{16} \left( \ln(1+2 \cos(\delta) r) \right)^2\\
&\geq \frac{1}{16} \left( \ln(1+r) \right)^2\\
&=\frac{1}{16} \left( \ln(1+\lvert z \rvert) \right )^2.
\end{align*}
We proved $\frac{1}{4} \rho(\lvert z \rvert ) \leq \lvert \rho(z )\rvert$.
Similar, we estimate
\begin{align*}
\lvert \log(1+z) \rvert^2
&= \left( \frac{1}{2} \log( \underset{\leq (1+r)^2}{\underbrace{1+ 2 \cos(\theta) r + r^2}}) \right)^2 + \left( \arg(1+z) \right)^2 \\
&\leq \log(1+r)^2 + \left( \arg(1+z) \right)^2\\
&=\rho(\lvert z \rvert)^2 + \left( \arg(1+z) \right)^2.
\end{align*}
Thus,  to derive $\lvert \rho(z ) \rvert \leq m \rho(\lvert z \rvert) $ for some positive constant $m$ independent of $z$ we can show that 
\begin{equation*}
\frac{\lvert \arg(1+z) \rvert }{\rho(\lvert z \rvert)}= \frac{\lvert \arg(1+z) \rvert }{\log(1+ \lvert z \rvert)}
\end{equation*}
is bounded in $z \in S_\delta$. Since $\arg(1+z)$ is continuous in $ z \in \overline{S_\delta} \setminus \{0\}$ the boundedness follows on compact subsets of $\overline{S_\delta} \setminus \{0\}$. Moreover, $\lvert \arg(1+z)\rvert \leq \frac{\pi}{3}$ implies the boundedness for large values of $\lvert z \rvert$. It is left to show that $\frac{\lvert \arg(1+z)\rvert}{\log(1+r)}$ is bounded for small values of $\lvert z \rvert$. To this end we use $\lvert 1+z \rvert \sin(\arg(1+ z)) = \imag(1 + z)=\imag(z) = \lvert z \rvert \sin(\arg(z))$ on $S_\delta$ and that $\lvert \frac{\omega}{\sin(\omega)}\rvert \leq K$ for some $K>0$ and $\omega \in (-\delta,\delta)$. With this at hand, we estimate for $z=r \e^{\iu \theta}$,
\begin{align*}
\frac{\lvert \arg(1+z) \rvert }{\log(1+\lvert z \rvert)}
&\leq K \frac{\lvert \sin(\arg(1+r\e^{\iu \theta}))\rvert}{\log(1+r)}\\
&= K \frac{r \lvert \sin(\theta) \rvert}{\lvert 1 + r \e^{\iu \theta} \rvert ~ \log(1+r)}\\
&\leq K \frac{r}{\log(1+r)}\\
&\leq \widetilde{K}
\end{align*}
for some $\widetilde{K}>0$ and small values of $r= \lvert z \rvert$.
\end{proof}

We continue with some more general results on Orlicz spaces and the standard norms on them.

\begin{proposition}\label{apx:equivalent norms_Orlicz_Luxemburg}
For $f \in \Lp_\Phi(\Omega;Y)$  and $g \in \Lp_\Phi(\Omega;Y')$ we have that
\begin{align*}
\begin{split}
& \sup \set{ \int_\Omega \left\lvert \langle f, \widetilde{h} \rangle_{Y,Y'}\right\rvert \d \mu}{\lVert \widetilde{h} \rVert_{\Lp_{\widetilde{\Phi}}(\Omega;Y')} \leq 1}\\
 & = \sup \set{ \int_\Omega  \lVert f \rVert_Y \lvert h \rvert \d \mu }{\lVert h \rVert_{\Lp_{\widetilde{\Phi}}(\Omega)} \leq 1}
 \end{split}
\end{align*}
and
\begin{align*}
\begin{split}
& \sup \set{ \int_\Omega \left\lvert \langle \widetilde{h}, g \rangle_{Y,Y'} \right\rvert \d \mu}{\lVert \widetilde{h} \rVert_{\Lp_{\widetilde{\Phi}}(\Omega;Y)} \leq 1}\\
 & = \sup \set{ \int_\Omega  \lVert g \rVert_{Y'} \lvert h \rvert \d \mu }{\lVert h \rVert_{\Lp_{\widetilde{\Phi}}(\Omega)} \leq 1}.
 \end{split}
\end{align*}
In particular, it holds that $\interleave f \interleave_{\Lp_\Phi(\Omega;Y)} = \interleave \, \lVert f (\cdot) \rVert_{Y} \, \interleave_{\Lp_\Phi(\Omega)}$ and thus,
\begin{equation*}
\lVert f \rVert_{\Lp_\Phi(\Omega;Y)} \leq \interleave f \interleave_{\Lp_\Phi(\Omega;Y)} \leq 2 \lVert f \rVert_{\Lp_\Phi(\Omega;Y)}.
\end{equation*}
\end{proposition}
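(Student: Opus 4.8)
The plan is to reduce the vector-valued Orlicz norm to the scalar Orlicz norm of the pointwise norm and then invoke the known scalar equivalence. I would first record that the Luxemburg norm of a vector-valued function depends only on its pointwise norm: straight from the definition, $\lVert \widetilde{h} \rVert_{\Lp_{\widetilde{\Phi}}(\Omega;Y')} = \lVert\, \lVert \widetilde{h}(\cdot) \rVert_{Y'}\,\rVert_{\Lp_{\widetilde{\Phi}}(\Omega)}$, and analogously for $Y$-valued functions. Consequently the two right-hand sides are precisely the \emph{scalar} Orlicz norms $\interleave \lVert f(\cdot) \rVert_Y \interleave_{\Lp_\Phi(\Omega)}$ and $\interleave \lVert g(\cdot) \rVert_{Y'} \interleave_{\Lp_\Phi(\Omega)}$, so that once the two displayed equalities are established, the final chain follows from the known scalar estimate $\lVert u \rVert_{\Lp_\Phi} \leq \interleave u \interleave_{\Lp_\Phi} \leq 2 \lVert u \rVert_{\Lp_\Phi}$ applied to $u = \lVert f(\cdot) \rVert_Y$, together with $\lVert\, \lVert f(\cdot) \rVert_Y\,\rVert_{\Lp_\Phi(\Omega)} = \lVert f \rVert_{\Lp_\Phi(\Omega;Y)}$.

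For each equality I would prove the two inequalities separately. The direction ``$\leq$'' is pointwise duality: for admissible $\widetilde{h}$ one has $\lvert \langle f(\omega), \widetilde{h}(\omega) \rangle_{Y,Y'} \rvert \leq \lVert f(\omega) \rVert_Y \, \lVert \widetilde{h}(\omega) \rVert_{Y'}$, so integrating and setting $h \coloneqq \lVert \widetilde{h}(\cdot) \rVert_{Y'}$ exhibits a nonnegative scalar competitor with $\lVert h \rVert_{\Lp_{\widetilde{\Phi}}(\Omega)} = \lVert \widetilde{h} \rVert_{\Lp_{\widetilde{\Phi}}(\Omega;Y')} \leq 1$, whence the left-hand supremum is dominated by the right-hand one; taking the supremum over $\widetilde{h}$ yields ``$\leq$''.

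The substantive direction is ``$\geq$'', and here the main obstacle is a measurable selection of almost-norming functionals. Given an admissible scalar $h$ and $\varepsilon \in (0,1)$, I would build the vector competitor $\widetilde{h} = \lvert h \rvert\, y^*(\cdot)$, where $y^*(\omega) \in Y'$ satisfies $\lVert y^*(\omega) \rVert_{Y'} \leq 1$ and $\lvert \langle f(\omega), y^*(\omega) \rangle_{Y,Y'} \rvert \geq (1-\varepsilon) \lVert f(\omega) \rVert_Y$. To obtain such a $y^*$ measurably I would use that $f$, being Bochner measurable, is essentially separably valued: on a separable closed subspace $Y_0 \subseteq Y$ carrying the essential range of $f$ there is a countable norming sequence $(y_n^*)_n$ in the closed unit ball of $Y'$ with $\lVert y \rVert_Y = \sup_n \lvert \langle y, y_n^* \rangle \rvert$ for all $y \in Y_0$ (take norming functionals of a countable dense subset of $Y_0$). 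Partitioning $\{ f \neq 0 \}$ into the measurable pieces on which the $n$-th functional is the first to reach the factor $1-\varepsilon$ produces a countably-valued, hence strongly measurable, selection $y^*$; then $\widetilde{h} = \lvert h \rvert\, y^*(\cdot)$ is Bochner measurable with $\lVert \widetilde{h}(\cdot) \rVert_{Y'} \leq \lvert h \rvert$, so $\lVert \widetilde{h} \rVert_{\Lp_{\widetilde{\Phi}}(\Omega;Y')} \leq \lVert h \rVert_{\Lp_{\widetilde{\Phi}}(\Omega)} \leq 1$. Integrating gives $\int_\Omega \lvert \langle f, \widetilde{h} \rangle_{Y,Y'} \rvert \d \mu \geq (1-\varepsilon) \int_\Omega \lVert f \rVert_Y \lvert h \rvert \d \mu$, and letting $\varepsilon \to 0$ closes the gap.

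The second equality is entirely symmetric, the only change being that now $g \in \Lp_\Phi(\Omega;Y')$ and one selects almost-norming \emph{vectors} $y(\omega)$ in the closed unit ball of $Y$ for the functionals $g(\omega)$. Since $g$ is again essentially separably valued, a separable subspace of $Y'$ containing its range admits a countable norming sequence $(y_n)_n$ in the closed unit ball $B_Y$ of $Y$ (from approximate maximizers of the dual norm evaluated on a dense sequence), and the same partition-and-select argument furnishes a strongly measurable $y(\cdot)$; the competitor is $\widetilde{h} = \lvert h \rvert\, y(\cdot)$. Apart from this measurable selection step, which is the only genuinely nontrivial point, the argument consists of the pointwise duality bound and the scalar Orlicz--Luxemburg equivalence.
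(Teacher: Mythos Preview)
Your argument is correct and follows essentially the same route as the paper: both prove ``$\leq$'' by the trivial pointwise duality bound and ``$\geq$'' by a measurable selection of almost-norming functionals (respectively vectors) with which one builds the vector competitor $\widetilde{h}=|h|\,y^*(\cdot)$, then reduce the final chain to the known scalar equivalence. The only difference is that the paper outsources the measurable selection to a cited lemma, whereas you spell it out via a countable norming sequence for the essentially separable range of $f$ (resp.\ $g$) and a first-hitting partition; this is precisely a proof of that lemma and not a genuinely different approach.
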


\begin{proof}
The last inequalities in a consequence of the scalar case if we have proved the rest. Hence we only have to proof equality of the supremum terms.
Note that ``$\leq$'' is clear. We have to prove the other estimate.
First consider $f \in \Lp_\Phi(\Omega;Y)$ and let $\varepsilon \in (0,1)$. From \cite[Lem.~3.4.21]{schnackers2014.Diss} with $B(x,y)= \langle x,y \rangle_{Y,Y'}$ we deduce the existence of a function (Bochner-) measurable $y: \Omega \rightarrow Y'$ with $\lVert y(\cdot) \rVert_{Y'} =1$ and
\begin{equation*}
(1-\varepsilon) \lVert y \rVert_{Y'} \leq \lvert \langle f, y \rangle_{Y,Y'} \rvert
\end{equation*} 
$\mu$-almost everywhere.
 Let $h \in \Lp_{\widetilde{\Phi}}(\Omega)$ with $\lVert h \rVert_{\Lp_{\widetilde{\Phi}}(\Omega)} \leq 1$ and set $\widetilde{h}=y h$. It follows that
\begin{align*}
& (1-\varepsilon) \int_\Omega \lVert f \rVert_Y \lvert h \rvert \d \mu\\
& \leq \sup \set{\int_\Omega \left\lvert \langle f, \widetilde{h} \rangle_{Y,Y'} \right\rvert \d \mu}{\lVert \widetilde{h} \rVert_{\Lp_{\widetilde{\Phi}}(\Omega;Y')} \leq 1}.
\end{align*}
Thus, since $\varepsilon$ and $h$ was arbitrary the assertion for $f$ follows. The proof for $g$ follows similarly.\\
By the definition of the Orlicz norm we obtain  $\interleave f \interleave_{\Lp_\Phi(\Omega;Y)} = \interleave \, \lVert f (\cdot) \rVert_{Y} \, \interleave_{\Lp_\Phi(\Omega)}$ and thus, we deduce from the scalar valued case $Y=\R$, see e.g.~\cite{Kufner}, that
\begin{equation*}
\lVert f \rVert_{\Lp_\Phi(\Omega;Y)} \leq \interleave f \interleave_{\Lp_\Phi(\Omega;Y)} \leq 2 \lVert f \rVert_{\Lp_\Phi(\Omega;Y)}.
\end{equation*}
\end{proof}

\begin{proposition}\label{apx:dual_Orlicz_space}
If $(\Omega,\mathcal{F},\mu)$ is a finite measure space then the dual space of $\E_\Phi(\Omega;Y)$ is (topologically) isomorphic to $\Lp_{\widetilde{\Phi}}(\Omega;Y')$ if and only if $Y'$ possesses the Radon-Nikodym property.
\end{proposition}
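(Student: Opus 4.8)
The plan is to exhibit the natural duality map and pin down exactly where the Radon--Nikodym property (RNP) of $Y'$ is used, following the classical scheme for Lebesgue--Bochner (and, more generally, Köthe--Bochner) duality, but feeding in the \emph{scalar} Orlicz duality $(\E_\Phi(\Omega))'\cong\Lp_{\widetilde{\Phi}}(\Omega)$ together with \Cref{apx:equivalent norms_Orlicz_Luxemburg}. Concretely, I would define $J\colon\Lp_{\widetilde{\Phi}}(\Omega;Y')\to(\E_\Phi(\Omega;Y))'$ by $J(g)(f)=\int_\Omega\langle f,g\rangle_{Y,Y'}\d\mu$. The generalized Hölder inequality \eqref{eq:hoelder} shows that $J$ is well defined and bounded, injectivity is immediate, and \Cref{apx:equivalent norms_Orlicz_Luxemburg} shows that $\lVert J(g)\rVert$ is comparable to the Luxemburg norm of $g$. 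Hence the whole statement reduces to the assertion that $J$ is surjective if and only if $Y'$ has the RNP.

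For the ``if'' direction, assume $Y'$ has the RNP and fix $\ell\in(\E_\Phi(\Omega;Y))'$. I would construct a $Y'$-valued set function $G$ via $\langle y,G(E)\rangle_{Y,Y'}=\ell(\mathds{1}_E\,y)$ for $y\in Y$ and $E\in\mathcal F$; for fixed $E$ this is a bounded functional on $Y$, so $G(E)\in Y'$. Using that $\lVert\sum_i\mathds{1}_{E_i}y_i\rVert_{\E_\Phi}\le\lVert\mathds{1}_\Omega\rVert_{\Lp_\Phi}$ whenever $\lVert y_i\rVert_Y\le1$ (here finiteness of $\mu$ enters), one checks that $G$ is countably additive of bounded variation with $|G|(E)\lesssim\lVert\ell\rVert\,\lVert\mathds{1}_E\rVert_{\Lp_\Phi}$; since the norm of $\E_\Phi$ is absolutely continuous this yields $|G|\ll\mu$. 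The RNP then provides $g\in\Lp^1(\Omega;Y')$ with $G(E)=\int_E g\d\mu$, whence $\ell(f)=\int_\Omega\langle f,g\rangle_{Y,Y'}\d\mu$ first for simple $f$ and then, by density of the simple functions in $\E_\Phi$, for all $f$. Finally, testing this identity against simple functions of the form $f=u\,y(\cdot)$ and invoking \Cref{apx:equivalent norms_Orlicz_Luxemburg} identifies $\interleave g\interleave_{\Lp_{\widetilde{\Phi}}(\Omega;Y')}$ with $\lVert\ell\rVert<\infty$, so $g\in\Lp_{\widetilde{\Phi}}(\Omega;Y')$ and $J(g)=\ell$.

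For the ``only if'' direction, assume $J$ is onto and let $G\colon\mathcal F\to Y'$ be an arbitrary $\mu$-continuous measure of bounded variation; the aim is to produce its density. Writing $|G|=w\d\mu$ with $w\in\Lp^1(\Omega)_{\ge0}$ via the scalar Radon--Nikodym theorem, set $\Omega_n=\{w\le n\}$ and $G_n(\cdot)=G(\cdot\cap\Omega_n)$. On a finite measure space one has the continuous embedding $\E_\Phi(\Omega;Y)\hookrightarrow\Lp^1(\Omega;Y)$, so the functional defined on simple functions $f=\sum_i\mathds{1}_{E_i}a_i$ by $\sum_i\langle a_i,G_n(E_i)\rangle_{Y,Y'}$ is bounded for the $\E_\Phi$-norm; surjectivity of $J$ then represents it by some $g_n\in\Lp_{\widetilde{\Phi}}(\Omega;Y')$ with $G_n(E)=\int_E g_n\d\mu$. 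The $g_n$ are consistent on the increasing sets $\Omega_n$, so they patch to a measurable $g$ with $\int_\Omega\lVert g\rVert_{Y'}\d\mu=|G|(\Omega)<\infty$ and $G(E)=\int_E g\d\mu$; as $G$ was arbitrary, $Y'$ has the RNP.

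The main obstacle, in my view, is not the measure-theoretic skeleton but the vector-valued norm bookkeeping: in the sufficiency step one must verify that testing $\ell$ only against the simple functions contained in $\E_\Phi$ already recovers the \emph{full} Orlicz norm of the density, so that $g$ genuinely lands in $\Lp_{\widetilde{\Phi}}$ rather than merely in $\Lp^1$, and this is exactly what \Cref{apx:equivalent norms_Orlicz_Luxemburg} (the identity $\interleave g\interleave_{\Lp_{\widetilde{\Phi}}(\Omega;Y')}=\interleave\lVert g(\cdot)\rVert_{Y'}\interleave_{\Lp_{\widetilde{\Phi}}(\Omega)}$) is needed for. In the necessity step the analogous delicate point is the $\E_\Phi$-boundedness of the functional attached to a bounded-variation measure, which in general fails; the truncation by the level sets $\Omega_n$ and the subsequent patching are the device that circumvents it.
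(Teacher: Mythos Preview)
Your proposal is correct and follows essentially the same approach as the paper: both use \Cref{apx:equivalent norms_Orlicz_Luxemburg} to see that the natural pairing map $J$ is an isomorphism onto its range, and then reduce the statement to ``$J$ is surjective $\Leftrightarrow$ $Y'$ has the RNP'', which is handled exactly as in the $\Lp^p$ case (the paper simply cites Diestel--Uhl \cite[Ch.~IV.1]{DiestelUhl}, whereas you have spelled out that argument in the Orlicz setting). One small technicality worth making explicit in your sufficiency step: \Cref{apx:equivalent norms_Orlicz_Luxemburg} is stated for $g$ already in $\Lp_{\widetilde{\Phi}}$, so to avoid circularity you should either observe that its proof only uses measurability of $g$, or first apply it to the truncations $g\mathds{1}_{\{\lVert g\rVert_{Y'}\le n\}}$ and pass to the limit.
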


\begin{proof}
\Cref{apx:equivalent norms_Orlicz_Luxemburg} implies that the mapping $v \mapsto (u \mapsto \int_\Omega \langle u , v \rangle_{Y,Y'} \d \mu)$ from $\Lp_{\widetilde{\Phi}}(\Omega;Y')$ to $(\E_\Phi(\Omega;Y))'$ is an isomorphism onto its range. The equivalence of surjectivity of this mapping to the Radon-Nikodym property of $Y'$ can be proved analogously to the $\Lp^p$ case, see. e.g.~\cite[Ch.~IV.1]{DiestelUhl}.
\end{proof}

\begin{proposition}\label{apx:rightshift_EPhi}
The right shift semigroup $(S(t))_{t \geq 0}$ on $\E_\Phi(a,b;Y)$, $- \infty\leq a<b \leq \infty$,
\begin{equation*}
(S(t)f)(r) = \left\{ \begin{array}{ll}
f(r - t), & \text{ for } r \in (a+t,b),\\
0, & \text{ else, }\\
\end{array}\right.
\end{equation*}
is strongly continuous. Its generator is given by $D= - \frac{\d}{\d r}$ with domain $\W^1\E_\Phi(a,b;Y)$ if $a=-\infty$ and $\{ f \in \W^1\E_\Phi(a,b;Y) \mid f(a)=0 \}$ if $a>-\infty$, where $W^1\E_\Phi(a,b;Y)$ is the Orlicz-Sobolev spaces consisting of a weakly differentiable functions in $\E_\Phi(a,b;Y)$ whose weak derivative lies in $\E_\Phi(a,b;Y)$.
\end{proposition}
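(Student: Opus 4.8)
The plan is to establish the three defining properties in turn, treating $(S(t))_{t\ge0}$ first as a contraction semigroup, then as a strongly continuous one, and finally identifying the generator through an explicit resolvent computation. The semigroup law $S(t)S(s)=S(t+s)$ and $S(0)=I$ are immediate from the formula. For contractivity on $\Lp_\Phi(a,b;Y)$ the substitution $s=r-t$ gives, for every $k>0$,
\[
\int_a^b \Phi\!\left(\frac{\lVert (S(t)f)(r)\rVert_Y}{k}\right)\d r
= \int_a^{b-t} \Phi\!\left(\frac{\lVert f(s)\rVert_Y}{k}\right)\d s
\le \int_a^{b} \Phi\!\left(\frac{\lVert f(s)\rVert_Y}{k}\right)\d s,
\]
so that $\lVert S(t)f\rVert_{\Lp_\Phi}\le\lVert f\rVert_{\Lp_\Phi}$. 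Since $S(t)$ sends bounded-support $\Lp^\infty$-functions to bounded-support $\Lp^\infty$-functions and is continuous on $\Lp_\Phi$, it leaves the closure $\E_\Phi(a,b;Y)$ invariant.

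The heart of the matter is strong continuity, and this is exactly the step I expect to be the main obstacle, because it is where passing to $\E_\Phi$ rather than $\Lp_\Phi$ is essential: without a $\Delta_2$-condition the shift need not be strongly continuous on all of $\Lp_\Phi$. I would first treat continuous, compactly supported functions, which are dense in $\E_\Phi(a,b;Y)$ by the definition of $\E_\Phi$ as a closure together with a standard approximation of bounded-support $\Lp^\infty$-functions. For such an $f$ with $\operatorname{supp} f\subseteq[c,d]\subset(a,b)$ and all small $\lvert t\rvert$, both $f$ and $S(t)f$ are supported in one fixed compact set $K$, and uniform continuity yields $\lVert S(t)f-f\rVert_\infty\to0$. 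For any $g$ supported in $K$ one has the elementary bound $\lVert g\rVert_{\Lp_\Phi}\le \lVert g\rVert_\infty\,(\Phi^{-1}(1/\lvert K\rvert))^{-1}$, whence $\lVert S(t)f-f\rVert_{\Lp_\Phi}\to0$. A routine $3\varepsilon$-argument, using the uniform contractivity from the first step, then extends strong continuity to all of $\E_\Phi(a,b;Y)$; the delicate ingredients are precisely the density of compactly supported continuous functions and the uniform Orlicz-norm bound on functions with common bounded support.

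Finally, I would identify the generator $G$. Being a contraction $C_0$-semigroup, $(S(t))$ satisfies $\{\real\lambda>0\}\subseteq\rho(G)$ with $R(\lambda,G)=\int_0^\infty \e^{-\lambda t}S(t)\,\cdot\,\d t$ there, and evaluating this Laplace transform pointwise gives, for $g\in\E_\Phi(a,b;Y)$ and almost every $r$,
\[
(R(\lambda,G)g)(r)=\int_0^{r-a}\e^{-\lambda t}g(r-t)\d t=\int_a^r \e^{-\lambda(r-s)}g(s)\d s,
\]
with $a=-\infty$ admitted. Writing $f=R(\lambda,G)g$, a direct differentiation shows $f$ is weakly differentiable with $f'=-\lambda f+g\in\E_\Phi(a,b;Y)$, and $f(a)=0$ when $a>-\infty$; hence $f$ lies in the claimed domain of $D=-\frac{\d}{\d r}$ and $(\lambda-D)f=\lambda f+f'=g$. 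Conversely, for $f$ in that domain the function $f$ is the unique $\E_\Phi$-solution of $\lambda f+f'=(\lambda-D)f$ subject to the boundary, respectively decay, condition, since the homogeneous solution $c\,\e^{-\lambda r}$ violates $f(a)=0$ (for $a>-\infty$) or fails to lie in $\E_\Phi$ (for $a=-\infty$) unless $c=0$; thus $R(\lambda,G)(\lambda-D)f=f$. Therefore $R(\lambda,G)=(\lambda-D)^{-1}$, and consequently $G=D$ with the stated domain.
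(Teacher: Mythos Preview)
Your argument is correct and follows essentially the same route the paper indicates: the paper's proof is a two-line pointer to the density of $C_0([a,b];Y)$ in $\E_\Phi(a,b;Y)$ (Adams) and to the standard $\Lp^p$ argument in Engel--Nagel, and your write-up is precisely a fleshed-out version of that argument---contractivity plus a $3\varepsilon$-density argument for strong continuity, followed by a resolvent computation to identify the generator. The only difference is expository: you identify the generator via the explicit Laplace-transform formula for $R(\lambda,G)$, whereas Engel--Nagel's treatment also offers the alternative of computing the difference quotient directly on smooth functions; both lead to the same conclusion with the same domain.
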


\begin{proof}
Since $C_0([a,b];Y)$, the space of compactly supported continuous functions is dense in $\E_\Phi(a,b;Y)$, see e.g.~\cite[Thm.~8.21]{Adams} (for $Y=\R$), the proof can be done analogously to $\Lp^p(a,b;\R)$, see~\cite[Ch.~1,~Example~5.4,~Chap.~2,~Sect.~2.10~\&~2.11]{engelnagel}. It is straight forward to lift these results to the vector valued case.
\end{proof}

Note that $f(a)=0$ in the domain of $D$ makes sense since we have continuous embeddings $\W^1\E_\Phi(a,b;Y) \hookrightarrow \W^{1,1}(a,b;Y) \hookrightarrow C([a,b];Y)$.

\begin{proposition}\label{apx:rightshift_LPhi}
The right shift semigroup $(S(t))_{t \geq 0}$ on $\Lp_\Phi(a,b;Y)$, $- \infty < a<b < \infty$, is strongly continuous if and only if $\Phi \in \Delta_2^{\infty}$. The generator is  given as in \Cref{apx:rightshift_EPhi}.
\end{proposition}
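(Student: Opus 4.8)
The plan is to reduce everything to the identity $\E_\Phi(a,b;Y) = \Lp_\Phi(a,b;Y)$, which, since $(a,b)$ carries finite Lebesgue measure, holds precisely when $\Phi \in \Delta_2^\infty$ by the fact recalled in \Cref{sec:Orlicz_spaces}. For sufficiency I would argue directly: if $\Phi \in \Delta_2^\infty$, then $\E_\Phi(a,b;Y) = \Lp_\Phi(a,b;Y)$ with identical norms, and likewise $\W^1\E_\Phi(a,b;Y) = \W^1\Lp_\Phi(a,b;Y)$, so the right shift semigroup on $\Lp_\Phi$ is literally the semigroup treated in \Cref{apx:rightshift_EPhi}. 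Strong continuity and the description of the generator, namely $-\frac{\d}{\d r}$ on $\{f \in \W^1\Lp_\Phi(a,b;Y) \mid f(a)=0\}$ (using $a > -\infty$), are then immediate.

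For necessity I would proceed by contraposition and construct an explicit witness $f \in \Lp_\Phi(a,b;Y)$ whose shift orbit is discontinuous at $0$, with the bad behaviour deliberately concentrated at the left endpoint $a$, where the zero-padding of the right shift acts. Assume $\Phi \notin \Delta_2^\infty$. Then for each $n$ there is $t_n$ with $\Phi(2t_n) \geq 4^n \Phi(t_n)$, and we may take $t_n$ large enough that the lengths below are admissible. Fix a unit vector $y_0 \in Y$ and pairwise disjoint intervals $I_n \subseteq (a, a+\tfrac1n)$ of length $\alpha_n$ with $\alpha_n \Phi(t_n) = 2^{-n}$ (e.g.\ $I_n \subseteq (a+\tfrac1{n+1}, a+\tfrac1n)$), and set $f = \sum_n t_n \mathds{1}_{I_n} y_0$. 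Then $\int_a^b \Phi(\lVert f(r)\rVert_Y)\,\d r = \sum_n 2^{-n} = 1$, so $f \in \Lp_\Phi$, whereas $\int_a^b \Phi(2\lVert f(r)\rVert_Y)\,\d r \geq \sum_n 4^n \alpha_n \Phi(t_n) = \sum_n 2^n = \infty$, so $f \notin \E_\Phi$.

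The decisive estimate is the pointwise identity $(S(t)f - f)(r) = -f(r)$ for $r \in (a, a+t)$, which by monotonicity of the Luxemburg norm yields $\lVert S(t)f - f \rVert_{\Lp_\Phi} \geq \lVert f\mathds{1}_{(a,a+t)} \rVert_{\Lp_\Phi}$ for every $t>0$. Since every $I_n$ with $\tfrac1n < t$ is contained in $(a,a+t)$, the modular $\int_a^{a+t} \Phi(2\lVert f(r)\rVert_Y)\,\d r$ is an infinite tail of $\sum_n 2^n$; hence for $k=\tfrac12$ the modular of $f\mathds{1}_{(a,a+t)}$ diverges, forcing $\lVert f\mathds{1}_{(a,a+t)} \rVert_{\Lp_\Phi} \geq \tfrac12$ for all $t>0$. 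Consequently $S(t)f \not\to f$ and the semigroup is not strongly continuous. I expect the main obstacle to be the bookkeeping in this construction, namely choosing the intervals so that $f$ simultaneously lies in $\Lp_\Phi \setminus \E_\Phi$ and accumulates at $a$, keeping the left-endpoint lower bound uniformly bounded away from zero; the genuinely structural input, that strong continuity is inherited from $\E_\Phi$ exactly when $\E_\Phi = \Lp_\Phi$, is already supplied by \Cref{apx:rightshift_EPhi} together with the $\Delta_2^\infty$ characterization.
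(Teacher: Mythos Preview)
Your argument is correct and follows a genuinely different construction from the paper's. The sufficiency direction is identical: both invoke $\E_\Phi=\Lp_\Phi$ for finite measure when $\Phi\in\Delta_2^\infty$ and cite \Cref{apx:rightshift_EPhi}. For necessity, however, the paper places the bad intervals so that they accumulate at the \emph{right} endpoint. Consequently, for each fixed $t>0$ the shifted function $S(t)v$ is bounded (the unbounded tail has been pushed past $b$), and a convexity estimate of the form $\Phi(2u)\leq \tfrac12\Phi(|S(t)v-v|)+\Phi(S(t)v)$ (with $v=4u$) forces $\int\Phi(|S(t)v-v|)=\infty$.

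Your construction instead concentrates the intervals at the \emph{left} endpoint and exploits the zero-padding of the right shift directly via the pointwise identity $(S(t)f-f)\mathds{1}_{(a,a+t)}=-f\mathds{1}_{(a,a+t)}$. This sidesteps the convexity argument entirely: a single modular computation on the restriction shows $\lVert S(t)f-f\rVert_{\Lp_\Phi}\geq \tfrac12$ for all $t>0$. The bookkeeping you flag (fitting $I_n$ into $(a+\tfrac1{n+1},a+\tfrac1n)$ by enlarging $t_n$, and starting at $n$ large enough that $a+\tfrac1n<b$) is routine. Your route is arguably more transparent for this particular semigroup, since it isolates exactly the mechanism---loss of mass at the boundary---that breaks strong continuity; the paper's route has the virtue of recycling the standard $\Lp_\Phi\setminus\E_\Phi$ construction almost verbatim and would adapt more readily to translations without zero-extension.
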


\begin{proof}
For simplicity consider $(a,b)=(0,1)$. 
If $\Phi \in \Delta_2^{\infty}$ then $\Lp_\Phi(0,1;Y) = \E_\Phi(0,1;Y)$ and the result follows from \Cref{apx:rightshift_EPhi}\\
Suppose that $\Phi \notin \Delta_2^{\infty}$. Without loss of generality we can assume $Y=\R$. We will construct a function $v \in \Lp_\Phi(0,1)$ such that $\lVert S(t) v - v \rVert_{\Lp_\Phi(0,1)}\geq 1$. Since $\Phi \notin \Delta_2^{\infty}$ there exists a sequence $(t_n)_{n \geq 1}$, $t_n \geq n$ such that $\Phi(2 t_n) \geq n \Phi(t_n)$ and $\Phi(t_n) > 1$ for all $n \geq 1$. Choose $n_0 \in \N$ such that $\sum_{n=n_0}^\infty \frac{1}{n^2} < 1$ and define a family of disjoint subintervals $(I_k)_{k \geq 1}$  of $(0,1)$ by
\begin{equation*}
I_k=\left(1- \sum_{n=n_0+k}^\infty \frac{1}{n^2} - \frac{1}{\Phi(t_k) (n_0+k-1)^2} , 1- \sum_{n=n_0+k}^\infty \frac{1}{n^2} \right).
\end{equation*}
Let $u= \sum_{k=1}^\infty t_k \mathds{1}_{I_k}$. From
\begin{equation*}
\int_0^1 \Phi(u(r)) \d r = \sum_{k=1}^\infty \Phi(t_k) \frac{1}{\Phi(t_k) (n_0+k-1)^2} = \sum_{k=n_0}^\infty \frac{1}{n^2} <1
\end{equation*}
we obtain $u \in \Lp_\Phi(0,1)$. We also have that
\begin{equation*}
\int_0^1 \Phi(2u(r)) \d r = \sum_{k=1}^\infty \Phi(2t_k) \frac{1}{\Phi(t_k) (n_0+k-1)^2} \geq \sum_{k=1}^\infty \frac{k}{(n_0+k-1)^2} = \infty.
\end{equation*}
Define $v=4u$ and note that $(S(t)v)(\cdot)$ is a bounded function for every $t>0$. The convexity of $\Phi$ implies that
\begin{align*}
\infty = \int_0^1 \Phi(2u(r)) \d r \leq \frac{1}{2} \int_0^1 \Phi(\lvert (S(t)v)(r) - v(r) \rvert ) \d r + \int_0^1 \Phi( (S(t)v)(r)) \d r
\end{align*}
and hence that $\int_0^1 \Phi(\lvert (S(t)v)(r) - v(r) \rvert ) \d r= \infty$. Thus, $\lVert S(t) v - v \rVert_{\Lp_\Phi(0,1)}\geq 1$ for every $t>0$ which means that $(S(t))_{t \geq 0}$ is not strongly continuous.
\end{proof}

\begin{remark}
It is obvious that the implication ``$\Phi \in \Delta_2^{\text{\tiny global}}$ $\Rightarrow$ the right shift semigroup is strongly continuous on $ \Lp_\Phi(a,b;Y)$'' holds even if $a=-\infty$ and $b=\infty$ (since we still have $\Lp_\Phi = \E_\Phi$). The converse holds if $b< \infty$ and $t \mapsto \frac{\Phi(t)}{\Phi(2t)}$ is bounded in $0$. This is due to the fact that $\Phi \in \Delta_2^{\text{\tiny global}}$ if and only if $\Phi \in \Delta_2^{\infty}$ and $t \mapsto \frac{\Phi(t)}{\Phi(2t)}$ is bounded in $0$ and the existence of the sequence $(t_n)_{n \geq 1}$ in the proof of \Cref{apx:rightshift_LPhi} relies on the assumption that $\Phi \notin \Delta_2^{\infty}$.
\end{remark}

Another known result for $\Lp^p$ spaces, which can be generalized to Orlicz spaces, is the Mankowski integral inequality. Although the proof is simple, the authors are not aware that such a result already exists for Orlicz spaces.

\begin{proposition}[generalized Minkowski inequality]\label{apx:Minkowski}
Let $\Phi$ be a Young function such that $\Psi(t)=\Phi(t^{\nicefrac{1}{r}})$ also defines a Young function, where $r\geq 1$. Further let $(\Omega_i , \mathcal{F}_i, \mu_i)$, $i=1,2$, be measure spaces and let $f:\Omega_1 \times \Omega_2 \rightarrow [0,\infty)$ be measurable. Then it holds that
\begin{equation*}
\left\lVert \left( \int_{\Omega_2} (f(\cdot,y))^r \d \mu_2(y) \right)^{\nicefrac{1}{r}} \right\rVert_{\Lp_\Phi(\Omega_1)} \leq 2^{\nicefrac{1}{r}} \left( \int_{\Omega_2} \lVert f(\cdot,y) \rVert_{\Lp_\Phi(\Omega_1)}^r \d \mu_2(y) \right)^{\nicefrac{1}{r}}.
\end{equation*}
The factor $2^{\nicefrac{1}{r}}$ can be omitted if we take the equivalent Orlicz norm.
\end{proposition}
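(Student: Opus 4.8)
The plan is to linearize the $\Lp^r$-type structure of the inner integral by passing to the Young function $\Psi(t)=\Phi(t^{\nicefrac{1}{r}})$, which is a Young function precisely by the hypothesis, and thereby reduce the assertion to the \emph{ordinary} Minkowski integral inequality in the Orlicz space $\Lp_\Psi(\Omega_1)$, which in turn follows from a one-line duality argument.

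First I would record the elementary norm identity: for every nonnegative measurable $h$ on $\Omega_1$ one has $\lVert h\rVert_{\Lp_\Phi(\Omega_1)}^r=\lVert h^r\rVert_{\Lp_\Psi(\Omega_1)}$. This is immediate from the definition of the Luxemburg norm, since $\Psi(h^r/k^r)=\Phi((h^r/k^r)^{\nicefrac{1}{r}})=\Phi(h/k)$ pointwise, so that $\int_{\Omega_1}\Phi(h/k)\d\mu_1\le 1$ if and only if $\int_{\Omega_1}\Psi(h^r/k^r)\d\mu_1\le 1$; substituting $m=k^r$ in the two defining infima gives the identity. Applying it to $h=F:=\left(\int_{\Omega_2}f(\cdot,y)^r\d\mu_2\right)^{\nicefrac{1}{r}}$ and to $h=f(\cdot,y)$ transforms the asserted inequality into $\lVert\int_{\Omega_2}G(\cdot,y)\d\mu_2\rVert_{\Lp_\Psi(\Omega_1)}\le 2\int_{\Omega_2}\lVert G(\cdot,y)\rVert_{\Lp_\Psi(\Omega_1)}\d\mu_2$ for the nonnegative integrand $G:=f^r$, i.e.\ exactly the ordinary Minkowski integral inequality in $\Lp_\Psi$, with the constant $2$ to be accounted for below.

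Next I would prove this ordinary Orlicz--Minkowski inequality by duality, using the Orlicz norm $\interleave\cdot\interleave_{\Lp_\Psi}$ from \Cref{sec:Orlicz_spaces}. For $w\ge 0$ with $\lVert w\rVert_{\Lp_{\widetilde{\Psi}}(\Omega_1)}\le 1$, Tonelli's theorem (legitimate since everything is nonnegative, so no integrability hypothesis is needed) gives $\int_{\Omega_1}\bigl(\int_{\Omega_2}G(x,y)\d\mu_2\bigr)w(x)\d\mu_1=\int_{\Omega_2}\bigl(\int_{\Omega_1}G(x,y)w(x)\d\mu_1\bigr)\d\mu_2$; bounding the inner $\Omega_1$-integral by $\interleave G(\cdot,y)\interleave_{\Lp_\Psi}\lVert w\rVert_{\Lp_{\widetilde{\Psi}}}\le\interleave G(\cdot,y)\interleave_{\Lp_\Psi}$ and taking the supremum over such $w$ yields $\interleave\int_{\Omega_2}G(\cdot,y)\d\mu_2\interleave_{\Lp_\Psi}\le\int_{\Omega_2}\interleave G(\cdot,y)\interleave_{\Lp_\Psi}\d\mu_2$ with constant $1$. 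Finally I would convert back to the Luxemburg norm via \eqref{eq:eqivalent_norms_Luxemburg_Orlicz}: applying its left-hand inequality on the left side and its right-hand inequality to the integrand on the right introduces a single factor $2$, and taking $r$-th roots produces the stated $2^{\nicefrac{1}{r}}$. Since this factor originates solely from replacing the Orlicz norm by the Luxemburg norm in the Minkowski step, it disappears if the Orlicz norm is retained throughout, which is the final claim of the proposition.

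The argument is short and essentially bookkeeping, so I do not expect a genuine obstacle; the two points requiring care are that the substitution $\Psi(t)=\Phi(t^{\nicefrac{1}{r}})$ indeed yields the clean identity $\lVert h\rVert_{\Lp_\Phi}^r=\lVert h^r\rVert_{\Lp_\Psi}$ (this is exactly where the hypothesis that $\Psi$ is a Young function enters, ensuring $\Lp_\Psi$ and its norm are well defined), and the interchange of integration in the duality step, which is harmless because of nonnegativity. The only conceptual step is recognizing the $\Psi$-substitution that reduces the power-weighted inequality to the standard, norm-inside Minkowski inequality in a single Orlicz space.
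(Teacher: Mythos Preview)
Your proof is correct and follows essentially the same route as the paper: both arguments first establish the Luxemburg-norm identity $\lVert h\rVert_{\Lp_\Phi}^r=\lVert h^r\rVert_{\Lp_\Psi}$ to reduce to the case $r=1$ in $\Lp_\Psi$, and then prove that case by the duality/Orlicz-norm representation together with Tonelli, picking up the factor $2$ from the equivalence \eqref{eq:eqivalent_norms_Luxemburg_Orlicz} (the paper phrases the last step as an application of the generalized H\"older inequality, which is the same thing). The only cosmetic difference is the order of presentation and that you track the constant $2$ slightly more cleanly by working in the Orlicz norm throughout and converting at the end.
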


\begin{proof}
First we prove the statement for $r=1$. Note that $\Psi$ is trivially a Young function in this case. Using the equivalent Orlicz norm on $\Lp_\Phi$ we obtain that
\begin{align*}
\left \lVert \int_{\Omega_2} f(\cdot,y) \d \mu_2(y) \right \rVert_{\Lp_\Phi(\Omega_1)} 
&\leq \sup_{\lVert g \rVert_{\Lp_{\widetilde{\Phi}}(\Omega_1)} \leq 1} \left\lvert \int_{\Omega_1} \int_{\Omega_2} f(x,y)~g(x) \d \mu_2(y) \d \mu_1(x) \right\rvert \\
&= \sup_{\lVert g \rVert_{\Lp_{\widetilde{\Phi}}(\Omega_1)} \leq 1} \left\lvert \int_{\Omega_2} \int_{\Omega_1} f(x,y)~g(x) \d \mu_1(x) \d \mu_2(y)\right\rvert \\
&\leq \int_{\Omega_2} \sup _{\lVert g \rVert_{\Lp_{\widetilde{\Phi}}(\Omega_1)} \leq 1} \left\lvert \int_{\Omega_1} f(x,y)~ g(x) \d \mu_1(x) \right \rvert \d \mu_2(y)\\
&\leq 2 \int_{\Omega_2} \lVert f(\cdot,y) \rVert_{\Lp_\Phi(\Omega_1)} \d \mu_2(y),
\end{align*}
where we applied the generalized H\"older inequality in the last step.\\
Now, let $r \geq 1$ be given such that $\Psi(t)=\Phi(t^{\nicefrac{1}{r}})$ defines a Young function. We deduce from the definition of the Luxemburg norm that
\begin{align*}
\left\lVert \left( \int_{\Omega_2} (f(\cdot,y))^r \d \mu_2(y) \right)^{\nicefrac{1}{r}} \right\rVert_{\Lp_\Phi(\Omega_1)}
&= \left\lVert  \int_{\Omega_2} (f(\cdot,y))^r \d \mu_2(y)  \right\rVert_{\Lp_\Psi(\Omega_1)}^{\nicefrac{1}{r}}\\
&\leq 2^{\nicefrac{1}{r}} \left(\int_{\Omega_2} \lVert (f(\cdot,y))^r \rVert_{\Lp_\Psi(\Omega_1)} \d \mu_2(y)\right)^{\nicefrac{1}{r}}\\
&=2^{\nicefrac{1}{r}} \left(\int_{\Omega_2} \lVert f(\cdot,y) \rVert_{\Lp_\Phi(\Omega_1)}^r \d \mu_2(y) \right)^{\nicefrac{1}{r}},
\end{align*}
where we applied the previous derived estimate for $r=1$ and the Young function $\Psi$.
\end{proof}


\def\cprime{$'$}

\end{document}